\newcommand{\myfnsymbol}[1]{%
  \expandafter\@myfnsymbol\csname c@#1\endcsname
}
\newcommand{\@myfnsymbol}[1]{%
  \ifcase #1
  \or 
  \or 1
  \or $*$
  \or $\dagger$
  \or $\ddagger$
  \fi
}
\newcommand{\spmsntu}{\@myfnsymbol{2}}
\newcommand{\emailA}{\@myfnsymbol{3}}
\newcommand{\emailB}{\@myfnsymbol{4}}
\newcommand{\emailC}{\@myfnsymbol{5}}
\newcommand*{\transpose}{%
  {\mathpalette\@transpose{}}%
}
\newcommand*{\@transpose}[2]{%
  \raisebox{\depth}{$\m@th#1\intercal$}%
}
\newtheorem{thm}{Theorem}[section]
\newtheorem{conj}[thm]{Conjecture}
\newtheorem{cor}[thm]{Corollary}
\newtheorem{lem}[thm]{Lemma}
\newtheorem{prop}[thm]{Proposition}
\theoremstyle{definition}
\newtheorem{dfn}[thm]{Definition}
\newtheorem{ex}[thm]{Example}
\newtheorem{qn}[thm]{Question}
\DeclareRobustCommand{\stirling}{\genfrac\{\}{0pt}{}}
\DeclareRobustCommand{\veeplus}{\mathbin{\mathpalette\veeplus@@\relax}}
\newcommand{\veeplus@@}[2]{%
  \vbox{\offinterlineskip
    \sbox\z@{$\m@th#1\vee$}%
    \ialign{%
      \hfil##\hfil\cr
      $\m@th#1{}_{+}\kern-\scriptspace$\cr
      \noalign{\kern-.\ht\z@}
      \box\z@\cr
    }%
  }%
}
\DeclareRobustCommand{\veeminus}{\mathbin{\mathpalette\veeminus@@\relax}}
\newcommand{\veeminus@@}[2]{%
  \vbox{\offinterlineskip
    \sbox\z@{$\m@th#1\vee$}%
    \ialign{%
      \hfil##\hfil\cr
      $\m@th#1{}_{-}\kern-\scriptspace$\cr
      \noalign{\kern-.\ht\z@}
      \box\z@\cr
    }%
  }%
}
   \def\MR#1{}
\title{Chromatic polynomials of signed graphs and \\
dominating-vertex deletion formulae}
\author{Gary R.W. Greaves \and Jeven Syatriadi \and Charissa I. Utomo}
\date{}
\begin{document}

\renewcommand{\thefootnote}{\myfnsymbol{footnote}}

\maketitle

\footnotetext[1]{Division of Mathematical Sciences, School of Physical and Mathematical Sciences, Nanyang Technological University, 21 Nanyang Link, Singapore 637371.}%
\footnotetext[1]{\textit{Email addresses}: \texttt{gary@ntu.edu.sg} (G.R.W. Greaves), \texttt{jeve0002@e.ntu.edu.sg} (J. Syatriadi), \texttt{cutomo001@e.ntu.edu.sg} (C.I. Utomo).}%

\setcounter{footnote}{0}
\renewcommand{\thefootnote}{\arabic{footnote}}

\begin{abstract}
    We exhibit non-switching-isomorphic signed graphs that share a common underlying graph and common chromatic polynomials, thereby answering a question posed by Zaslavsky.
    For various joins of all-positive or all-negative signed complete graphs, we derive a closed-form expression for their chromatic polynomials.
    As a generalisation of the chromatic polynomials for a signed graph, we introduce a new pair of bivariate chromatic polynomials.
    We establish recursive dominating-vertex deletion formulae for these bivariate chromatic polynomials.
    Finally, we show that for certain families of signed threshold graphs, isomorphism is equivalent to the equality of bivariate chromatic polynomials.
\end{abstract}

\section{Introduction}
\label{sec:intro}

\subsection{Background}

The chromatic polynomial of a graph is considered to be one of the most important polynomials in graph theory, with its origins stemming from the Four-Colour Theorem.
See the expository paper of Read~\cite{Read68} for an introduction.
As with many other notions for graphs, the chromatic polynomial, which counts the number of ways one can properly colour a graph with $\lambda$ colours, has been generalised to signed graphs.
We refer to the survey of Steffen and Vogel~\cite{SV21} for the history of signed graph colouring. 

A \textbf{signed graph} $\Sigma = (\Gamma,\sigma)$ is a graph $\Gamma = \left( V(\Gamma), E(\Gamma) \right)$ equipped with \textbf{signature} function $\sigma : E(\Gamma) \to \{\pm 1\}$, which assigns each edge of $\Gamma$ a positive or negative sign.
An edge $e \in E(\Gamma)$ is called a \textbf{positive edge} if $\sigma(e) = 1$ and a \textbf{negative edge} if $\sigma(e) = -1$.
In our figures below, solid edges represent positive edges while dashed edges represent negative edges.
The graph $\Gamma$ is referred to as the \textbf{underlying graph} of $\Sigma$ and denoted by $\left\vert \Sigma \right\vert$.
Throughout this paper, the underlying graphs are \emph{finite simple graphs}.

Zaslavsky~\cite{ZasCol} introduced the notion of \textit{colouring} for signed graphs as follows: for a fixed nonnegative integer $k$, a function $\kappa : V(\Gamma) \to \{-k,\dots,-1,0,1,\dots,k\}$ such that $\kappa(v) \ne \sigma(\{v,w\})\kappa(w)$ for all edges $\{v,w\} \in E(\Gamma)$, is called a \textbf{proper colouring} of $\Sigma$ in $2k+1$ signed colours; and a function $\kappa : V(\Gamma) \to \{-k,\dots,-1\}\cup\{1,\dots,k\}$ such that $\kappa(v) \ne \sigma(\{v,w\})\kappa(w)$ for all edges $\{v,w\} \in E(\Gamma)$, is called a \textbf{proper zero-free colouring} of $\Sigma$ in $2k$ signed colours.
If $\lambda=2k+1$ for some nonnegative integer $k$, then the number of proper colourings of $\Sigma$ in $\lambda$ signed colours is equal to the value of a polynomial, which we denote by $\mathsf O(\Sigma, x)$, when evaluated at $\lambda$.
Likewise, if $\lambda=2k$ for some nonnegative integer $k$, then the number of proper zero-free colourings of $\Sigma$ in $\lambda$ signed colours is equal to the value of a polynomial, which we denote by $\mathsf E(\Sigma, x)$, when evaluated at $\lambda$.
We refer to these two polynomials together as the \emph{chromatic polynomials} of $\Sigma$.
These polynomials can be realised as the constituents of a certain Ehrhart quasipolynomial~\cite[Theorem 5.6]{BZ06} and thus, they are usually referred to as a pair by the term \emph{chromatic quasipolynomial}.

Zaslavsky~\cite{ZasSign} showed that, up to switching isomorphism, there are precisely six signed graphs whose underlying graph is the Petersen graph.
Beck et al.~\cite{Involve} computed the chromatic polynomials of each of these six signed graphs.
Ren et al.~\cite{RQHZ22} studied the polynomial $\mathsf O(\Sigma, x)$ for some families of signed graphs.
Beck and Hardin~\cite{BH15} generalised the chromatic polynomial of a signed graph to a bivariate polynomial analogously to the Dohmen-P\" onitz-Tittmann bivariate chromatic polynomial of an (unsigned) graph \cite{DPT03}.

\subsection{Main results and organisation}

Our first contribution is to answer the following question due to Zaslavsky~\cite[Question 9.1]{Zas12}.
We will define the notion of isomorphism and switching for signed graphs in Section~\ref{sec:signedchrom}.

\begin{qn}
\label{qn:zas}
    Do there exist non-switching-isomorphic signed graphs $\Sigma_1$ and $\Sigma_2$ such that $\left\vert \Sigma_1 \right\vert = \left\vert \Sigma_2 \right\vert$ and $\mathsf E(\Sigma_1, x) = \mathsf E(\Sigma_2, x)$ or $\mathsf O(\Sigma_1, x) = \mathsf O(\Sigma_2, x)$?
\end{qn}

Zaslavsky~\cite{Zas12} showed that the answer to Question~\ref{qn:zas} is no if the shared underlying graph is $2$-regular.
In Section~\ref{sec:signedchrom}, we answer Question~\ref{qn:zas} in the affirmative and provide an example of signed graphs $\Sigma_1$ and $\Sigma_2$ such that $\left\vert \Sigma_1 \right\vert = \left\vert \Sigma_2 \right\vert$, $\mathsf E(\Sigma_1, x) = \mathsf E(\Sigma_2, x)$, and $\mathsf O(\Sigma_1, x) = \mathsf O(\Sigma_2, x)$.
We also prove that for any signed graph $\Sigma$, if $\mathsf E(\Sigma, x) = \mathsf O(\Sigma, x)$ then $\Sigma$ must be balanced.

Beck et al.~\cite{Involve} computed the chromatic polynomials of signed graphs whose underlying graphs are the complete graphs or the Petersen graph.
However, we find some inaccuracies in their computations of $\mathsf E(\Sigma, x)$, which we rectify in Table~\ref{tab:involvePetersen} and Table~\ref{tab:involvecomplete}.

In \cite[Section 8.9]{ZasChromInv}, Zaslavsky exhibited a closed formula for the chromatic polynomials of $-K_n$.
As an extension of his work, in Section~\ref{sec:signedjoin}, we exhibit closed-form expressions for the chromatic polynomials of various joins of all-positive or all-negative signed complete graphs.

It is well known that the chromatic polynomial of an (unsigned) graph satisfies a recursive edge deletion-contraction rule.
This rule also applies to signed graphs, taking into account parallel edges and loops (see \cite[Theorem 2.3]{ZasCol} or \cite[Proposition 2]{Categ22}).
In Section~\ref{sec:bivar}, we derive recursive formulae for the deletion of a positive dominating vertex or negative dominating vertex of a signed graph.
The recursive dominating-vertex deletion formulae require us to define the \emph{bivariate chromatic polynomials} of signed graphs, which is a generalisation of the chromatic polynomials of signed graphs.
Compared with the chromatic polynomials of signed graphs, the bivariate chromatic polynomials are not a switching invariant but they are a stronger isomorphism invariant.
We point out that the definition of the bivariate chromatic polynomials in this paper differs from that of Beck and Hardin~\cite{BH15}.
Lastly, we show that for a certain family of \emph{signed threshold graphs}, each isomorphism class can be uniquely represented by the corresponding bivariate chromatic polynomials.

\subsection{Notation and preliminaries}

Let $\Sigma_1 = (\Gamma_1,\sigma_1)$ and $\Sigma_2 = (\Gamma_2,\sigma_2)$ be signed graphs.
Then $\Sigma_1$ and $\Sigma_2$ are \textbf{isomorphic}, denoted by $\Sigma_1 \cong \Sigma_2$, if there exists a graph isomorphism $\varphi : V(\Gamma_1) \to V(\Gamma_2)$ such that for all edges $e=\{v,w\} \in E(\Gamma_1)$, we have $\sigma_1(e) = \sigma_2 (e^{\varphi})$ where $e^{\varphi} = \{ \varphi(v), \varphi(w) \} \in E(\Gamma_2)$.
In other words, two signed graphs are isomorphic if there exists a graph isomorphism between them that preserves the signs of the edges.

Next, we define the notion of \textbf{switching}.
Let $\Sigma = (\Gamma, \sigma)$ be a signed graph and let $X \subseteq V(\Gamma)$.
Switching $X$ means inverting the sign of all edges that are incident with precisely one vertex in $X$.
If $X=\{v\}$ then switching $v$ inverts the signs of all edges incident with $v$.
Two signed graphs $\Sigma_1 = (\Gamma,\sigma_1)$ and $\Sigma_2 = (\Gamma,\sigma_2)$ are \textbf{switching equivalent}, denoted by $\Sigma_1 \sim \Sigma_2$, if there exists $X \subseteq V(\Gamma)$ such that one can be obtained from the other by switching $X$.

Lastly, two signed graphs $\Sigma_1$ and $\Sigma_2$ are \textbf{switching isomorphic}, denoted by $\Sigma_1 \simeq \Sigma_2$, if there exists a signed graph $\Sigma$ such that $\Sigma_1 \cong \Sigma$ and $\Sigma \sim \Sigma_2$.
In other words, two signed graphs $\Sigma_1$ and $\Sigma_2$ are switching isomorphic if $\Sigma_1$ is isomorphic to a switching of $\Sigma_2$.
Clearly, if $\Sigma_1 \cong \Sigma_2$ or $\Sigma_1 \sim \Sigma_2$, then $\Sigma_1 \simeq \Sigma_2$.

Each relation $\cong$, $\sim$, and $\simeq$ is an equivalence relation and we call the corresponding equivalence classes \textbf{isomorphism classes}, \textbf{switching equivalence classes}, and \textbf{switching isomorphism classes}, respectively.
The isomorphism classes and the switching isomorphism classes can be represented geometrically by unlabeled signed graphs.
If $\Sigma$ is an unlabeled signed graph, then $\left\vert \Sigma \right\vert$ denotes the unlabeled underlying graph of $\Sigma$.
Moreover, it should be clear from the context whether we work with specific signed graphs or with the isomorphism classes of signed graphs instead.

Let $\Sigma = (\Gamma, \sigma)$ be a signed graph.
We call $\Sigma$ \textbf{all-positive} if $\sigma(e) = 1$ for all $e \in E(\Gamma)$ and \textbf{all-negative} if $\sigma(e) = -1$ for all $e \in E(\Gamma)$.
Let $+\Sigma$ denote the all-positive signed graph with underlying graph $\Gamma$, and let $-\Sigma$ denote the all-negative signed graph with underlying graph $\Gamma$.
Let $\Sigma^+$ denote the all-positive signed graph where $V(\left\vert \Sigma^+ \right\vert) = V(\Gamma)$ and $E(\left\vert \Sigma^+ \right\vert) = \sigma^{-1}(1)$.
Similarly, let $\Sigma^-$ denote the all-negative signed graph where $V(\left\vert \Sigma^- \right\vert) = V(\Gamma)$ and $E(\left\vert \Sigma^- \right\vert) = \sigma^{-1}(-1)$.
A \textbf{connected component} $\Lambda$ of $\Sigma$ is a signed graph where $\left\vert \Lambda \right\vert$ is a connected component of $\Gamma$ and the signature is $\sigma$ restricted to the edges of $\left\vert \Lambda \right\vert$.
For $Y \subseteq E(\Gamma)$, we denote by $\Sigma \vert Y$ the signed spanning subgraph $\left( (V(\Gamma),Y), \sigma \vert_Y \right)$, where $\sigma \vert_Y$ denotes the restriction of $\sigma$ to $Y$.

A \textbf{signed complete graph} $\Sigma$ is a signed graph such that $\left\vert \Sigma \right\vert$ is an (unsigned) complete graph.
Let $n \geqslant 0$ be an integer.
We denote by $+K_n$ (resp.\ $-K_n$) the all-positive (resp.\ all-negative) signed complete graph of order $n$.
The vertexless and edgeless signed graph is denoted by $K_0$ and the edgeless signed graph with a single vertex is denoted by $K_1$.
Hence, $K_0 = +K_0 = -K_0$ and $K_1 = +K_1 = -K_1$.

A \textbf{cycle} of $\Sigma$ is a connected $2$-regular subgraph $\mathcal C$ of $|\Sigma|$ together with $\sigma$ restricted to the edges of $\mathcal C$.
We call a cycle of $\Sigma$ a \textbf{negative cycle} if it has an odd number of negative edges.
We call $\Sigma$ \textbf{unbalanced} if it contains a negative cycle and \textbf{balanced} otherwise.
We have that $\Sigma$ is balanced if and only if $+\Sigma \sim \Sigma$, see \cite[Corollary 1.3]{SV21} or \cite[Corollary 3.3]{ZasSign}.

\section{Chromatic polynomials of a signed graph}
\label{sec:signedchrom}

Let $\Gamma = \left( V(\Gamma), E(\Gamma) \right)$ be an unsigned graph.
Let $\lambda \geqslant 0$ be an integer and let $C$ be a finite set of size $\lambda$.
Define $f(\Gamma, \lambda)$ to be the number of functions $\kappa : V(\Gamma) \to C$ such that $\kappa(v) \neq \kappa(w)$ for all edges $\{v, w\} \in E(\Gamma)$.
We remark that $f(\Gamma, \lambda)$ depends only on $\Gamma$ and $\lambda$; it does not depend on the choice of $C$.
Then there exists a unique polynomial $\chi(\Gamma, x) \in \mathbb{Z}[x]$ such that for all integers $\lambda \geqslant 0$, we have $f(\Gamma, \lambda) = \chi(\Gamma, \lambda)$.
We call $\chi(\Gamma, x)$ the \textbf{chromatic polynomial} of $\Gamma$ \cite{Read68}.

Generalising the chromatic polynomial to signed graphs, Zaslavsky~\cite{ZasCol} introduced vertex-colouring for signed graphs.
In this paper, we adopt a slight modification of the signed graph colouring by Zaslavsky.
Our modification allows us to simplify some proofs in this paper and to accommodate a generalisation of signed graph colouring, which we introduce in Section~\ref{sec:bivar}.

\begin{dfn}
\label{dfn:properCcolouring}
    Let $\Sigma = (\Gamma, \sigma)$ be a signed graph and let $C$ be a finite subset of $\mathbb{Z}$.
    A \textbf{proper $C$-colouring} of $\Sigma$ is a function $\kappa : V(\Gamma) \to C$ such that $\kappa(v) \neq \sigma(\{v,w\}) \kappa(w)$ for all edges $\{v,w\} \in E(\Gamma)$.
\end{dfn}

Let $\lambda \geqslant 0$ be an integer and let $\mathbb{Z}^*$ be the set of nonzero integers.
We define 
\[
C_\lambda \coloneqq \begin{cases}
    \displaystyle \mathbb{Z}^* \cap \left[-\frac{\lambda}{2}, \frac{\lambda}{2} \right], & \text{ if $\lambda$ is even}; \\
    \displaystyle \mathbb{Z} \cap \left[-\frac{\lambda-1}{2}, \frac{\lambda-1}{2} \right], & \text{ if $\lambda$ is odd}.
\end{cases}
\]

In either case, note that $\left\vert C_{\lambda} \right\vert = \lambda$ and we call $C_{\lambda}$ the \textbf{$\lambda$-colour set}.
The definition of $C_{\lambda}$ here is the same as the definition of the colour set in \cite{ZasCol} and the set $M_n$ in \cite[p.\ 2]{MRS16}.

\begin{dfn}
\label{dfn:signedchrom}
    Let $\Sigma$ be a signed graph.
    Let $\lambda \geqslant 0$ be an integer and let $C_{\lambda}$ be the $\lambda$-colour set.
    Define $f(\Sigma, \lambda)$ to be the number of proper $C_{\lambda}$-colourings of $\Sigma$.
\end{dfn}

Denote by $c(\Sigma)$ the number of connected components of $\Sigma$ and $b(\Sigma)$ the number of balanced connected components of $\Sigma$.
We can determine $f(\Sigma, \lambda)$ by using Theorem~\ref{thm:iechrom} below.
As is standard, we define $0^0 = 1$.

\begin{thm}[{\cite[Propositions 4 and 5]{Categ22}}]
\label{thm:iechrom}
    Let $\Sigma = (\Gamma,\sigma)$ be a signed graph and let $\lambda \geqslant 0$ be an integer.
    Then
        \begin{equation*}
    f(\Sigma, \lambda) = 
        \displaystyle \sum_{i=0}^{|E(\Gamma)|} (-1)^i \sum_{Y \subseteq E(\Gamma), \, |Y|=i} \lambda^{b(\Sigma \vert Y)} \cdot \delta^{c(\Sigma \vert Y)-b(\Sigma \vert Y)},
    \end{equation*}
    where $\delta \in \{0,1\}$ is congruent to $\lambda$ modulo $2$.
\end{thm}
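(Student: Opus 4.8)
The plan is to establish the formula by inclusion–exclusion over the edge set $E(\Gamma)$. Work inside the universe $U$ of all functions $\kappa\colon V(\Gamma)\to C_\lambda$, and for each edge $e=\{v,w\}$ let $B_e\subseteq U$ be the set of those $\kappa$ with $\kappa(v)=\sigma(e)\kappa(w)$; a proper $C_\lambda$-colouring of $\Sigma$ is precisely an element of $U$ lying in none of the $B_e$. By inclusion–exclusion,
\[
f(\Sigma,\lambda)=\sum_{Y\subseteq E(\Gamma)}(-1)^{|Y|}\,N(Y),\qquad N(Y)\coloneqq\Bigl|\,\bigcap_{e\in Y}B_e\,\Bigr|,
\]
and collecting the terms with $|Y|=i$ reproduces the double sum in the statement. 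It therefore suffices to show $N(Y)=\lambda^{b(\Sigma\vert Y)}\cdot\delta^{c(\Sigma\vert Y)-b(\Sigma\vert Y)}$ for every $Y\subseteq E(\Gamma)$.

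To compute $N(Y)$, note that the constraints defining $\bigcap_{e\in Y}B_e$ only relate vertices lying in a common connected component of the signed spanning subgraph $\Sigma\vert Y$, so $N(Y)=\prod_\Lambda n(\Lambda)$, where $\Lambda$ ranges over the components of $\Sigma\vert Y$ and $n(\Lambda)$ counts the functions $\kappa\colon V(\vert\Lambda\vert)\to C_\lambda$ satisfying $\kappa(v)=\sigma(e)\kappa(w)$ for every edge $e=\{v,w\}$ of $\Lambda$. Now split into two cases. If $\Lambda$ is balanced, then $\Lambda\sim +\Lambda$, so there is a switching function $\theta\colon V(\vert\Lambda\vert)\to\{\pm1\}$ with $\sigma(\{v,w\})=\theta(v)\theta(w)$ for every edge of $\Lambda$; substituting $\mu(v)\coloneqq\theta(v)\kappa(v)$ turns the constraints into the requirement that $\mu$ be constant on the connected graph $\vert\Lambda\vert$, and since $C_\lambda$ is symmetric under negation the map $\kappa\mapsto\mu$ is a bijection from the set of functions $V(\vert\Lambda\vert)\to C_\lambda$ to itself, whence $n(\Lambda)=|C_\lambda|=\lambda$ (in particular an isolated vertex contributes $\lambda$). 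If $\Lambda$ is unbalanced, then propagating the constraints around a negative cycle through a vertex $v$ — whose edge-signs multiply to $-1$ — forces $\kappa(v)=-\kappa(v)$, hence $\kappa(v)=0$, and connectedness of $\vert\Lambda\vert$ propagates this to $\kappa\equiv0$ on $\Lambda$; conversely the all-zero function satisfies every constraint, so $n(\Lambda)=|\{0\}\cap C_\lambda|$, which is $1$ when $\lambda$ is odd and $0$ when $\lambda$ is even, i.e.\ $n(\Lambda)=\delta$. Multiplying over the $b(\Sigma\vert Y)$ balanced and the $c(\Sigma\vert Y)-b(\Sigma\vert Y)$ unbalanced components yields the asserted value of $N(Y)$.

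The argument is routine; the only points that need care are the evaluation of $n(\Lambda)$ and the boundary behaviour. One must observe that $C_\lambda=-C_\lambda$, so that propagating a colour along a balanced component never escapes $C_\lambda$, and that an unbalanced component admits only the all-zero constrained colouring, which belongs to $C_\lambda$ exactly when $\lambda$ is odd; and one must keep the convention $0^0=1$ in force so that the degenerate instances — $\lambda=0$, or $V(\Gamma)=\emptyset$, or a component that is a single vertex — are covered uniformly. Since a proper $C_\lambda$-colouring in the sense of Definition~\ref{dfn:properCcolouring} agrees with Zaslavsky's colouring on the colour set $C_\lambda$, the resulting identity is exactly that of \cite[Propositions 4 and 5]{Categ22}.
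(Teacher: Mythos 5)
Your proof is correct. The paper itself only cites this statement to \cite[Propositions 4 and 5]{Categ22} without reproving it, but your argument — inclusion--exclusion over edge subsets followed by a component-by-component count of the constrained colourings, with the balanced/unbalanced case split giving $\lambda$ (via the symmetry $C_\lambda=-C_\lambda$) and $\delta$ respectively — is exactly the method the paper uses to prove its bivariate generalisation, Theorem~\ref{thm:iechrombivar}, so it matches the paper's approach in all essentials.
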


By Theorem~\ref{thm:iechrom}, we conclude that there exist unique $\mathsf E(\Sigma, x)$, $\mathsf O(\Sigma, x) \in \mathbb{Z}[x]$ such that for all integers $\lambda \geqslant 0$, we have $f(\Sigma, \lambda) = \mathsf E(\Sigma, \lambda)$ if $\lambda$ is even, and $f(\Sigma, \lambda) = \mathsf O(\Sigma, \lambda)$ if $\lambda$ is odd.
We call $\mathsf E(\Sigma, x)$ the \textbf{even chromatic polynomial} of $\Sigma$ and $\mathsf O(\Sigma, x)$ the \textbf{odd chromatic polynomial} of $\Sigma$.
For any signed graph $\Sigma$, define $\mathscr C(\Sigma, x) \coloneqq \left( \mathsf E(\Sigma, x), \; \mathsf O(\Sigma, x) \right)$, which we refer to as the \textbf{chromatic polynomials} of $\Sigma$.
Given signed graphs $\Sigma_1$ and $\Sigma_2$, it follows that $\mathscr C(\Sigma_1, x) = \mathscr C(\Sigma_2, x)$ if and only if $\mathsf E(\Sigma_1, x) = \mathsf E(\Sigma_2, x)$ and $\mathsf O(\Sigma_1, x) = \mathsf O(\Sigma_2, x)$.
Equivalently, we also have that $\mathscr C(\Sigma_1, x) = \mathscr C(\Sigma_2, x)$ if and only if for all integers $\lambda \geqslant 0$, $f(\Sigma_1, \lambda) = f(\Sigma_2, \lambda)$.

For the readers' convenience, below we provide a short proof of a well-known fact that switching isomorphic signed graphs have identical chromatic polynomials.

\begin{cor}
\label{cor:switchisomchrom}
    Let $\Sigma_1$ and $\Sigma_2$ be signed graphs.
    If $\Sigma_1 \simeq \Sigma_2$ then $\mathscr C(\Sigma_1, x) = \mathscr C(\Sigma_2, x)$.
\end{cor}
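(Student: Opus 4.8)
The plan is to show that the counting function $f(\Sigma,\lambda)$ is itself a switching-isomorphism invariant; since it was noted above that $\mathscr C(\Sigma_1,x) = \mathscr C(\Sigma_2,x)$ if and only if $f(\Sigma_1,\lambda) = f(\Sigma_2,\lambda)$ for all integers $\lambda \geqslant 0$, this suffices. By the definition of switching isomorphism it is enough to treat the two cases $\Sigma_1 \cong \Sigma_2$ and $\Sigma_1 \sim \Sigma_2$ separately and then chain them.

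For the isomorphism case, if $\varphi : V(|\Sigma_1|) \to V(|\Sigma_2|)$ is a sign-preserving graph isomorphism, then $\kappa \mapsto \kappa \circ \varphi^{-1}$ is a bijection from the set of proper $C_\lambda$-colourings of $\Sigma_1$ onto the set of proper $C_\lambda$-colourings of $\Sigma_2$, because the defining inequalities $\kappa(v) \neq \sigma_1(\{v,w\})\kappa(w)$ are transported verbatim along $\varphi$.

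For the switching case, suppose $\Sigma_2$ is obtained from $\Sigma_1 = (\Gamma,\sigma_1)$ by switching a set $X \subseteq V(\Gamma)$. The key observation is that the colour set $C_\lambda$ is symmetric about $0$, hence closed under negation. For any proper $C_\lambda$-colouring $\kappa$ of $\Sigma_1$, define a new function $\kappa^X : V(\Gamma) \to C_\lambda$ by $\kappa^X(v) = -\kappa(v)$ if $v \in X$ and $\kappa^X(v) = \kappa(v)$ otherwise. Examining the three possibilities for an edge $\{v,w\}$ — both endpoints in $X$, both outside $X$, or exactly one inside $X$ — one checks that the properness inequality for $\kappa^X$ relative to $\sigma_2$ is in each case equivalent to the properness inequality for $\kappa$ relative to $\sigma_1$, using that $\sigma_2(\{v,w\}) = -\sigma_1(\{v,w\})$ precisely when exactly one endpoint of $\{v,w\}$ lies in $X$. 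Hence $\kappa \mapsto \kappa^X$ is a well-defined bijection (its own inverse, up to switching $X$ again) between the proper $C_\lambda$-colourings of $\Sigma_1$ and those of $\Sigma_2$, so $f(\Sigma_1,\lambda) = f(\Sigma_2,\lambda)$.

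Finally, for general $\Sigma_1 \simeq \Sigma_2$ there is, by definition, a signed graph $\Sigma$ with $\Sigma_1 \cong \Sigma$ and $\Sigma \sim \Sigma_2$; combining the two cases yields $f(\Sigma_1,\lambda) = f(\Sigma,\lambda) = f(\Sigma_2,\lambda)$ for every integer $\lambda \geqslant 0$, and therefore $\mathscr C(\Sigma_1,x) = \mathscr C(\Sigma_2,x)$. I do not expect a genuine obstacle here; the only point that requires care is verifying that $\kappa^X$ still takes values in $C_\lambda$, which is exactly why Zaslavsky's colour set is chosen symmetric about the origin — an asymmetric colour set would break the switching bijection.
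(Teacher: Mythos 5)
Your proof is correct, but it takes a genuinely different route from the paper's. The paper deduces the corollary from Theorem~\ref{thm:iechrom}: for the isomorphism step it notes that isomorphic signed graphs have identical sets of signed spanning subgraphs, and for the switching step it uses that $b(\Sigma\vert Y)=b(\Sigma_2\vert Y)$ for every $Y\subseteq E(\Gamma)$ because the number of balanced components is a switching invariant; the subset-expansion formula then forces $f(\Sigma_1,\lambda)=f(\Sigma_2,\lambda)$. You instead build explicit bijections on the colourings themselves --- composition with $\varphi^{-1}$ for isomorphism, and the sign-flip $\kappa^X$ on the switched set for switching --- and your case analysis of the edge $\{v,w\}$ relative to $X$ is exactly right, as is the observation that the involution $\kappa\mapsto\kappa^X$ lands in $C_\lambda$ only because $C_\lambda=-C_\lambda$. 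Your argument is more elementary and self-contained (it does not need Theorem~\ref{thm:iechrom} at all) and it isolates precisely where the symmetry of the colour set enters; your closing remark that an asymmetric colour set breaks the switching bijection is borne out later in the paper, where the bivariate polynomials built from $(\lambda,\mu)$-colour sets are shown \emph{not} to be switching invariants (e.g.\ $\mathscr B(+K_2,x,y)\neq\mathscr B(-K_2,x,y)$). What the paper's route buys in exchange is uniformity: having already established the subset expansion, the invariance is read off from a single structural fact about balanced components, and the same template is reused for Theorem~\ref{thm:iechrombivar}.
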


\begin{proof}
    Suppose that $\Sigma_1$ and $\Sigma_2$ are switching isomorphic.
    By definition, there exists a signed graph $\Sigma$ such that $\Sigma_1 \cong \Sigma$ and $\Sigma \sim \Sigma_2$.
    Two isomorphic signed graphs have identical set of signed spanning subgraphs.
    By Theorem~\ref{thm:iechrom}, we have $f(\Sigma_1, \lambda) = f(\Sigma, \lambda)$ for all integers $\lambda \geqslant 0$.
    Hence, we obtain $\mathscr C(\Sigma_1, x) = \mathscr C(\Sigma, x)$.
    
    Next, we have that $\Sigma = (\Gamma, \sigma)$ and $\Sigma_2 = (\Gamma, \sigma_2)$ are switching equivalent.
    Let $Y \subseteq E(\Gamma)$ and we also have that $\Sigma \vert Y$ and $\Sigma_2 \vert Y$ are switching equivalent.
    Since the number of balanced components does not change under switching, we have $b(\Sigma \vert Y) = b(\Sigma_2 \vert Y)$.
    By Theorem~\ref{thm:iechrom}, we have $f(\Sigma, \lambda) = f(\Sigma_2, \lambda)$ for all integers $\lambda \geqslant 0$.
    Therefore, we conclude that $\mathscr C(\Sigma_1, x) = \mathscr C(\Sigma, x) = \mathscr C(\Sigma_2, x)$.
\end{proof}

Let $\Sigma = (\Gamma, \sigma)$ be a signed graph.
By definition, we have $f(\Gamma, \lambda) = f(+\Sigma, \lambda)$ for all integers $\lambda \geqslant 0$.
This implies that $\chi(\Gamma, x) = \mathsf E(+\Sigma, x) = \mathsf O(+\Sigma, x)$.
Moreover, if $\Sigma$ is balanced, then $+\Sigma \sim \Sigma$, which further implies that $\mathscr C(\Sigma, x) = \mathscr C(+\Sigma, x) = \left( \chi(\Gamma, x), \; \chi(\Gamma, x) \right)$.
The following definition is related to the converse of Corollary~\ref{cor:switchisomchrom} above.

\begin{dfn}
\label{dfn:cochromatic}
    Let $\Sigma_1$ and $\Sigma_2$ be signed graphs.
    Then we call $\Sigma_1$ and $\Sigma_2$ \textbf{co-chromatic} if $\Sigma_1$ and $\Sigma_2$ are not switching isomorphic and $\mathscr C(\Sigma_1, x) = \mathscr C(\Sigma_2, x)$.
\end{dfn}

We will look at some examples of co-chromatic signed graphs.
Before that, first we prove the next theorem, which asserts that $\Sigma$ is balanced precisely when $\mathsf E(\Sigma, x) = \mathsf O(\Sigma, x)$.

\begin{thm}
\label{thm:polyiffbal}
    Let $\Sigma$ be a signed graph.
    Then $\Sigma$ is balanced if and only if $\mathsf E(\Sigma, x) = \mathsf O(\Sigma, x)$.
\end{thm}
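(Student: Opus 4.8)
The plan is to prove the two directions separately, using Theorem~\ref{thm:iechrom} as the main computational tool and the characterisation ``$\Sigma$ balanced $\iff$ every component is balanced $\iff b(\Sigma) = c(\Sigma)$''.

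For the forward direction, suppose $\Sigma$ is balanced. Then every signed spanning subgraph $\Sigma \vert Y$ is also balanced (a subgraph of a balanced graph contains no negative cycle), so $b(\Sigma \vert Y) = c(\Sigma \vert Y)$ for every $Y \subseteq E(\Gamma)$. In the formula of Theorem~\ref{thm:iechrom}, the exponent of $\delta$ is then always $c(\Sigma \vert Y) - b(\Sigma \vert Y) = 0$, so $\delta^{0} = 1$ regardless of the parity of $\lambda$, and we get $f(\Sigma, \lambda) = \sum_i (-1)^i \sum_{|Y| = i} \lambda^{b(\Sigma \vert Y)}$, an expression with no dependence on the parity of $\lambda$. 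Hence the same polynomial computes $f(\Sigma, \lambda)$ for all $\lambda \geqslant 0$, forcing $\mathsf E(\Sigma, x) = \mathsf O(\Sigma, x)$. (Alternatively, and even more quickly: if $\Sigma$ is balanced then $+\Sigma \sim \Sigma$, so by Corollary~\ref{cor:switchisomchrom} and the remark that $\mathscr C(+\Sigma, x) = (\chi(\Gamma, x), \chi(\Gamma, x))$ we get $\mathsf E(\Sigma, x) = \chi(\Gamma, x) = \mathsf O(\Sigma, x)$.)

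For the converse, I will prove the contrapositive: if $\Sigma$ is unbalanced, then $\mathsf E(\Sigma, x) \neq \mathsf O(\Sigma, x)$. It suffices to exhibit a single even integer $\lambda$ and a single odd integer $\lambda'$, or more cleanly to compare the two polynomials at a convenient evaluation, showing they disagree. The cleanest route is to evaluate at $\lambda = 1$: then $C_1 = \{0\}$, $\delta = 1$, and the only candidate colouring is the all-zero map $\kappa \equiv 0$; this is proper iff $\Sigma$ has no edge with $\kappa(v) \ne \sigma(e)\kappa(w)$ violated, but $0 = \sigma(e)\cdot 0$ always, so $\kappa \equiv 0$ is never proper when $E(\Gamma) \neq \emptyset$, giving $\mathsf O(\Sigma, 1) = 0$. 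Meanwhile at $\lambda = 2$ we have $C_2 = \{-1, 1\}$ and $\delta = 0$; by Theorem~\ref{thm:iechrom} only those $Y$ with $c(\Sigma \vert Y) = b(\Sigma \vert Y)$ (i.e.\ $\Sigma \vert Y$ balanced) contribute, and a short argument (counting proper $\{\pm 1\}$-colourings directly, or noting that balanced-on-$Y$ forces a $2$-colouring structure) shows $\mathsf E(\Sigma, 2)$ equals $2^{c(\Sigma)}$ times the number of ways to $2$-colour respecting signs on a balanced graph — in particular, for an unbalanced $\Sigma$ one checks $\mathsf E(\Sigma, 2) = 0$ as well, so this evaluation alone does not separate them. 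The honest obstacle is thus to find an evaluation that genuinely distinguishes $\mathsf E$ from $\mathsf O$ for unbalanced $\Sigma$.

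The resolution: compare leading or low-order behaviour rather than a single small value. Concretely, I expect the cleanest argument compares the coefficients, or equivalently uses the fact that $\mathsf E(\Sigma, x)$ and $\mathsf O(\Sigma, x)$ agree as polynomials iff they agree at all sufficiently large integers of a single parity class after noting both are determined by the same $2|E(\Gamma)|+1$ values — the subtlety being that $\mathsf E$ is pinned down by even $\lambda$ and $\mathsf O$ by odd $\lambda$. Instead I would isolate the term in Theorem~\ref{thm:iechrom} where $\delta$ appears with exponent exactly $1$: pick $Y$ to be the edge set of a single shortest negative cycle $C$ in $\Sigma$ (which exists since $\Sigma$ is unbalanced), so that $\Sigma \vert Y$ has one unbalanced component (the cycle) and $|V(\Gamma)| - |V(C)|$ isolated balanced vertices, whence $c(\Sigma \vert Y) - b(\Sigma \vert Y) = 1$. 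Grouping the formula as $f(\Sigma, \lambda) = A(\lambda) + \delta \cdot B(\lambda) + \delta^2(\cdots)$ where $A$ collects the balanced-subgraph terms and $B$ collects the exponent-one terms, and observing $\delta^j = \delta$ for all $j \geqslant 1$, we get $\mathsf O(\Sigma, x) - \mathsf E(\Sigma, x)$ equals the polynomial $\sum_{i} (-1)^i \sum_{|Y|=i,\ c(\Sigma\vert Y) > b(\Sigma\vert Y)} x^{b(\Sigma \vert Y)}$, i.e.\ the ``unbalanced-subgraph'' part of the sum. The main step is then to show this polynomial is nonzero, which follows by looking at its term of highest degree: among all $Y$ with $\Sigma \vert Y$ unbalanced, those maximising $b(\Sigma \vert Y)$ are exactly the single negative cycles together with a spanning forest on the remaining vertices, and these cannot cancel since they all carry the same sign $(-1)^{|Y|}$ pattern dictated by a matroid/inclusion–exclusion positivity argument analogous to the one showing the ordinary chromatic polynomial has nonzero coefficients. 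I expect this last non-vanishing claim — ruling out cancellation among the top-degree unbalanced terms — to be the genuine crux of the proof, and I would handle it either by a direct sign analysis on the frame matroid of $\Sigma$ or by exhibiting an explicit evaluation (e.g.\ a large odd versus large even $\lambda$ asymptotic) where the difference provably does not vanish.
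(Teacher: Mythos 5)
Your forward direction is correct and is the same as the paper's (either of your two arguments works). For the converse you have correctly reduced the problem to showing that
$\mathsf O(\Sigma,x)-\mathsf E(\Sigma,x) = \sum_{i}(-1)^i\sum_{|Y|=i,\; c(\Sigma\vert Y)>b(\Sigma\vert Y)} x^{b(\Sigma\vert Y)}$
is a nonzero polynomial, which is exactly the paper's strategy, but you stop at precisely the point you yourself flag as ``the genuine crux'': ruling out cancellation in the top-degree coefficient. The missing idea is to let $d$ be the \emph{minimum} length of a negative cycle of $\Sigma$. Any $Y$ with $\Sigma\vert Y$ unbalanced has an unbalanced component, which contains a negative cycle and hence has at least $d$ vertices; therefore $b(\Sigma\vert Y)\leqslant c(\Sigma\vert Y)-1\leqslant \left\vert V(\Gamma)\right\vert-d$. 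Equality forces $\Sigma\vert Y$ to be a single unbalanced component on exactly $d$ vertices together with isolated vertices; that component contains a negative Hamiltonian $d$-cycle, and it can have no further edge, since a chord would split the cycle into two shorter cycles whose signs multiply to the (negative) sign of the $d$-cycle, producing a negative cycle of length less than $d$ and contradicting minimality. Hence every $Y$ attaining the top degree $\left\vert V(\Gamma)\right\vert-d$ is exactly the edge set of a negative $d$-cycle, so $|Y|=d$ for all of them, they all contribute with the \emph{same} sign, and the coefficient of $x^{\left\vert V(\Gamma)\right\vert-d}$ in $\mathsf O(\Sigma,x)-\mathsf E(\Sigma,x)$ is $(-1)^d\cdot m\neq 0$, where $m>0$ is the number of negative $d$-cycles. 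The cancellation you worry about is prevented by minimality of $d$, which pins down the cardinality of every contributing edge set; no frame-matroid or asymptotic argument is needed.

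Two smaller corrections. Your description of the maximisers as ``single negative cycles together with a spanning forest on the remaining vertices'' is wrong in the direction that matters: any forest edge outside the cycle merges two balanced components and strictly decreases $b(\Sigma\vert Y)$, so the maximisers have all remaining vertices isolated (and, as above, the cycles must be of minimum length). Also, the $\lambda=1$ versus $\lambda=2$ detour is a dead end for exactly the reason you note and should be cut. As written, the converse direction is an outline with the decisive step missing, not a proof.
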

\begin{proof}
    If $\Sigma$ is balanced, then we have $\mathsf E(\Sigma, x) = \mathsf O(\Sigma, x) = \chi(|\Sigma|, x)$.
    Otherwise, suppose that $\Sigma = (\Gamma, \sigma)$ is unbalanced.
    Let $d$ be the minimum size of a negative cycle of $\Sigma$ and suppose that there are $m > 0$ such cycles in $\Sigma$.
    Let $Y$ be the edge set of one such cycle in $\Sigma$. 
    Then we have $|Y|=d$ and $b(\Sigma \vert Y) = c(\Sigma \vert Y)-1 = |V(\Gamma)|-d$.
    Consider the polynomial $\mathsf O(\Sigma, x) - \mathsf E(\Sigma, x)$.
    Applying Theorem~\ref{thm:iechrom}, we deduce that the coefficient of $x^{|V(\Gamma)|-d}$ in $\mathsf O(\Sigma, x) - \mathsf E(\Sigma, x)$ is equal to $(-1)^d \cdot m \neq 0$ and therefore, $\mathsf E(\Sigma, x) \neq \mathsf O(\Sigma, x)$.
\end{proof}

Let $\Sigma = (\Gamma, \sigma)$ be a signed graph such that $V(\Gamma) \neq \varnothing$ and $\Gamma$ is connected.
Balance can also be characterised in the following way.
By \cite[Theorem 6.1]{RQHZ22} \footnote{As clarified by Ren et al.~\cite{renetalPersonalCommunication}, Theorem 6.1 in \cite{RQHZ22} holds only for the case where $|\Sigma|$ is connected.}, we have that $\Sigma$ is balanced if and only if $\mathsf O(\Sigma, 0) = 0$.

Theorem~\ref{thm:polyiffbal} implies that co-chromatic signed graphs $\Sigma_1$ and $\Sigma_2$ are either both balanced or both unbalanced.
If both are balanced, then $|\Sigma_1|$ and $|\Sigma_2|$ cannot be isomorphic since $\Sigma_1$ and $\Sigma_2$ are not switching isomorphic.
The case where co-chromatic signed graphs are both balanced is certainly possible since there exist examples of non-isomorphic unsigned graphs that have the same chromatic polynomial \cite{Bari, Read68}.

Zaslavsky~\cite[Question $9.1$]{Zas12} (see Question~\ref{qn:zas}) asked if there are co-chromatic signed graphs $\Sigma_1$ and $\Sigma_2$ where $|\Sigma_1| = |\Sigma_2|$.
If so, then $\Sigma_1$ and $\Sigma_2$ must both be unbalanced.
In Figure~\ref{fig:signedgem}, we provide an example of co-chromatic signed gem graphs $G_1$ and $G_2$, thereby answering Question~\ref{qn:zas} in the affirmative.
We have that
\[
\mathscr C(G_1, x) = \mathscr C(G_2, x) = \left( x (x-2)^2 (x^2 - 3x +3), \; (x-1)^3 (x-2)^2 \right).
\]

\begin{figure}[htbp]
    \centering
    \begin{subfigure}{0.33\textwidth}
        \centering
        \begin{tikzpicture}[scale=1.2, every node/.style={scale=0.9}]
        \tikzstyle{vertex}=[circle, thin, fill=black!90, inner sep=0pt, minimum width=4pt]
        \node [vertex] (v1) at (0,0) {};
        \node [vertex] (v2) at (0:1.75) {};
        \node [vertex] (v3) at (60:1.75) {};
        \node [vertex] (v4) at (120:1.75) {};
        \node [vertex] (v5) at (180:1.75) {};
        \draw  (v1) edge (v2);
        \draw  (v1) edge (v3);
        \draw  (v1) edge (v4);
        \draw  (v1) edge (v5);
        \draw  (v2) edge (v3);
        \draw  (v3) edge (v4);
        \draw  (v4) edge [dashed] (v5);
        \end{tikzpicture}
        \caption*{$G_1$}
    \end{subfigure}
    \begin{subfigure}{0.33\textwidth}
        \centering
        \begin{tikzpicture}[scale=1.2, every node/.style={scale=0.9}]
        \tikzstyle{vertex}=[circle, thin, fill=black!90, inner sep=0pt, minimum width=4pt]
        \node [vertex] (v1) at (0,0) {};
        \node [vertex] (v2) at (0:1.75) {};
        \node [vertex] (v3) at (60:1.75) {};
        \node [vertex] (v4) at (120:1.75) {};
        \node [vertex] (v5) at (180:1.75) {};
        \draw  (v1) edge (v2);
        \draw  (v1) edge (v3);
        \draw  (v1) edge (v4);
        \draw  (v1) edge (v5);
        \draw  (v2) edge (v3);
        \draw  (v3) edge [dashed] (v4);
        \draw  (v4) edge (v5);
        \end{tikzpicture}
        \caption*{$G_2$}
    \end{subfigure}
    \caption{Co-chromatic signed gem graphs $G_1$ and $G_2$.}
    \label{fig:signedgem}
\end{figure}
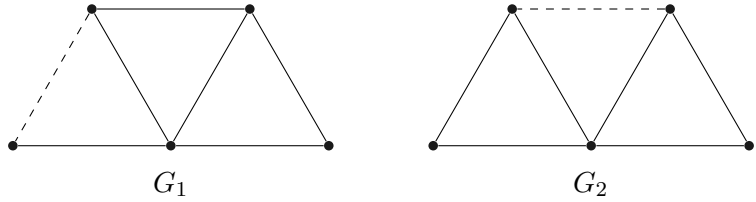

Additionally, the example in Figure~\ref{fig:cochromnonisomabs} shows that it is possible to have co-chromatic signed graphs $\Sigma_1$ and $\Sigma_2$ where both of them are unbalanced and the underlying graphs $|\Sigma_1|$ and $|\Sigma_2|$ are not isomorphic.
We have that
\[
\mathscr C(\Sigma_1, x) = \mathscr C(\Sigma_2, x) = \left( x (x-1) (x-2)^2 (x^2 - 3x +3), \; (x-1)^4 (x-2)^2 \right).
\]

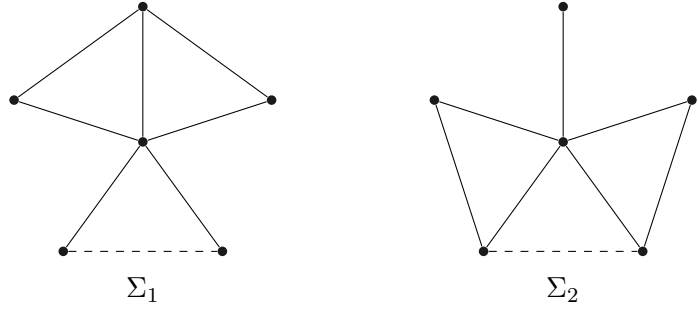
\begin{figure}[htbp]
    \centering
    \begin{subfigure}{0.33\textwidth}
        \centering
        \begin{tikzpicture}[scale=1.2, every node/.style={scale=0.9}]
        \tikzstyle{vertex}=[circle, thin, fill=black!90, inner sep=0pt, minimum width=4pt]
        \node [vertex] (v1) at (0, 0) {};
        \node [vertex] (v2) at (18:1.5) {};
        \node [vertex] (v3) at (90:1.5) {};
        \node [vertex] (v4) at (162:1.5) {};
        \node [vertex] (v5) at (234:1.5) {};
        \node [vertex] (v6) at (306:1.5) {};
        \draw  (v1) edge (v2);
        \draw  (v1) edge (v3);
        \draw  (v1) edge (v4);
        \draw  (v1) edge (v5);
        \draw  (v1) edge (v6);
        \draw  (v2) edge (v3);
        \draw  (v3) edge (v4);
        \draw  (v5) edge [dashed] (v6);
        \end{tikzpicture}
        \caption*{$\Sigma_1$}
    \end{subfigure}
    \begin{subfigure}{0.33\textwidth}
        \centering
        \begin{tikzpicture}[scale=1.2, every node/.style={scale=0.9}]
        \tikzstyle{vertex}=[circle, thin, fill=black!90, inner sep=0pt, minimum width=4pt]
        \node [vertex] (v1) at (0, 0) {};
        \node [vertex] (v2) at (18:1.5) {};
        \node [vertex] (v3) at (90:1.5) {};
        \node [vertex] (v4) at (162:1.5) {};
        \node [vertex] (v5) at (234:1.5) {};
        \node [vertex] (v6) at (306:1.5) {};
        \draw  (v1) edge (v2);
        \draw  (v1) edge (v3);
        \draw  (v1) edge (v4);
        \draw  (v1) edge (v5);
        \draw  (v1) edge (v6);
        \draw  (v2) edge (v6);
        \draw  (v4) edge (v5);
        \draw  (v5) edge [dashed] (v6);
        \end{tikzpicture}
        \caption*{$\Sigma_2$}
    \end{subfigure}
    \caption{Co-chromatic signed graphs $\Sigma_1$ and $\Sigma_2$.}
    \label{fig:cochromnonisomabs}
\end{figure}

Next, we conjecture the following:

\begin{conj} \label{conj:cochromsignedKn}
    There are no co-chromatic signed complete graphs.
\end{conj}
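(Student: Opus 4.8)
The plan is to make $\mathscr C(\Sigma,x)$ completely explicit for a signed complete graph $\Sigma$, turning it into a combinatorial invariant of the switching-isomorphism class, and then to show that this invariant is complete.

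First I would record an exact expansion. Let $\Sigma$ have underlying graph $K_n$. Because $|\Sigma|$ is complete, in a proper $C_{2k+1}$-colouring at most one vertex may receive the colour $0$, and the remaining vertices split into classes according to the absolute value of their colour; such a class $B$ is admissible — and then in exactly two ways, matching the two global sign patterns — precisely when $\Sigma|_B$ is \emph{anti-balanced} (reversing every edge-sign yields a balanced signed graph), which for a signed complete graph is equivalent, by triangulating cycles with chords, to the condition that every triangle contained in $B$ is negative. Writing $g_m(\Sigma)$ for the number of partitions of $V(|\Sigma|)$ into $m$ such admissible blocks and setting $g'_m(\Sigma)=\sum_{v}g_m(\Sigma-v)$ (with $\Sigma-v$ the vertex-deleted signed complete graph), counting colourings and substituting $\lambda=2k$ and $\lambda=2k+1$ gives
\begin{align*}
\mathsf E(\Sigma,x)&=\sum_{m\geqslant 0} g_m(\Sigma)\,x(x-2)(x-4)\cdots(x-2m+2),\\
\mathsf O(\Sigma,x)&=\sum_{m\geqslant 0}\bigl(g_m(\Sigma)+g'_m(\Sigma)\bigr)(x-1)(x-3)\cdots(x-2m+1).
\end{align*}
Each of these two families of products is a graded $\mathbb Z$-basis of $\mathbb Z[x]$, so $\mathscr C(\Sigma,x)$ is equivalent data to the two sequences $\bigl(g_m(\Sigma)\bigr)_m$ and $\bigl(g'_m(\Sigma)\bigr)_m$; in particular $\mathscr C$ determines the number of negative triangles of $\Sigma$ and, more generally, the number of $j$-subsets all of whose triples are negative, for each $j$.

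Next I would reduce to two-graphs. The admissible blocks of $\Sigma$ form a downward-closed set system, i.e.\ a simplicial complex $\mathcal A(\Sigma)$ on $V(|\Sigma|)$, and by the triangle characterisation above $\mathcal A(\Sigma)$ is exactly the flag complex of the $3$-uniform hypergraph $\Omega(\Sigma)$ of negative triangles, while conversely $\Omega(\Sigma)=\mathcal A(\Sigma)\cap\binom{V}{3}$. Using the classical bijection between switching classes of signed $K_n$ and two-graphs on $n$ points (and between switching-isomorphism of the former and isomorphism of the latter), Conjecture~\ref{conj:cochromsignedKn} becomes the following statement: a two-graph $\Omega$ on $n$ points is determined up to isomorphism by the numbers $g_m$ — the number of partitions of $[n]$ into $m$ faces of the flag complex of $\Omega$ — together with the deck-sums $g'_m=\sum_v g_m(\Omega-v)$, for all $m$.

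Finally, I would attack this reformulated statement by induction on $n$. The relation $g'_m(\Sigma)=\sum_v g_m(\Sigma-v)$ shows that the extra content of $\mathsf O$ beyond $\mathsf E$ is exactly the deck-sum of the invariant over the $n$ vertex-deleted signed complete graphs, so if $\mathscr C$ separated switching classes in order $n-1$ one would hope to bootstrap to order $n$. I expect this to be the main obstacle: recovering a two-graph from deck-sums of a single numerical parameter is precisely the sort of step guarded by the (open) polynomial-reconstruction problem, so some genuinely new input is needed — either a stronger invariant teased out of $(g_m)_m$ by Möbius inversion over the partition lattice (this recovers the whole $f$-vector of $\mathcal A(\Sigma)$ at once, but stalls because the isomorphism type of $\Omega$ is not visible from counts of bounded subconfigurations), or some rigidity peculiar to complete underlying graphs. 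As a first move I would test the reformulated statement on structured subfamilies — regular two-graphs, and the signed threshold graphs of Section~\ref{sec:bivar} — to see which structural hypothesis makes the induction close; a full resolution of the conjecture appears to need a new idea.
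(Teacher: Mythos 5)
This statement is posed in the paper as a conjecture and the authors give no proof of it, so there is no argument of theirs to compare yours against; the only question is whether your proposal settles the conjecture, and it does not. Your preliminary reduction is correct and worthwhile: for a signed complete graph every colour class is forced to be an anti-balanced block admitting exactly two sign patterns, which yields
\[
\mathsf E(\Sigma,x)=\sum_{m\geqslant 0} g_m(\Sigma)\,x(x-2)\cdots(x-2m+2),\qquad
\mathsf O(\Sigma,x)=\sum_{m\geqslant 0}\bigl(g_m(\Sigma)+g'_m(\Sigma)\bigr)(x-1)(x-3)\cdots(x-2m+1),
\]
and since these products form a graded basis of $\mathbb Z[x]$, the pair $\mathscr C(\Sigma,x)$ is indeed equivalent data to the two sequences $(g_m(\Sigma))_m$ and $(g'_m(\Sigma))_m$. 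This is consistent with, and generalises, Proposition~\ref{prop:minKn}: for $-K_n$ every subset is anti-balanced, so $g_m=\stirling{n}{m}$. The passage to two-graphs via negative triangles is also standard and correct.

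The gap is that the conjecture is precisely the assertion that $(g_m)_m$ and $(g'_m)_m$ separate switching-isomorphism classes, and your proposal stops exactly there: a change of basis in $\mathbb Z[x]$ repackages the invariant but adds no information, so the reformulated statement is neither weaker nor stronger than the original, and you explicitly leave it open. Two subsidiary claims also need care. First, the assertion that $(g_m)_m$ determines the number of anti-balanced $j$-sets for every $j$ (equivalently the full $f$-vector of $\mathcal A(\Sigma)$, whether by direct extraction or by ``M\"obius inversion'') is not justified as stated: already for $j=5$, reading off the number of anti-balanced $5$-sets from $g_{n-4}$ requires the number of pairs of disjoint negative triangles, which is not a function of the $f$-vector, and M\"obius inversion over the partition lattice would need the block-type-refined counts rather than the aggregates $g_m$. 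Second, the proposed induction on $n$ via the deck-sums $g'_m=\sum_v g_m(\Sigma\backslash v)$ runs directly into the reconstruction-type difficulty you yourself identify. As it stands this is a sensible reformulation and a reasonable plan of attack, but not a proof.
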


If Conjecture~\ref{conj:cochromsignedKn} is true, then the chromatic polynomials form a complete set of invariants for signed complete graphs up to switching isomorphism.
One can also consider versions of Conjecture~\ref{conj:cochromsignedKn} over other families of signed graphs, such as the signed threshold graphs that we will define in Section~\ref{sec:bivar}.

We conclude this section by revisiting the computations of chromatic polynomials of signed graphs in \cite{Involve}.
Beck et al.~\cite[Figure 1 and Figure 2]{Involve} list, up to switching isomorphism, all six signed graphs whose underlying graph is the Petersen graph and all signed graphs whose underlying graph is the complete graph $K_3$, $K_4$, or $K_5$.
They denote the signed Petersen graphs by $P_1, \dots, P_6$ and the signed complete graphs by $K_3^{(1)}$, $K_3^{(2)}$, $K_4^{(1)}$, $K_4^{(2)}$, $K_4^{(3)}$, $K_5^{(1)}, \dots, K_5^{(7)}$.
The chromatic polynomials of each of these signed graphs are listed in \cite[Theorem 1 and Theorem 2]{Involve}.
However, we find some inaccuracies in their computations of $\mathsf E(\Sigma, x)$ of the signed graphs $P_2$, $P_3$, $P_4$, $P_5$, $P_6$, $K_4^{(2)}$, $K_4^{(3)}$, $K_5^{(2)}$, $K_5^{(3)}$, $K_5^{(4)}$, $K_5^{(5)}$, $K_5^{(6)}$, and $K_5^{(7)}$.
We provide the chromatic polynomials of the signed Petersen graphs and the signed complete graphs in Table~\ref{tab:involvePetersen} and Table~\ref{tab:involvecomplete}, respectively.

\begin{table}[htbp]
\centering
\begin{tabularx}{\textwidth}{|c|X|X|}
\hline
$\Sigma$ & \multicolumn{1}{c|}{$\mathsf E(\Sigma, x)$} & \multicolumn{1}{c|}{$\mathsf O(\Sigma, x)$} \\
\hhline{|===|}
\hhline{|~~~|}
$P_1$ & $x(x-1)(x-2)(x^7-12x^6+67x^5-230x^4+529x^3-814x^2+775x-352)$ & $x(x-1)(x-2)(x^7-12x^6+67x^5-230x^4+529x^3-814x^2+775x-352)$ \\
\hline
$P_2$ & $x(x-2)(x^8-13x^7+79x^6-297x^5+763x^4-1379x^3+1717x^2-1351x+516)$ & $(x-1)^2(x-2)^2(x^6-9x^5+38x^4-98x^3+163x^2-165x+82)$ \\
\hline
$P_3$ & $x(x-2)(x^8-13x^7+79x^6-297x^5+765x^4-1397x^3+1781x^2-1462x+597)$ & $(x-1)(x^9-14x^8+91x^7-364x^6+995x^5-1938x^4+2703x^3-2621x^2+1619x-492)$ \\
\hline
$P_4$ & $x(x-2)(x^8-13x^7+79x^6-297x^5+767x^4-1411x^3+1823x^2-1524x+635)$ & $(x-1)(x^9-14x^8+91x^7-364x^6+997x^5-1956x^4+2773x^3-2767x^2+1781x-568)$ \\
\hline
$P_5$ & $x(x-2)(x^8-13x^7+79x^6-297x^5+765x^4-1401x^3+1803x^2-1509x+632)$ & $(x-1)(x-2)^2(x^2-4x+5)(x^5-6x^4+18x^3-34x^2+37x-28)$ \\
\hline
$P_6$ & $x(x^9-15x^8+105x^7-455x^6+1365x^5-2981x^4+4785x^3-5460x^2+4005x-1425)$ & $(x-1)(x^9-14x^8+91x^7-364x^6+1001x^5-1992x^4+2913x^3-3057x^2+2103x-727)$ \\
\hline
\end{tabularx}
\caption{The chromatic polynomials of the signed Petersen graphs, cf.~\cite[Figure $1$]{Involve}.}
\label{tab:involvePetersen}
\end{table}

\begin{table}[htbp]
\centering
\begin{tabular}{|c|l|l|}
\hline
$\Sigma$ & \multicolumn{1}{c|}{$\mathsf E(\Sigma, x)$} & \multicolumn{1}{c|}{$\mathsf O(\Sigma, x)$} \\
\hhline{|===|}
\hhline{|~~~|}
$K_3^{(1)}$ & $x(x-1)(x-2)$ & $x(x-1)(x-2)$ \\
\hline
$K_3^{(2)}$ & $x(x^2 - 3x + 3)$ & $(x-1)^3$ \\
\hhline{|=|=|=|}
$K_4^{(1)}$ & $x(x-1)(x-2)(x-3)$ & $x(x-1)(x-2)(x-3)$ \\
\hline
$K_4^{(2)}$ & $x(x-2)(x^2-4x+5)$ & $(x-1)^2 (x-2)^2$ \\
\hline
$K_4^{(3)}$ & $x(x^3-6x^2+15x-13)$ & $(x-1)(x^3-5x^2+10x-7)$ \\
\hhline{|=|=|=|}
$K_5^{(1)}$ & $x(x-1)(x-2)(x-3)(x-4)$ & $x(x-1)(x-2)(x-3)(x-4)$ \\
\hline
$K_5^{(2)}$ & $x(x-2)(x-3)(x^2-5x+7)$ & $(x-1)^2 (x-2) (x-3)^2$ \\
\hline
$K_5^{(3)}$ & $x(x-2)(x^3-8x^2+25x-29)$ & $(x-1)(x-2)(x^3-7x^2+18x-17)$ \\
\hline
$K_5^{(4)}$ & $x(x-2)(x-3)(x^2-5x+8)$ & $(x-1)(x-2)^3(x-3)$ \\
\hline
$K_5^{(5)}$ & $x(x-2)(x^3-8x^2+26x-31)$ & $(x-1)(x-2)(x^3-7x^2+19x-19)$ \\
\hline
$K_5^{(6)}$ & $x(x-2)(x-3)(x^2-5x+9)$ & $(x-1)(x-2)(x-3)(x^2-4x+5)$ \\
\hline
$K_5^{(7)}$ & $x(x^4-10x^3+45x^2-95x+75)$ & $(x-1)(x^4-9x^3+36x^2-69x+51)$ \\
\hline
\end{tabular}
\caption{The chromatic polynomials of the signed complete graphs, cf.~\cite[Figure $2$]{Involve}.}
\label{tab:involvecomplete}
\end{table}

\section{Joins of all-positive or all-negative signed complete graphs}
\label{sec:signedjoin}

In this section, we exhibit closed formulae for the chromatic polynomials of signed graphs obtained from various joins of all-positive or all-negative signed complete graphs.

Let $\Sigma_1$ and $\Sigma_2$ be signed graphs with disjoint vertex sets.
The following definitions are the same as those in \cite{Mattern21}.
The \textbf{all-positive join} of $\Sigma_1$ and $\Sigma_2$, denoted by $\Sigma_1 \veeplus \Sigma_2$, is the signed graph obtained by connecting each vertex of $\Sigma_1$ to each vertex of $\Sigma_2$ with a positive edge.
Similarly, the \textbf{all-negative join} of $\Sigma_1$ and $\Sigma_2$, denoted by $\Sigma_1 \veeminus \Sigma_2$, is the signed graph obtained by connecting each vertex of $\Sigma_1$ to each vertex of $\Sigma_2$ with a negative edge.
If $\Sigma$ is a signed graph, we let
\[
\Sigma \veeplus K_0 = K_0 \veeplus \Sigma = \Sigma \veeminus K_0 = K_0 \veeminus \Sigma = \Sigma.
\]
Note that the operations $\veeplus$ and $\veeminus$ both are associative and commutative.

Let $n \geqslant 0$ be an integer.
We define the polynomials
$$\displaystyle (x)_n \coloneqq \prod_{j=0}^{n-1} (x-j) \text{ and } \displaystyle \tensor*[_2]{(x)}{_n} \coloneqq \prod_{j=0}^{n-1} (x-2j).$$
As an example, we have $\mathsf E(+K_n, x) = \mathsf O(+K_n, x) = (x)_n$ for all integers $n \geqslant 0$.
Recall the Stirling number of the second kind, denoted by $\displaystyle \stirling{n}{k}$, which is the number of ways one can partition a set of $n$ elements into $k$ nonempty subsets \cite[Chapter $6$]{ConcreteMath}.
Let $n$ be an integer.
Define
\[
H(n,x) \coloneqq \begin{cases}
    \displaystyle
    \sum_{i=0}^n \stirling{n}{i} \cdot \tensor*[_2]{(x)}{_i}, & \text{if $n \geqslant 0$;} \\
    0, & \text{if $n<0$.}
\end{cases}
\]
We begin by stating a result of Zaslavsky.
\begin{prop}[{\cite[(5.7) and (5.8)]{ZasChromInv}}]
\label{prop:minKn}
Let $\Sigma$ be the signed graph $-K_n$ for some integer $n \geqslant 0$.
Then $\mathsf E(\Sigma, x) = H(n,x)$ and $\mathsf O(\Sigma, x) = \mathsf E(\Sigma, x-1) + n \cdot H(n-1, x-1)$.
\end{prop}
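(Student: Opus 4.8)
The plan is to evaluate $f(-K_n,\lambda)$ directly by a colour-counting argument, treating even and odd $\lambda$ separately, and then to deduce each claimed polynomial identity from the fact that a polynomial is determined by its values at infinitely many integers.

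For the even case I would fix an integer $k\geqslant 0$ and set $\lambda=2k$, so the colour set is $C_{2k}=\{\pm 1,\dots,\pm k\}$. Since every edge of $-K_n$ is negative and $K_n$ is complete, a function $\kappa$ is a proper $C_{2k}$-colouring exactly when $\kappa(v)\neq-\kappa(w)$ for all distinct $v,w$; equivalently, the image of $\kappa$ contains no pair $\{c,-c\}$. I would then count such $\kappa$ by recording the set $M\subseteq\{1,\dots,k\}$ of magnitudes that occur, a sign for each element of $M$, and a surjection of $V(K_n)$ onto the resulting set of signed magnitudes: if $|M|=i$ this contributes $\binom{k}{i}\cdot 2^{i}\cdot i!\stirling{n}{i}$, so that
\[
f(-K_n,2k)=\sum_{i=0}^{n}\stirling{n}{i}\cdot 2^{i}\,(k)_i=\sum_{i=0}^{n}\stirling{n}{i}\cdot\tensor*[_2]{(2k)}{_i}=H(n,2k),
\]
using $2^{i}(k)_i=\prod_{j=0}^{i-1}(2k-2j)$. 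As $\mathsf E(-K_n,2k)=f(-K_n,2k)$ for every $k\geqslant 0$, the polynomials $\mathsf E(-K_n,x)$ and $H(n,x)$ agree at infinitely many points and hence coincide.

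For the odd case I would take $\lambda=2k+1$, so $C_{2k+1}=\{-k,\dots,-1,0,1,\dots,k\}$, and observe that in a proper colouring of $-K_n$ the colour $0$ may be used on at most one vertex, because $0=-0$. I would then split according to whether $0$ is used: the colourings avoiding $0$ are precisely the proper $\{\pm1,\dots,\pm k\}$-colourings counted above, giving $f(-K_n,2k)$; the colourings using $0$ are obtained by choosing the unique vertex coloured $0$ in $n$ ways and properly colouring the remaining copy of $-K_{n-1}$ with $\{\pm1,\dots,\pm k\}$, giving $n\cdot f(-K_{n-1},2k)$. Hence (again by the even case) $f(-K_n,2k+1)=H(n,2k)+n\,H(n-1,2k)$, and after substituting $x=2k+1$ and invoking $\mathsf E(-K_n,x-1)=H(n,x-1)$ from the even case, the two sides of the claimed identity for $\mathsf O(-K_n,x)$ agree at all odd integers $x\geqslant 0$, hence coincide as polynomials.

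I do not expect a serious obstacle here; the one place that needs care is the enumeration by magnitude, sign, and surjection, i.e.\ verifying that $\binom{k}{i}\,2^{i}\,i!\stirling{n}{i}$ is neither an over- nor an under-count and that it collapses to $\stirling{n}{i}\tensor*[_2]{(2k)}{_i}$. The degenerate values ($n=0$, the $i=0$ term, and the convention $H(n,x)=0$ for $n<0$ entering the $\mathsf O$-formula at $n=0$) should be checked but are immediate. (One could instead derive both identities from Theorem~\ref{thm:iechrom} by analysing balanced components of all-negative spanning subgraphs of $K_n$---these are exactly the bipartite components---but the direct count above is cleaner.)
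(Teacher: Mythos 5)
Your proof is correct, and it is essentially the paper's approach: the paper cites Zaslavsky for this statement but proves the generalisation Proposition~\ref{prop:lmn1} by exactly this kind of count (partition the vertex set by colour classes via Stirling numbers, assign colours avoiding $\pm$-pairs to get the factor $\tensor*[_2]{(x)}{_i}$, and in the odd case split on whether a unique vertex receives the colour $0$), of which your argument is the specialisation $l=m=0$. No gaps; the surjection count $\binom{k}{i}2^i\,i!\stirling{n}{i}$ and its collapse to $\stirling{n}{i}\tensor*[_2]{(2k)}{_i}$ are right.
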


Next, we extend Proposition~\ref{prop:minKn} by obtaining closed formulae for signed graphs obtained by applying the operations $\veeplus$ or $\veeminus$ on all-positive or all-negative signed complete graphs.

Let $l, m, n$ be integers.
Define
\[
    H_1(l,m,n,x) \coloneqq \sum_{i=0}^l \sum_{j=0}^n \sum_{k=0}^{\min(i,j)} k! \binom{i}{k} \binom{j}{k} \stirling{l}{i} \stirling{n}{j}
    \cdot \tensor*[_2]{(x)}{_{i+j-k}} \cdot (x-i-j)_m
\]
if $l, m, n \geqslant 0$ and $H_1(l, m, n,x) \coloneqq 0$ if $l<0$, $m<0$, or $n<0$.
Let $A$ be a finite subset of $\mathbb{Z}$ and we define the set $-A \coloneqq \{-x : x\in A\}$.
The following proposition is a generalisation of Proposition~\ref{prop:minKn}.

\begin{prop}
\label{prop:lmn1}
    Let $\Sigma$ be the signed graph $(-K_l) \veeplus (+K_m) \veeplus (-K_n)$ for some integers $l, m, n \geqslant 0$.
    Then 
    \[
    \mathscr C(\Sigma, x) = \left( H_1(l,m,n,x), \; \mathsf E(\Sigma, x-1) + \hat H_1(l, m, n, x) \right),
    \]
    where $\hat H_1(l, m, n, x) \coloneqq l \cdot H_1(l-1, m, n, x-1) + m \cdot H_1(l, m-1, n, x-1) +n \cdot H_1(l, m, n-1, x-1)$.
\end{prop}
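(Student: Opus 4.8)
The plan is to count proper $C_\lambda$-colourings of $\Sigma = (-K_l) \veeplus (+K_m) \veeplus (-K_n)$ directly, and to recognise the resulting expression as $H_1(l,m,n,x)$ when $\lambda$ is even and as the claimed formula when $\lambda$ is odd. Write $L$, $M$, $N$ for the three vertex classes, of sizes $l$, $m$, $n$. A colouring $\kappa : V \to C_\lambda$ is proper iff: within $L$ it is a proper colouring of $-K_l$, within $N$ a proper colouring of $-K_n$, within $M$ an injection, the edges between $L$ and $M$ (positive) force $\kappa(L) \cap \kappa(M) = \varnothing$, the edges between $N$ and $M$ (positive) force $\kappa(N) \cap \kappa(M) = \varnothing$, and the edges between $L$ and $N$ (negative) force $\kappa(u) \neq -\kappa(v)$ for $u \in L$, $v \in N$. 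So I would first fix the colourings of $L$ and $N$ and then count the extensions to $M$.

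For the even case, recall (this is essentially the combinatorial content behind Proposition~\ref{prop:minKn}) that a proper zero-free colouring of $-K_l$ using exactly $i$ distinct ``unsigned'' colour-pairs (i.e.\ with $|\{|\kappa(u)| : u \in L\}| = i$, since properness of $-K_l$ forbids both $c$ and $-c$ occurring) corresponds to choosing an ordered partition of $L$ into $i$ nonempty blocks — contributing $\stirling{l}{i}$ — together with a choice of $i$ distinct values in $C_\lambda$ up to sign with a sign on each block; over $C_\lambda$ of even size $\lambda$ this is $\tensor*[_2]{(\lambda)}{_i}$ once the ordered-partition count is separated out. Do the same for $N$ with index $j$, getting $\stirling{n}{j}$. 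The key extra ingredient is the interaction between $L$ and $N$: the negative join forbids a colour $c$ on $L$ and $-c$ on $N$; equivalently, on the level of unsigned values, if $k$ of the $i$ unsigned $L$-values coincide with $N$-values, those $k$ shared values must receive ``compatible'' signs, which removes exactly the forbidden configurations. Counting the number of ways to match $k$ of the $i$ blocks of $L$ with $k$ of the $j$ blocks of $N$ gives the factor $k!\binom{i}{k}\binom{j}{k}$, and then the union of the unsigned $L$- and $N$-values has size $i + j - k$, contributing $\tensor*[_2]{(\lambda)}{_{i+j-k}}$; finally $M$ must be coloured injectively avoiding all $i+j-k$ used values (note $M$ sees the full set $C_\lambda$, signs being irrelevant on positive edges), contributing $(\lambda - (i+j-k))_m$. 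Wait — the formula has $(x-i-j)_m$, not $(x-i-j+k)_m$; this is because each matched pair contributes two actual colours $c$ and $-c$ (a positive and a negative copy) that $M$ must still avoid, so the number of actual colours forbidden to $M$ is $i + j$, not $i+j-k$. Assembling these factors and summing over $i,j,k$ reproduces $H_1(l,m,n,x)$ evaluated at $\lambda$; since this holds for all even $\lambda$, it holds as a polynomial identity, giving $\mathsf E(\Sigma,x) = H_1(l,m,n,x)$.

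For the odd case I would mimic the proof of the odd part of Proposition~\ref{prop:minKn}. With $\lambda = 2k+1$, the colour set $C_\lambda$ contains $0$. Split proper $C_\lambda$-colourings according to which vertices, if any, receive colour $0$: either no vertex is coloured $0$, in which case the restriction to $C_\lambda \setminus \{0\}$ (a zero-free set of even size $\lambda - 1$) is counted by $\mathsf E(\Sigma, \lambda - 1)$; or exactly one vertex $v$ is coloured $0$ (at most one, since $0 \neq \sigma(e)\cdot 0$ would be violated by any edge, and $\Sigma$ is complete so any two vertices are adjacent), and deleting $v$ leaves a proper zero-free colouring of $\Sigma - v$ in $\lambda - 1$ colours. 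If $v \in L$ then $\Sigma - v = (-K_{l-1}) \veeplus (+K_m) \veeplus (-K_n)$, contributing $l \cdot \mathsf E(\Sigma-v, \lambda-1) = l \cdot H_1(l-1,m,n,\lambda-1)$, and similarly for $v \in M$ and $v \in N$. Summing gives $\mathsf O(\Sigma, \lambda) = \mathsf E(\Sigma, \lambda - 1) + \hat H_1(l,m,n,\lambda)$ for all odd $\lambda$, hence as polynomials, which is exactly the asserted second coordinate of $\mathscr C(\Sigma,x)$.

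The main obstacle is the bookkeeping in the even case: correctly tracking the distinction between an ``unsigned colour value'' (an element of $\{1,\dots,\lambda/2\}$), the two signed colours it gives rise to, and how the negative $L$--$N$ join constrains the signs on shared values — in particular getting the factor $k!\binom{i}{k}\binom{j}{k}$ and the indices $i+j-k$ versus $i+j$ right simultaneously. It may be cleaner to phrase the $L$--$N$ count as an inclusion–exclusion over forbidden $(c,-c)$ pairs, or as a direct count of pairs (ordered set partition of $L$, ordered set partition of $N$, partial matching between blocks, injective sign-respecting assignment of unsigned values), and then verify that summing over the matching size reproduces the triple sum; I would present whichever is shortest. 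Everything else is a routine substitution-and-uniqueness-of-interpolating-polynomial argument, using that $\lambda \mapsto f(\Sigma,\lambda)$ agrees with a polynomial on all even (resp.\ odd) integers.
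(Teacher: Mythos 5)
There is a genuine gap in your even-case argument: you have misread the sign pattern of the joins. In $(-K_l)\veeplus(+K_m)\veeplus(-K_n)$ \emph{every} cross edge is positive (the operation $\veeplus$ is the all-positive join and is associative), so in particular the edges between $L$ and $N$ are positive, not negative; the only negative edges are those inside $L$ and inside $N$. Consequently the correct cross-constraint is $\kappa(L)\cap\kappa(N)=\varnothing$, while $\kappa(L)\cap(-\kappa(N))$ is unconstrained: the parameter $k$ in $H_1$ is the size of the \emph{freely chosen} set $S=\kappa(L)\cap(-\kappa(N))$, not the number of shared unsigned values surviving a forbidden-pair condition. With the correct reading both factors you struggled with come out for free: since $\kappa(L)$ and $\kappa(N)$ are disjoint, $\left\vert \kappa(L)\cup\kappa(N)\right\vert=i+j$ exactly, so $M$ (joined positively to everything) must avoid precisely $i+j$ actual colours, giving $(\lambda-i-j)_m$; and the $k$ matched $\pm$ pairs reduce the number of distinct absolute values to $i+j-k$, giving $\tensor*[_2]{(\lambda)}{_{i+j-k}}$. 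This is exactly how the paper's proof proceeds.

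Under your reading the argument does not close. You correctly observe that your own bookkeeping yields $(\lambda-i-j+k)_m$ for the $M$-factor, and the patch you offer --- that $M$ ``must still avoid'' both $c$ and $-c$ for each matched pair --- has no basis in the properness conditions you yourself wrote down: $M$ is joined to $L$ and to $N$ by positive edges only, so it must avoid exactly the colours actually used on $L\cup N$, never their negatives. Note also that your reading corresponds to a genuinely different signed graph: switching the vertex set $N$ would make the $L$--$N$ edges negative but would simultaneously make the $M$--$N$ edges negative, so the graph with only the $L$--$N$ join negative is not switching equivalent to $\Sigma$ and has a different chromatic count. Your odd-case argument --- splitting on the unique vertex coloured $0$ (unique because the underlying graph is complete) and reducing to the three smaller graphs of the same shape --- is correct and coincides with the paper's.
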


\begin{proof}
    Suppose that \( V(\left\vert \Sigma \right\vert) = V_1 \cup V_2 \cup V_3 \), where \( V_1 = V(\left\vert -K_l \right\vert) \), \( V_2 = V(\left\vert +K_m \right\vert) \), and \( V_3 = V(\left\vert -K_n \right\vert) \) are disjoint sets.
    Let $\lambda \geqslant 0$ be an integer and let \(\kappa : V(\left\vert \Sigma \right\vert) \to C_{\lambda}\) be a proper \(C_{\lambda}\)-colouring of \(\Sigma\).
    First, suppose that $\lambda$ is even.
    Due to the all-positive joins, the sets \(\kappa(V_1)\), \(\kappa(V_2)\), and \(\kappa(V_3)\) are pairwise disjoint. Since \(+K_m\) is all-positive, we have \(\left\vert \kappa(V_2) \right\vert = m\).
    Since both $-K_l$ and $-K_n$ are all-negative, we have $-\kappa(V_1) \cap \kappa(V_1) = -\kappa(V_3) \cap \kappa(V_3) = \varnothing$.
    Suppose that $\left\vert \kappa(V_1) \right\vert = i$ where $0 \leqslant i \leqslant l$ and $\left\vert \kappa(V_3) \right\vert = j$ where $0 \leqslant j \leqslant n$.
    Then $\kappa$ induces a partition of $V_1$ into $i$ disjoint nonempty subsets and a partition of $V_3$ into $j$ disjoint nonempty subsets.
    Let $S = \kappa(V_1) \cap -\kappa(V_3)$ and suppose that $\vert S \vert = k$ where $0 \leqslant k \leqslant \min(i,j)$.
    Then $-S = -\kappa(V_1) \cap \kappa(V_3)$.
    Next, we claim that
    \begin{align*}
        \mathsf E(\Sigma, \lambda) &= \sum_{i=0}^l \sum_{j=0}^n \sum_{k=0}^{\min(i,j)} k! \binom{i}{k} \binom{j}{k} \stirling{l}{i} \stirling{n}{j} \cdot \tensor*[_2]{(\lambda)}{_{i+j-k}} \cdot (\lambda-i-j)_m \\
        &= H_1(l, m, n, \lambda).
    \end{align*}
    Indeed, the factor $\displaystyle \stirling{l}{i} \stirling{n}{j}$ is the number of possible partitions of $V_1$ and $V_3$ as described above.
    The factor $\displaystyle k! \binom{i}{k} \binom{j}{k}$ is the number of choices for $S$ and $-S$ in $\kappa(V_1)$ and $\kappa(V_3)$.
    The factor $\tensor*[_2]{(\lambda)}{_{i+j-k}}$ is the number of ways to assign colours from $C_{\lambda}$ to $\left( \kappa(V_1) \cup \kappa(V_3) \right) \backslash W$ where $\left\vert \left( \kappa(V_1) \cup \kappa(V_3) \right) \backslash W \right\vert = i+j-k$.
    Lastly, since $\left\vert \kappa(V_1) \cup \kappa(V_3) \right\vert = i+j$, the factor $(\lambda-i-j)_m$ is the number of ways to assign the remaining available colours in $C_{\lambda}$ to $\kappa(V_2)$.
    Therefore, we conclude that $\mathsf E(\Sigma, x) = H_1(l,m,n,x)$.
    
    Suppose that $\lambda$ is odd.
    Note that there can be at most one vertex $v \in V(\left\vert \Sigma \right\vert)$ such that $\kappa(v) = 0$.
    The number of functions $\kappa$ such that $\kappa(v) \neq 0$ for all $v \in V(\left\vert \Sigma \right\vert)$ is equal to $\mathsf E(\Sigma, \lambda-1)$.
    Otherwise, suppose that $v$ is the only vertex of $\Sigma$ such that $\kappa(v) = 0$.
    The number of functions $\kappa$ such that $v \in V_1$ is equal to $l \cdot H_1(l-1, m, n, \lambda-1)$.
    The number of functions $\kappa$ such that $v \in V_2$ is equal to $m \cdot H_1(l, m-1, n, \lambda-1)$.
    The number of functions $\kappa$ such that $v \in V_3$ is equal to $n \cdot H_1(l, m, n-1, \lambda-1)$.
    Altogether, we conclude that $\mathsf O(\Sigma, x) = \mathsf E(\Sigma, x-1) + \hat H_1(l, m, n, x)$.
\end{proof}

Observe that $H_1(0,0,n,x) = H(n,x)$ for all integers $n$.
Thus, we can obtain Proposition~\ref{prop:minKn} by setting $l=m=0$ in Proposition~\ref{prop:lmn1}.

For all integers $m, n \geqslant 0$, we have
\begin{align*}
    H_1(0,m,n,x) = \sum_{j=0}^n \stirling{n}{j}
    \cdot \tensor*[_2]{(x)}{_j} \cdot (x-j)_m.
\end{align*}
Setting $l=0$ in Proposition~\ref{prop:lmn1} yields Corollary~\ref{cor:mn} below.

\begin{cor}
\label{cor:mn}
    Let $\Sigma$ be the signed graph $(+K_m) \veeplus (-K_n)$ for some integers $m, n \geqslant 0$.
    Then 
    \[
    \mathscr C(\Sigma, x) = \left( H_1(0,m,n,x), \; \mathsf E(\Sigma, x-1) + \hat H_1(0, m, n, x) \right),
    \]
    where $\hat H_1(0, m, n, x) = m \cdot H_1(0, m-1, n, x-1) +n \cdot H_1(0, m, n-1, x-1)$.
\end{cor}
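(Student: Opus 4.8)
The plan is to obtain Corollary~\ref{cor:mn} directly as the special case $l = 0$ of Proposition~\ref{prop:lmn1}; essentially no new argument is needed, only a check that the various boundary conventions collapse the formulae correctly. (Alternatively, one could re-run the counting argument in the proof of Proposition~\ref{prop:lmn1} with $V_1 = \varnothing$, but invoking the proposition we have already proved is cleaner.)

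First I would observe that, by the convention $K_0 \veeplus \Sigma = \Sigma$, we have $(-K_0) \veeplus (+K_m) \veeplus (-K_n) = (+K_m) \veeplus (-K_n)$, so the signed graph $\Sigma$ in the corollary is exactly the signed graph appearing in Proposition~\ref{prop:lmn1} with $l = 0$. Applying that proposition with $l = 0$ immediately gives
\[
\mathscr C(\Sigma, x) = \left( H_1(0,m,n,x), \; \mathsf E(\Sigma, x-1) + \hat H_1(0, m, n, x) \right),
\]
and it only remains to simplify $H_1(0,m,n,x)$ and $\hat H_1(0,m,n,x)$.

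For $H_1(0,m,n,x)$, the outer sum $\sum_{i=0}^{0}$ retains only $i = 0$, and there $\stirling{0}{0} = 1$; for that term the inner sum $\sum_{k=0}^{0}$ retains only $k = 0$, with coefficient $0!\binom{0}{0}\binom{j}{0} = 1$, while $\tensor*[_2]{(x)}{_{i+j-k}} = \tensor*[_2]{(x)}{_{j}}$ and $(x-i-j)_m = (x-j)_m$. This yields
\[
H_1(0,m,n,x) = \sum_{j=0}^n \stirling{n}{j}\cdot \tensor*[_2]{(x)}{_j}\cdot (x-j)_m,
\]
which is precisely the identity displayed just before the corollary. For $\hat H_1$, recall $\hat H_1(l,m,n,x) = l\cdot H_1(l-1,m,n,x-1) + m\cdot H_1(l,m-1,n,x-1) + n\cdot H_1(l,m,n-1,x-1)$; setting $l = 0$ kills the first summand (indeed $H_1(-1,m,n,x-1) = 0$ by definition), leaving $\hat H_1(0,m,n,x) = m\cdot H_1(0,m-1,n,x-1) + n\cdot H_1(0,m,n-1,x-1)$, as claimed.

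I do not expect any genuine obstacle: the only points requiring a moment's care are the degenerate values $\stirling{0}{0} = 1$, $\binom{0}{k} = 0$ for $k \geqslant 1$, the empty products $(x)_0 = \tensor*[_2]{(x)}{_0} = 1$, and the identity $K_0 \veeplus \Sigma = \Sigma$; once these are in place the corollary follows in one line from Proposition~\ref{prop:lmn1}.
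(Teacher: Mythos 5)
Your proposal is correct and is exactly the paper's approach: the paper derives the corollary by setting $l=0$ in Proposition~\ref{prop:lmn1}, using the same simplification of $H_1(0,m,n,x)$ displayed just before the corollary. Your extra checks of the degenerate conventions ($\stirling{0}{0}=1$, $H_1(-1,m,n,x)=0$, $K_0 \veeplus \Sigma = \Sigma$) are sound and merely make explicit what the paper leaves implicit.
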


Let $l, m, n$ be integers.
Define $H_2(l, m, n, x)$ to be 
\begin{align*}
     & \sum_{i=0}^l \sum_{j=0}^m \sum_{k=0}^n \sum_{s=0}^{\min(i,j)} \sum_{t=0}^k s! \binom{i}{s} \binom{j}{s} \binom{k}{t} \stirling{l}{i} \stirling{m}{j} \stirling{n}{k} \cdot \tensor*[_2]{(x)}{_{i+j+k-t-s}} \cdot (i+j-2s)_t
\end{align*}
if $l, m, n \geqslant 0$ and $H_2(l, m, n,x) \coloneqq 0$ if $l<0$, $m<0$, or $n<0$.

\begin{prop}
\label{prop:lmn2}
    Let $\Sigma$ be the signed graph $(-K_l) \veeplus (-K_m) \veeplus (-K_n)$ for some integers $l, m, n \geqslant 0$.
    Then 
    \[
    \mathscr C(\Sigma, x) = \left( H_2(l,m,n,x), \; \mathsf E(\Sigma, x-1) + \hat H_2(l, m, n, x) \right),
    \]
    where $\hat H_2(l, m, n, x) \coloneqq l \cdot H_2(l-1, m, n, x-1) + m \cdot H_2(l, m-1, n, x-1) + n \cdot H_2(l, m, n-1, x-1)$.
\end{prop}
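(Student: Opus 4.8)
The plan is to mimic the proof of Proposition~\ref{prop:lmn1}, splitting into the even and odd cases and reducing each to a counting argument; as there, it suffices to prove equality of $\mathsf E$ (resp.\ $\mathsf O$) with the claimed polynomial at all even (resp.\ odd) integers $\lambda \geqslant 0$, since polynomials agreeing on infinitely many points coincide. Throughout, write $V(|\Sigma|) = V_1 \cup V_2 \cup V_3$ with $V_1 = V(|-K_l|)$, $V_2 = V(|-K_m|)$, $V_3 = V(|-K_n|)$.

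For even $\lambda$, let $\kappa \colon V(|\Sigma|) \to C_\lambda$ be a proper $C_\lambda$-colouring. The all-positive joins force $\kappa(V_1), \kappa(V_2), \kappa(V_3)$ to be pairwise disjoint, and since each part is all-negative, none of $\kappa(V_1), \kappa(V_2), \kappa(V_3)$ contains a pair $\{c,-c\}$; moreover $\kappa$ partitions each $V_p$ into its colour classes. Putting $i = |\kappa(V_1)|$, $j = |\kappa(V_2)|$, $k = |\kappa(V_3)|$, these three partitions are counted by $\stirling{l}{i}\stirling{m}{j}\stirling{n}{k}$. Unlike in Proposition~\ref{prop:lmn1}, there is no middle all-positive block, so the remaining bookkeeping is split across the pair $(V_1,V_2)$ and the part $V_3$: I would set $s := |\kappa(V_1) \cap (-\kappa(V_2))|$, the number of antipodal colour pairs straddling $V_1$ and $V_2$, and $t := |\{c \in \kappa(V_3) : -c \in \kappa(V_1) \cup \kappa(V_2)\}|$. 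Since $c \in \kappa(V_3)$ and $-c \in \kappa(V_1)\cup\kappa(V_2)$ forces $-c$ to lie outside every antipodal pair contained in $\kappa(V_1)\cup\kappa(V_2)$, and there are exactly $i+j-2s$ such ``unpaired'' colours, we get $t \leqslant i+j-2s$.

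The heart of the argument is that, for fixed partitions and fixed $s,t$, the number of admissible colour assignments equals $s!\binom{i}{s}\binom{j}{s}\cdot\binom{k}{t}\,(i+j-2s)_t\cdot\tensor*[_2]{(\lambda)}{_{i+j+k-s-t}}$: the first factor chooses the $s$ antipodal-pair blocks among the $i$ blocks of $V_1$ and the $j$ blocks of $V_2$ and matches them; the second chooses the $t$ relevant blocks of $V_3$ and injectively links each to an unpaired $(V_1\cup V_2)$-block; and the last assigns actual colours to the resulting $i+j+k-s-t$ independent ``slots'' (a matched pair occupies one antipodal pair of $C_\lambda$, an unmatched block occupies one colour and forbids its antipode), the sequential product $\lambda(\lambda-2)\cdots$ absorbing all binary orientation choices. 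Summing over $i,j,k,s,t$ (terms with $t>i+j-2s$ vanishing since $(i+j-2s)_t=0$) yields $\mathsf E(\Sigma,\lambda) = H_2(l,m,n,\lambda)$, hence $\mathsf E(\Sigma,x) = H_2(l,m,n,x)$. For odd $\lambda$: since any two vertices of $\Sigma$ are adjacent, at most one vertex can be coloured $0$; if none is, the colouring is precisely a proper $C_{\lambda-1}$-colouring, contributing $\mathsf E(\Sigma,\lambda-1)$; if the vertex $v$ coloured $0$ lies in $V_1$ (say), deleting it leaves $(-K_{l-1})\veeplus(-K_m)\veeplus(-K_n)$, all edge-constraints at $v$ are automatically met once the other vertices avoid $0$, and those vertices are coloured from $C_\lambda \setminus \{0\} = C_{\lambda-1}$, contributing $l\cdot H_2(l-1,m,n,\lambda-1)$, and symmetrically for $V_2$ and $V_3$. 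Adding these gives $\mathsf O(\Sigma,\lambda) = \mathsf E(\Sigma,\lambda-1)+\hat H_2(l,m,n,\lambda)$, as required.

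The hard part will be the even case: pinning down an explicit bijection between proper $C_\lambda$-colourings and tuples consisting of three set partitions, the integers $s$ and $t$, and the matching/selection data, and verifying that the orientation multiplicities collapse exactly into $\tensor*[_2]{(\lambda)}{_{i+j+k-s-t}}$ and $(i+j-2s)_t$ with neither over- nor under-counting. It is also worth remarking that $H_2(l,m,n,x)$ is not manifestly symmetric in $l,m,n$ although the polynomial it represents must be, so checking a few small instances (for example $l=m=1$, $n=0$, where $\Sigma = {+}K_2$ and $H_2(1,1,0,x)$ should equal $x(x-1)$) is a useful safeguard.
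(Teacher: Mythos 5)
Your proposal is correct and follows essentially the same route as the paper's proof: the same decomposition into $V_1,V_2,V_3$, the same parameters $i,j,k,s,t$ (your $s$ and $t$ are exactly the paper's $|S|$ with $S=\kappa(V_1)\cap-\kappa(V_2)$ and $|W|$ with $W=\kappa(V_3)\cap(-\kappa(V_1)\cup-\kappa(V_2))$), the same factor-by-factor justification of the summand, and the same reduction of the odd case to the zero-coloured-vertex analysis from Proposition~\ref{prop:lmn1}. The paper likewise notes that the $i+j+k-s-t$ colour slots arise because $\kappa(V_1)\cup\kappa(V_2)$ is determined by $(\kappa(V_1)\setminus S)\cup\kappa(V_2)$, matching your "matched pair occupies one slot" accounting.
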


\begin{proof}
    Suppose that $V (\left\vert \Sigma \right\vert) = V_1 \cup V_2 \cup V_3$ where $V_1 = V (\left\vert -K_l \right\vert)$, $V_2 = V (\left\vert -K_m \right\vert)$, and $V_3 = V (\left\vert -K_n \right\vert)$ are disjoint sets.
    Let $\lambda \geqslant 0$ be an integer and let \(\kappa : V (\left\vert \Sigma \right\vert) \to C_{\lambda}\) be a proper \(C_{\lambda}\)-colouring of \(\Sigma\).
    First, suppose that $\lambda$ is even.
    Due to the all-positive joins, the sets $\kappa(V_1)$, $\kappa(V_2)$, and $\kappa(V_3)$ are pairwise disjoint.
    Note that
    \(
    -\kappa(V_1) \cap \kappa(V_1) = -\kappa(V_2) \cap \kappa(V_2) = -\kappa(V_3) \cap \kappa(V_3) = \varnothing.
    \)
    Suppose that $\left\vert \kappa(V_1) \right\vert = i$ where $0 \leqslant i \leqslant l$, $\left\vert \kappa(V_2) \right\vert = j$ where $0 \leqslant j \leqslant m$, and $\left\vert \kappa(V_3) \right\vert = k$ where $0 \leqslant k \leqslant n$.
    Then $\kappa$ induces a partition of $V_1$ into $i$ disjoint nonempty subsets, a partition of $V_2$ into $j$ disjoint nonempty subsets, and a partition of $V_3$ into $k$ disjoint nonempty subsets.
    Let $S = \kappa(V_1) \cap -\kappa(V_2)$ and suppose that $\vert S \vert = s$ where $0 \leqslant s \leqslant \min(i,j)$.
    Then $-S = -\kappa(V_1) \cap \kappa(V_2)$.
    Let $W = \kappa(V_3) \cap \left( -\kappa(V_1) \cup -\kappa(V_2) \right)$ and suppose that $\left\vert W \right\vert = t$ where $0 \leqslant t \leqslant k$.
    The elements of $W$ are the additive inverses of some of the elements of $\kappa(V_1) \cup \kappa(V_2)$.
    Additionally, we also have that $W \cap \left(-S \cup S \right) = \varnothing$.
    Next, we claim that
    \begin{align*}
        \mathsf E(\Sigma, \lambda) =& \sum_{i=0}^l \sum_{j=0}^m \sum_{k=0}^n \sum_{s=0}^{\min(i,j)} \sum_{t=0}^k s! \binom{i}{s} \binom{j}{s} \binom{k}{t} \stirling{l}{i} \stirling{m}{j} \stirling{n}{k} \cdot \tensor*[_2]{(\lambda)}{_{i+j+k-t-s}} \cdot (i+j-2s)_t \\
        =& \; H_2(l, m, n, \lambda).
    \end{align*}
    Indeed, the factor $\displaystyle \stirling{l}{i} \stirling{m}{j} \stirling{n}{k}$ is the number of possible partitions of $V_1$, $V_2$, and $V_3$ as described above. 
    The factor $\displaystyle s! \binom{i}{s} \binom{j}{s}$ is the number of choices for $S$ and $-S$ in $\kappa(V_1)$ and $\kappa(V_2)$.
    The factor $\displaystyle \binom{k}{t}$ comes from choosing $t$ elements of $W$ from $\kappa(V_3)$.
    Since $\left\vert \left( -\kappa(V_1) \cup -\kappa(V_2) \right) \backslash \left( -S \cup S \right) \right\vert = i+j-2s$, the factor $(i+j-2s)_t$ is the number of ways to assign colours from $\left( -\kappa(V_1) \cup -\kappa(V_2) \right) \backslash \left( -S \cup S \right)$ to $W$.
    Lastly, the factor $\tensor*[_2]{(\lambda)}{_{i+j+k-t-s}}$ is the number of ways to assign colours from $C_{\lambda}$ to $\kappa(V_1) \cup \kappa(V_2) \cup \left( \kappa(V_3) \backslash W \right)$.
    Note that the choices of $\kappa(V_1) \cup \kappa(V_2)$ can be determined by the set $\left( \kappa(V_1) \backslash S \right) \cup \kappa(V_2)$.
    Therefore, we conclude that $\mathsf E(\Sigma, x) = H_2(l,m,n,x)$.
    The case where $\lambda$ is odd is similar to that of the proof of Proposition~\ref{prop:lmn1}.
\end{proof}

Let $n$ and $k$ be nonnegative integers such that $n \geqslant 2k$.
Denote by $T(n,k)$ the number of matchings of size $k$ in the unsigned complete graph $K_n$ \cite[Sequence A100861]{oeis}.
We have the formula $\displaystyle T(n,k) = \frac{(n)_{2k}}{2^k \cdot k!}$.

Let $l, m, n$ be integers.
Define $ H_3(l, m, n, x)$ to be
\begin{align*}
    & \sum_{i=0}^{\min(l,m)} \sum_{j=0}^{\left\lfloor\frac{l-i}{2} \right\rfloor} \sum_{k=0}^{\left\lfloor\frac{m-i}{2} \right\rfloor} i! \binom{l}{i} \binom{m}{i} \cdot T(l-i,j) \cdot T(m-i,k) \cdot \tensor*[_2]{(x)}{_{l+m-i-j-k}} \cdot (x-l-m+i)_n
\end{align*}
if $l, m, n \geqslant 0$ and $H_3(l, m, n,x) \coloneqq 0$ if $l<0$, $m<0$, or $n<0$.

\begin{prop}
\label{prop:lmn3}
    Let $\Sigma$ be the signed graph $\left( (+K_l) \veeminus (+K_m) \right) \veeplus (+K_n)$ for some nonnegative integers $l$, $m$, and $n$.
    Then 
    \[
    \mathscr C(\Sigma, x) = \left( H_3(l,m,n,x), \; \mathsf E(\Sigma, x-1) + \hat H_3(l, m, n, x) \right),
    \]
    where $\hat H_3(l, m, n, x) \coloneqq l \cdot H_3(l-1, m, n, x-1) + m \cdot H_3(l, m-1, n, x-1) + n \cdot H_3(l, m, n-1, x-1)$.
\end{prop}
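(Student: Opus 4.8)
The plan is to follow the template of the proofs of Propositions~\ref{prop:lmn1} and~\ref{prop:lmn2}. I would first establish the even part, $\mathsf E(\Sigma, x) = H_3(l,m,n,x)$, by counting proper $C_\lambda$-colourings of $\Sigma$ for an arbitrary even integer $\lambda \geqslant 0$; since $H_3(l,m,n,x)$ and $\mathsf E(\Sigma,x)$ are polynomials that then agree at infinitely many points, they coincide. The odd part would follow by the dominating-vertex deletion argument used at the end of the proof of Proposition~\ref{prop:lmn1}.

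For the even part, let $V_1$, $V_2$, $V_3$ be the vertex sets of the copies of $+K_l$, $+K_m$, $+K_n$, and fix a proper $C_\lambda$-colouring $\kappa$ with $\lambda$ even, so $0 \notin C_\lambda$. Since each of $+K_l$, $+K_m$, $+K_n$ is all-positive complete, $\kappa$ is injective on each $V_t$, so $|\kappa(V_1)| = l$, $|\kappa(V_2)| = m$, $|\kappa(V_3)| = n$; the all-positive join to $V_3$ forces $\kappa(V_3)$ to be disjoint from $\kappa(V_1)$ and from $\kappa(V_2)$; and the all-negative join between $V_1$ and $V_2$ forces $\kappa(V_1) \cap (-\kappa(V_2)) = \varnothing$. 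The crucial step is to classify, for each antipodal pair $\{c,-c\} \subseteq C_\lambda$, the possibilities for $(\{c,-c\} \cap \kappa(V_1),\ \{c,-c\} \cap \kappa(V_2))$: the negative-join constraint rules out the ``crossing'' configurations, so up to interchanging $c$ and $-c$ the pair either misses $\kappa(V_1) \cup \kappa(V_2)$ entirely, or meets it as a \emph{$V_1$-pair} ($\{c,-c\} \subseteq \kappa(V_1)$, disjoint from $\kappa(V_2)$), a \emph{$V_2$-pair} (the symmetric option), a \emph{$V_1$-singleton} ($\kappa(V_1) \cap \{c,-c\} = \{c\}$, $\kappa(V_2) \cap \{c,-c\} = \varnothing$), a \emph{$V_2$-singleton} (symmetric), or a \emph{shared colour} ($\kappa(V_1) \cap \{c,-c\} = \kappa(V_2) \cap \{c,-c\} = \{c\}$); in the last three cases the antipode $-c$ of the used colour lies outside $\kappa(V_1) \cup \kappa(V_2)$.

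Granting this classification, the count is essentially forced. Let $i$ be the number of shared colours, $j$ the number of $V_1$-pairs and $k$ the number of $V_2$-pairs, so $0 \leqslant i \leqslant \min(l,m)$, $0 \leqslant j \leqslant \lfloor (l-i)/2 \rfloor$, $0 \leqslant k \leqslant \lfloor (m-i)/2 \rfloor$. Choosing which $i$ vertices of $V_1$ and which $i$ of $V_2$ carry shared colours, together with the bijection matching them up, contributes $\binom{l}{i} \binom{m}{i} i!$; a size-$j$ matching on the remaining $l-i$ vertices of $V_1$ and a size-$k$ matching on the remaining $m-i$ vertices of $V_2$ (whose blocks receive antipodal colours) contribute $T(l-i,j) T(m-i,k)$. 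There are then $l+m-i-j-k$ colour ``slots'' (the shared, pair, and singleton blocks), and filling them consumes that many distinct antipodal pairs of $C_\lambda$: once $p$ pairs have been used, a shared or singleton slot can be filled in $\lambda - 2p$ ways, and a pair slot also in $\lambda - 2p$ ways (choose the antipodal pair, then orient it across the two vertices of the block), so the total is $\tensor*[_2]{(\lambda)}{_{l+m-i-j-k}}$. Finally $|\kappa(V_1) \cup \kappa(V_2)| = l+m-i$, so the $n$ vertices of $V_3$ receive an ordered choice of distinct colours from the remaining $\lambda - l - m + i$ colours, contributing $(\lambda - l - m + i)_n$. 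Summing over $i,j,k$ gives $f(\Sigma,\lambda) = H_3(l,m,n,\lambda)$, hence $\mathsf E(\Sigma,x) = H_3(l,m,n,x)$. For the odd part, with $\lambda$ odd, at most one vertex of the underlying complete graph $K_{l+m+n}$ can receive the colour $0$; the colourings avoiding $0$ number $\mathsf E(\Sigma,\lambda-1)$, while those using $0$ at a vertex $v$ split according to $v \in V_1$, $V_2$, or $V_3$, where $\Sigma - v$ is again of the form $((+K_{l'}) \veeminus (+K_{m'})) \veeplus (+K_{n'})$ with exactly one index decremented, so by the even case these contribute $l\,H_3(l-1,m,n,\lambda-1) + m\,H_3(l,m-1,n,\lambda-1) + n\,H_3(l,m,n-1,\lambda-1) = \hat H_3(l,m,n,\lambda)$; thus $\mathsf O(\Sigma,x) = \mathsf E(\Sigma,x-1) + \hat H_3(l,m,n,x)$.

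The main obstacle I anticipate is the even-case bookkeeping: verifying that the six configurations for $(\{c,-c\} \cap \kappa(V_1),\ \{c,-c\} \cap \kappa(V_2))$ are exhaustive, that the assignment of a proper colouring to its structural data plus colour data is a genuine bijection (injectivity of $\kappa$ on each $V_t$ is what makes the matchings well-defined, and one must check that reconstructing a colouring from arbitrary such data always yields a \emph{proper} colouring), and that the orientation freedom within each pair block is accounted for exactly once by the factor $\tensor*[_2]{(\lambda)}{_{\bullet}}$ rather than producing a spurious power of $2$. The odd-case step needs no new idea once the argument from Proposition~\ref{prop:lmn1} is reused.
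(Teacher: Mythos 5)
Your proposal is correct and follows essentially the same route as the paper: the same decomposition of a proper colouring by the parameters $i = |\kappa(V_1)\cap\kappa(V_2)|$, $2j = |{-\kappa(V_1)}\cap\kappa(V_1)|$, $2k = |{-\kappa(V_2)}\cap\kappa(V_2)|$, the same factors $i!\binom{l}{i}\binom{m}{i}\,T(l-i,j)\,T(m-i,k)\cdot\tensor*[_2]{(\lambda)}{_{l+m-i-j-k}}\cdot(\lambda-l-m+i)_n$, and the same zero-colour case analysis for odd $\lambda$. Your explicit classification of antipodal pairs is just a more detailed justification of the bookkeeping the paper states directly.
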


\begin{proof}
    Suppose that $V (\left\vert \Sigma \right\vert) = V_1 \cup V_2 \cup V_3$ where $V_1 = V (\left\vert +K_l \right\vert)$, $V_2 = V (\left\vert +K_m \right\vert)$, and $V_3 = V (\left\vert +K_n \right\vert)$ are disjoint sets.
    Let $\lambda \geqslant 0$ be an integer and let \(\kappa : V (\left\vert \Sigma \right\vert) \to C_{\lambda}\) be a proper \(C_{\lambda}\)-colouring of \(\Sigma\).
    First, suppose that $\lambda$ is even.
    Due to the all-positive join, the sets $\kappa(V_1) \cup \kappa(V_2)$ and $\kappa(V_3)$ are disjoint.
    Moreover, the all-negative join implies that $-\kappa(V_1) \cap \kappa(V_2) = \kappa(V_1) \cap -\kappa(V_2) = \varnothing$.
    Note that $\left\vert \kappa(V_1) \right\vert = l$, $\left\vert \kappa(V_2) \right\vert = m$, and $\left\vert \kappa(V_3) \right\vert = n$.
    Let $S = \kappa(V_1) \cap \kappa(V_2)$ and suppose that $\vert S \vert = i$ where $0 \leq i \leq \min(l,m)$.
    Let $W_1 = -\kappa(V_1) \cap \kappa(V_1)$ and suppose that $\vert W_1 \vert = 2j$ where $0 \leqslant j \leqslant \lfloor \frac{l-i}{2} \rfloor$.
    Similarly, let $W_2 = -\kappa(V_2) \cap \kappa(V_2)$ and suppose that $\vert W_2 \vert = 2k$ where $0 \leqslant k \leqslant \lfloor \frac{m-i}{2} \rfloor$.
    We have that $-W_1 = W_1$, $-W_2 = W_2$, and $S \cap W_1 = S \cap W_2 = \varnothing$.
    Next, we claim that $\mathsf E(\Sigma, \lambda) = H_3(l, m, n, \lambda)$.
    Indeed, the factor $\displaystyle i! \binom{l}{i} \binom{m}{i}$ is the number of choices for $S$ in $\kappa(V_1)$ and $\kappa(V_2)$.
    The factor $T(l-i,j)$ is the number of ways to choose $W_1$ from $\kappa(V_1) \backslash S$, while $T(m-i,k)$ is the number of ways to choose $W_2$ from $\kappa(V_2) \backslash S$.
    The factor $\tensor*[_2]{(\lambda)}{_{l+m-i-j-k}}$ is the number of ways to assign colours from $C_{\lambda}$ to $\kappa(V_1)$ and $\kappa(V_2)$.
    The choices of $\kappa(V_1) \cup \kappa(V_2)$ can be determined by $S$, half of $W_1$, half of $W_2$, $\kappa(V_1) \backslash \left( S \cup W_1 \right)$, and $\kappa(V_2) \backslash \left( S \cup W_2 \right)$.
    Altogether, we have $l+m-i-j-k$ spots to allocate the colours from $C_{\lambda}$.
    Lastly, since $\left\vert \kappa(V_1) \cup \kappa(V_2) \right\vert = l+m-i$, the factor $(\lambda-l-m+i)_n$ is the number of ways to assign the remaining available colours in $C_{\lambda}$ to $\kappa(V_3)$.
    Therefore, we conclude that $\mathsf E(\Sigma, x) = H_3(l,m,n,x)$.
    The case where $\lambda$ is odd is similar to that of the proof of Proposition~\ref{prop:lmn1}.
\end{proof}

Let $i,j,k,l, m, n, s, t$ be integers such that $l+m+s-i-j-k \geqslant t \geqslant 0$.
Define 
\[
U_x(i,j,k,l,m,s,t) \coloneqq \tensor*[_2]{(x)}{_{l+m+s-i-j-k-t}} \cdot (l+m-i-2j-2k)_t
\]
and define $H_4(l, m, n, x)$ as 
\begin{align*}
    & \sum_{i=0}^{\min(l,m)}
    \sum_{j=0}^{\left\lfloor\frac{l-i}{2} \right\rfloor}
    \sum_{k=0}^{\left\lfloor\frac{m-i}{2} \right\rfloor}
    \sum_{s=0}^n \sum_{t=0}^s i! \binom{l}{i} \binom{m}{i} \binom{s}{t} \stirling{n}{s} \cdot T(l-i,j) \cdot T(m-i,k) \cdot U_{x}(i,j,k,l,m,s,t)
\end{align*}
if $l, m, n \geqslant 0$ and $H_4(l, m, n,x) \coloneqq 0$ if $l<0$, $m<0$, or $n<0$.

\begin{prop}
\label{prop:lmn4}
    Let $\Sigma$ be the signed graph $\left( (+K_l) \veeminus (+K_m) \right) \veeplus (-K_n)$ for some nonnegative integers $l$, $m$, and $n$.
    Then 
    \[
    \mathscr C(\Sigma, x) = \left( H_4(l,m,n,x), \; \mathsf E(\Sigma, x-1) + \hat H_4(l, m, n, x) \right),
    \]
    where $\hat H_4(l, m, n, x) \coloneqq l \cdot H_4(l-1, m, n, x-1) + m \cdot H_4(l, m-1, n, x-1) + n \cdot H_4(l, m, n-1, x-1)$.
\end{prop}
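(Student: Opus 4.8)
The plan is to follow the proof of Proposition~\ref{prop:lmn3} closely, replacing the all-positive clique $+K_n$ by the all-negative clique $-K_n$ and absorbing the extra freedom coming from $-K_n$ into two new summation indices, and then to deduce the odd-colour statement from the even one exactly as in Proposition~\ref{prop:lmn1}. Fix an even integer $\lambda \geqslant 0$, write $V(\left\vert\Sigma\right\vert) = V_1\cup V_2\cup V_3$ with $V_1 = V(\left\vert +K_l\right\vert)$, $V_2 = V(\left\vert +K_m\right\vert)$, $V_3 = V(\left\vert -K_n\right\vert)$, and let $\kappa\colon V(\left\vert\Sigma\right\vert)\to C_\lambda$ be a proper $C_\lambda$-colouring. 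The adjacencies of $\Sigma$ force: $\kappa$ is injective on $V_1$ and on $V_2$, so $\left\vert\kappa(V_1)\right\vert = l$ and $\left\vert\kappa(V_2)\right\vert = m$; the all-negative join of $V_1$ and $V_2$ gives $\kappa(V_1)\cap{-\kappa(V_2)} = \varnothing$; the all-positive join of $V_1\cup V_2$ with $V_3$ gives $\kappa(V_3)\cap(\kappa(V_1)\cup\kappa(V_2)) = \varnothing$; and $-K_n$ gives $\kappa(V_3)\cap{-\kappa(V_3)} = \varnothing$, so $\kappa$ partitions $V_3$ into $s\coloneqq\left\vert\kappa(V_3)\right\vert$ nonempty monochromatic blocks.

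Next I would introduce the statistics $S\coloneqq\kappa(V_1)\cap\kappa(V_2)$ with $i\coloneqq\left\vert S\right\vert$, $W_1\coloneqq\kappa(V_1)\cap{-\kappa(V_1)}$ with $2j\coloneqq\left\vert W_1\right\vert$, $W_2\coloneqq\kappa(V_2)\cap{-\kappa(V_2)}$ with $2k\coloneqq\left\vert W_2\right\vert$, and $W_3\coloneqq\kappa(V_3)\cap\bigl({-\kappa(V_1)}\cup{-\kappa(V_2)}\bigr)$ with $t\coloneqq\left\vert W_3\right\vert$. Routine checks give $S$, $W_1$, $W_2$ pairwise disjoint, $-W_1 = W_1$, $-W_2 = W_2$, $W_1\cap\kappa(V_2) = W_2\cap\kappa(V_1) = \varnothing$, and the ranges $0\leqslant i\leqslant\min(l,m)$, $0\leqslant j\leqslant\lfloor(l-i)/2\rfloor$, $0\leqslant k\leqslant\lfloor(m-i)/2\rfloor$, $0\leqslant s\leqslant n$, $0\leqslant t\leqslant s$. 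The one genuinely new structural point is that, because $\kappa(V_3)$ is disjoint from $\kappa(V_1)\cup\kappa(V_2)$ while $W_1\cup W_2$ is a symmetric subset of $\kappa(V_1)\cup\kappa(V_2)$, every colour of $W_3$ is the negative of a colour of $(\kappa(V_1)\cup\kappa(V_2))\setminus(W_1\cup W_2)$, a set of size $l+m-i-2j-2k$.

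Then I would count, for each fixed tuple $(i,j,k,s,t)$, the colourings realising it, reading off the summand of $H_4$: the factor $i!\binom{l}{i}\binom{m}{i}$ is the number of ways to match $i$ vertices of $V_1$ with $i$ vertices of $V_2$, namely those sharing an $S$-colour; $T(l-i,j)$ and $T(m-i,k)$ count the matchings among the remaining vertices of $V_1$ and of $V_2$ carrying the $\pm$-pairs $W_1$ and $W_2$; $\stirling{n}{s}$ counts the partition of $V_3$; $\binom{s}{t}$ picks the $t$ blocks of $V_3$ coloured from $W_3$; $(l+m-i-2j-2k)_t$ assigns those blocks, injectively, the negatives of distinct colours of $(\kappa(V_1)\cup\kappa(V_2))\setminus(W_1\cup W_2)$; and $\tensor*[_2]{(\lambda)}{_{l+m+s-i-j-k-t}}$ fills the remaining free colour slots --- the $i$ colours of $S$, one half of each of $W_1$ and $W_2$, the private colours of $V_1$ and $V_2$, and the $s-t$ blocks of $V_3$ outside $W_3$ --- with colours no two of which are equal or negatives of one another. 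Summing yields $\mathsf E(\Sigma,\lambda) = H_4(l,m,n,\lambda)$ for every even $\lambda$, hence $\mathsf E(\Sigma,x) = H_4(l,m,n,x)$; the conventions $H_4 = 0$ on negative arguments, $0^0 = 1$, and the empty-product and $K_0$-join conventions cover the degenerate cases. For odd $\lambda$ one argues exactly as in Proposition~\ref{prop:lmn1}: at most one vertex is coloured $0$; the colourings avoiding $0$ number $\mathsf E(\Sigma,\lambda-1)$ since $C_\lambda\setminus\{0\} = C_{\lambda-1}$; and since the $0$-coloured vertex is adjacent to every other vertex, with each such edge forbidding colour $0$ at its other endpoint, deleting it from $V_1$, $V_2$, or $V_3$ leaves a proper $(C_\lambda\setminus\{0\})$-colouring of $((+K_{l-1})\veeminus(+K_m))\veeplus(-K_n)$, of $((+K_l)\veeminus(+K_{m-1}))\veeplus(-K_n)$, or of $((+K_l)\veeminus(+K_m))\veeplus(-K_{n-1})$, respectively, which yields the three terms of $\hat H_4(l,m,n,x)$ and hence $\mathsf O(\Sigma,x) = \mathsf E(\Sigma,x-1) + \hat H_4(l,m,n,x)$.

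The main obstacle is the middle step: verifying that the displayed product counts the proper $C_\lambda$-colourings realising a given tuple $(i,j,k,s,t)$ exactly once each, with no omissions. The delicate points are (i) that $W_3$ is forced to lie in $-\bigl((\kappa(V_1)\cup\kappa(V_2))\setminus(W_1\cup W_2)\bigr)$, so the relevant factor is the falling factorial $(l+m-i-2j-2k)_t$ and not a larger one, and (ii) that each of the $s-t$ blocks of $V_3$ lying outside $W_3$ consumes a genuinely fresh $\pm$-pair of colours, so that the single factor $\tensor*[_2]{(\lambda)}{_{l+m+s-i-j-k-t}}$ --- rather than a product of two $\tensor*[_2]{}$-type factors --- correctly records all the remaining colour freedom without double counting.
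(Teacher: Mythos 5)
Your proposal is correct and follows essentially the same route as the paper's proof: the same decomposition of a proper $C_\lambda$-colouring by the statistics $S$, $W_1$, $W_2$, the partition of $V_3$, and the set $W_3$ (called $X$ in the paper), with the same justification of each factor of $H_4$, and the same reduction of the odd case to the zero-coloured-vertex argument of Proposition~\ref{prop:lmn1}. The only difference is that you spell out the odd case explicitly, which the paper merely cites as analogous.
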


\begin{proof}
    Suppose that $V (\left\vert \Sigma \right\vert) = V_1 \cup V_2 \cup V_3$ where $V_1 = V (\left\vert +K_l \right\vert)$, $V_2 = V (\left\vert +K_m \right\vert)$, and $V_3 = V (\left\vert -K_n \right\vert)$ are disjoint sets.
    Let $\lambda \geqslant 0$ be an integer and let \(\kappa : V (\left\vert \Sigma \right\vert) \to C_{\lambda}\) be a proper \(C_{\lambda}\)-colouring of \(\Sigma\).
    First, suppose that $\lambda$ is even.
    Due to the all-positive join, the sets $\kappa(V_1) \cup \kappa(V_2)$ and $\kappa(V_3)$ are disjoint.
    Moreover, the all-negative join implies that $-\kappa(V_1) \cap \kappa(V_2) = \kappa(V_1) \cap -\kappa(V_2) = \varnothing$.
    Note that $\left\vert \kappa(V_1) \right\vert = l$ and $\left\vert \kappa(V_2) \right\vert = m$.
    Let $S = \kappa(V_1) \cap \kappa(V_2)$ and suppose that $\vert S \vert = i$ where $0 \leqslant i \leqslant \min(l,m)$.
    Let $W_1 = -\kappa(V_1) \cap \kappa(V_1)$ and suppose that $\vert W_1 \vert = 2j$ where $0 \leqslant j \leqslant \lfloor \frac{l-i}{2} \rfloor$.
    Similarly, let $W_2 = -\kappa(V_2) \cap \kappa(V_2)$ and suppose that $\vert W_2 \vert = 2k$ where $0 \leqslant k \leqslant \lfloor \frac{m-i}{2} \rfloor$.
    We have that $-W_1 = W_1$, $-W_2 = W_2$, and $S \cap W_1 = S \cap W_2 = \varnothing$.
    Next, we have $-\kappa(V_3) \cap \kappa(V_3) = \varnothing$ and suppose that $\left\vert \kappa(V_3) \right\vert = s$ where $0 \leqslant s \leqslant n$.
    Then $\kappa$ induces a partition of $V_3$ into $s$ disjoint nonempty subsets.
    Let $X = \kappa(V_3) \cap \left( -\kappa(V_1) \cup -\kappa(V_2) \right)$ and suppose that $\vert X \vert = t$ where $0 \leqslant t \leqslant s$.
    The elements of $X$ are the additive inverses of some of the elements of $\kappa(V_1) \cup \kappa(V_2)$.
    Additionally, we also have that $X \cap \left( W_1 \cup W_2 \right) = \varnothing$.
    Next, we claim that $\mathsf E(\Sigma, \lambda) = H_4(l, m, n, \lambda)$.
    Indeed, the factor $\displaystyle i! \binom{l}{i} \binom{m}{i}$ is the number of choices for $S$ in $\kappa(V_1)$ and $\kappa(V_2)$.
    The factor $T(l-i,j)$ is the number of ways to choose $W_1$ from $\kappa(V_1) \backslash S$, while $T(m-i,k)$ is the number of ways to choose $W_2$ from $\kappa(V_2) \backslash S$.
    The factor $\displaystyle \stirling{n}{s}$ is the number of possible partitions of $V_3$ as described above, and $\displaystyle \binom{s}{t}$ comes from choosing $t$ elements of $X$ from $\kappa(V_3)$.
    Since $$\left\vert \left( -\kappa(V_1) \cup -\kappa(V_2) \right) \backslash \left( W_1 \cup W_2 \right) \right\vert = l+m-i-2j-2k,$$ the factor $(l+m-i-2j-2k)_t$ is the number of ways to assign colours from the set $\left( -\kappa(V_1) \cup -\kappa(V_2) \right) \backslash \left( W_1 \cup W_2 \right)$ to $X$.
    Lastly, the factor $\tensor*[_2]{(\lambda)}{_{l+m+s-i-j-k-t}}$ is the number of ways to assign colours from $C_{\lambda}$ to $\kappa(V_1) \cup \kappa(V_2) \cup \left( \kappa(V_3) \backslash X \right)$.
    The choices of $\kappa(V_1) \cup \kappa(V_2)$ can be determined by $S$, half of $W_1$, half of $W_2$, $\kappa(V_1) \backslash \left( S \cup W_1 \right)$, and $\kappa(V_2) \backslash \left( S \cup W_2 \right)$.
    Altogether, we have $l+m+s-i-j-k-t$ spots to allocate the colours from $C_{\lambda}$.
    Therefore, we conclude that $\mathsf E(\Sigma, x) = H_4(l,m,n,x)$.
    The case where $\lambda$ is odd is similar to that of the proof of Proposition~\ref{prop:lmn1}.
\end{proof}

We can apply the propositions above to obtain various identities involving the functions $H$, $H_1$, $H_2$, $H_3$, and $H_4$.
We collect a sample of such identities below, which may be of independent interest.
Let $l$, $m$, and $n$ be nonnegative integers.

\begin{enumerate}[label=\roman*.]
    \item Since $(-K_l) \veeplus (+K_m) \veeplus (-K_n) = (-K_n) \veeplus (+K_m) \veeplus (-K_l)$, by Proposition~\ref{prop:lmn1}, we have that $H_1(l,m,n,x) = H_1(n,m,l,x)$.
    Furthermore, $H_1(l,m,n,x) = H_1(n,m,l,x)$ for all integers $l$, $m$, and $n$.
    
    \item Similarly, by Proposition~\ref{prop:lmn2}, we have that
    \begin{align*}
        & H_2(l,m,n,x) = H_2(l,n,m,x) = H_2(m,l,n,x) = \\
        & H_2(m,n,l,x) = H_2(n,l,m,x) = H_2(n,m,l,x)
    \end{align*}
    for all integers $l$, $m$, and $n$.

    \item Since $(-K_l) \veeplus (+K_1) \veeplus (-K_n) = (-K_l) \veeplus (-K_1) \veeplus (-K_n)$, by Proposition~\ref{prop:lmn1} and Proposition~\ref{prop:lmn2}, we have that $H_1(l,1,n,x) = H_2(l,1,n,x)$ for all integers $l$ and $n$. 
    
    \item Let $q$ and $r$ be nonnegative integers satisfying $q+r=n$.
    The signed graphs $-K_n$ and $(-K_q) \veeplus (-K_r)$ are switching isomorphic.
    Hence, by Proposition~\ref{prop:minKn}, Proposition~\ref{prop:lmn1}, and Proposition~\ref{prop:lmn2}, we have $H_1(q, 0, r, x) = H_2(q, 0, r, x) = H(n, x)$.
    In particular, we obtain the identity
    \begin{align*}
        \sum_{i=0}^n \stirling{n}{i} \cdot \tensor*[_2]{(x)}{_i} = \sum_{i=0}^q \sum_{k=0}^r \sum_{t=0}^k \binom{k}{t} \stirling{q}{i} \stirling{r}{k} \cdot \tensor*[_2]{(x)}{_{i+k-t}} \cdot (i)_t.
    \end{align*}
    
    \item Since $\left( (+K_l) \veeminus (+K_m) \right) \veeplus (-K_n) = \left( (+K_m) \veeminus (+K_l) \right) \veeplus (-K_n)$, by Proposition~\ref{prop:lmn4}, we have that $H_4(l,m,n,x) = H_4(m,l,n,x)$ for all integers $l$, $m$, and $n$.
    This equality can also be derived directly from the definition of $H_4$.
    
    \item Note that the signed graphs $\left( (+K_l) \veeminus (+K_m) \right) \veeplus (+K_n)$ and $\left( (+K_l) \veeminus (+K_n) \right) \veeplus (+K_m)$ are switching isomorphic.
    Therefore, by Proposition~\ref{prop:lmn3}, we have that
    \begin{align*}
        & H_3(l,m,n,x) = H_3(l,n,m,x) = H_3(m,l,n,x) =  \\
        & H_3(m,n,l,x) = H_3(n,l,m,x) = H_3(n,m,l,x)
    \end{align*}
    for all integers $l$, $m$, and $n$.

    \item Since $\left( (+K_l) \veeminus (+K_m) \right) \veeplus (+K_1) = \left( (+K_l) \veeminus (+K_m) \right) \veeplus (-K_1)$, by Proposition~\ref{prop:lmn3} and Proposition~\ref{prop:lmn4}, we have that $H_3(l,m,1,x) = H_4(l,m,1,x)$ for all integers $l$ and $m$.
    
    \item Let $q$ and $r$ be nonnegative integers satisfying $q+r=n$.
    The signed graph $+K_n$ is switching isomorphic to $(+K_q) \veeminus (+K_r)$.
    Hence, by Proposition~\ref{prop:lmn3} and Proposition~\ref{prop:lmn4}, we have $H_3(q,r,0,x) = H_4(q,r,0,x) = (x)_n$, which yields the following identity:
    \begin{align*}
        (x)_n = \sum_{i=0}^{\min(q,r)} \sum_{j=0}^{\left\lfloor\frac{q-i}{2} \right\rfloor} \sum_{k=0}^{\left\lfloor\frac{r-i}{2} \right\rfloor} i! \binom{q}{i} \binom{r}{i} \cdot T(q-i,j) \cdot T(r-i,k) \cdot \tensor*[_2]{(x)}{_{n-i-j-k}}.
    \end{align*}
    In particular, we have $\displaystyle (x)_n = \sum_{j=0}^{\lfloor n/2 \rfloor} T(n,j) \cdot \tensor*[_2]{(x)}{_{n-j}}$.
    
    \item Let $n \geqslant 1$.
    Observe that $(+K_m) \veeplus (-K_n)$ and $\left( (+K_1) \veeminus (+K_m) \right) \veeplus (-K_{n-1})$ are switching isomorphic.
    Therefore, by Corollary~\ref{cor:mn} and Proposition~\ref{prop:lmn4}, we deduce that $H_1(0,m,n,x) = H_4(1,m,n-1,x)$.
\end{enumerate}

\section{Bivariate chromatic polynomials and dominating-vertex deletion formulae}
\label{sec:bivar}

In this section, we define a generalisation of the chromatic polynomials of a signed graph, which we call the \emph{bivariate chromatic polynomials}.

\subsection{Bivariate chromatic polynomials}
\label{subsec:bivarchrom}

We begin by introducing the colouring of signed graphs in a more general setting.

\begin{dfn}
\label{dfn:bivarcolourset}
    Let $\lambda$ and $\mu$ be integers such that $\lambda \geqslant \mu \geqslant 0$ and let $C$ be a finite subset of $\mathbb{Z}$.
    We call $C$ a \textbf{$(\lambda, \mu)$-colour set} containing a \textbf{paired-colour set} $P$ and an \textbf{unpaired-colour set} $U$ if there exist disjoint subsets $P, U$ of $\mathbb{Z}^*$ such that $-P = P$, $|U| = \mu$, $-U \cap U = \varnothing$, and one of the following holds:
    \begin{enumerate}
        \item $\lambda-\mu$ is even, $|P| = \lambda-\mu$, and $C = P \cup U$.
        \item $\lambda-\mu$ is odd, $|P| = \lambda-\mu-1$, and $C = P \cup U \cup \{0\}$.
    \end{enumerate}
\end{dfn}

\begin{ex}
\label{ex:bivarcolourset}
    The empty set $\varnothing$ is the unique $(0,0)$-colour set and the singleton $\{0\}$ is the unique $(1,0)$-colour set.
    The set $\{-7, -6, -4, -1, 1, 3, 4, 7, 8\}$ is a $(9,3)$-colour set containing a paired-colour set $\{\pm 1, \pm 4, \pm 7\}$ and an unpaired-colour set $\{-6, 3, 8\}$.
    The set $\{-5, -2, -1, 0, 2, 3, 5\}$ is a $(7,2)$-colour set containing a paired-colour set $\{\pm 2, \pm 5\}$ and an unpaired-colour set $\{-1, 3\}$.
\end{ex}

Let $\Sigma = (\Gamma,\sigma)$ be a signed graph.
Given integers $\lambda$ and $\mu$ such that $\lambda \geqslant \mu \geqslant 0$, let $C_1$ be a $(\lambda, \mu)$-colour set containing a paired-colour set $P_1$ and an unpaired-colour set $U_1$.
Similarly, let $C_2$ be a $(\lambda, \mu)$-colour set containing a paired-colour set $P_2$ and an unpaired-colour set $U_2$.
We can construct a bijection $\alpha: C_1 \to C_2$ such that $\alpha(-c) = -\alpha(c)$ for all $c \in P_1$, $\alpha(U_1) = U_2$, and $\alpha(0) = 0$ if $\lambda-\mu$ is odd.
Observe that $\kappa$ is a proper $C_1$-colouring of $\Sigma$ if and only if $\alpha \circ \kappa$ is a proper $C_2$-colouring of $\Sigma$.
This implies that $f(\Sigma, \lambda, \mu)$ defined below depends only on $\Sigma$, $\lambda$, and $\mu$; it does not depend on the choice of $(\lambda, \mu)$-colour set.

\begin{dfn}
\label{dfn:bivarchrom}
    Let $\Sigma$ be a signed graph.
    Let $\lambda$ and $\mu$ be integers such that $\lambda \geqslant \mu \geqslant 0$ and let $C$ be a $(\lambda, \mu)$-colour set.
    Define $f(\Sigma, \lambda, \mu)$ to be the number of proper $C$-colourings of $\Sigma$.
\end{dfn}

Denote by $p(\Sigma)$ the number of all-positive connected components of $\Sigma$.
We can determine $f(\Sigma, \lambda, \mu)$ by using Theorem~\ref{thm:iechrombivar} below.

\begin{thm}
\label{thm:iechrombivar}
    Let $\Sigma = (\Gamma,\sigma)$ be a signed graph and let $\lambda$ and $\mu$ be integers such that $\lambda \geqslant \mu \geqslant 0$.
    Then
    \begin{equation*}
        f(\Sigma, \lambda, \mu) = 
            \displaystyle \sum_{i=0}^{\left\vert E(\Gamma) \right\vert} (-1)^i \sum_{\substack{Y \subseteq E(\Gamma) \vspace{0.7 pt} \\ |Y|=i}} \lambda^{p(\Sigma \vert Y)} \cdot (\lambda-\mu)^{b(\Sigma \vert Y)-p(\Sigma \vert Y)} \cdot \delta^{c(\Sigma \vert Y)-b(\Sigma \vert Y)}, 
    \end{equation*}
    where $\delta \in \{0,1\}$ is congruent to $\lambda-\mu$ modulo $2$.
\end{thm}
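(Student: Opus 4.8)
The plan is to prove the formula by inclusion–exclusion over the subsets of $E(\Gamma)$, following the same pattern as the proof of Theorem~\ref{thm:iechrom} but tracking the finer trichotomy of connected components: all-positive, balanced-but-not-all-positive, and unbalanced. I would first fix a $(\lambda,\mu)$-colour set $C$ containing a paired-colour set $P$ and an unpaired-colour set $U$; since $f(\Sigma,\lambda,\mu)$ does not depend on this choice, any such $C$ will serve. From the defining properties $-P=P$, $P\cap U=\varnothing$, $-U\cap U=\varnothing$, $|U|=\mu$ one extracts the two numerical facts that govern the whole computation: $|C|=\lambda$, and $C\cap -C$ equals $P$ (when $\lambda-\mu$ is even) or $P\cup\{0\}$ (when $\lambda-\mu$ is odd), so that in both cases $|C\cap -C|=\lambda-\mu$; moreover $0\in C$ if and only if $\delta=1$.

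For an edge $e=\{v,w\}$ let $A_e$ denote the set of maps $\kappa\colon V(\Gamma)\to C$ with $\kappa(v)=\sigma(e)\kappa(w)$; this condition is symmetric in $v$ and $w$ because $\sigma(e)=\pm1$. A proper $C$-colouring of $\Sigma$ is exactly a map avoiding every $A_e$, so inclusion–exclusion gives
\[
f(\Sigma,\lambda,\mu)=\sum_{Y\subseteq E(\Gamma)}(-1)^{|Y|}\,N(Y),\qquad N(Y):=\Bigl|\bigcap_{e\in Y}A_e\Bigr|,
\]
where $N(Y)$ counts the maps $\kappa\colon V(\Gamma)\to C$ satisfying $\kappa(v)=\sigma(e)\kappa(w)$ for every $e=\{v,w\}\in Y$. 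Such equalities relate only vertices lying in a common connected component of the spanning subgraph $\Sigma\vert Y$, so $N(Y)=\prod_{\Lambda}n(\Lambda)$, the product running over the connected components $\Lambda$ of $\Sigma\vert Y$, where $n(\Lambda)$ is the number of maps $V(\Lambda)\to C$ consistent with the edge-equalities inside $\Lambda$.

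The core of the argument is to evaluate $n(\Lambda)$ by type of $\Lambda$. I would first note that for any edge $\{v,w\}$ of $\Lambda$ we have $|\kappa(v)|=|\kappa(w)|$, so $|\kappa|$ is constant on the connected graph $\Lambda$. If $\Lambda$ is unbalanced, no switching function exists, so a consistent $\kappa$ with $\kappa\not\equiv 0$ is nowhere zero and, after fixing any vertex $r$, the map $\theta(v):=\kappa(v)/\kappa(r)\in\{\pm1\}$ would be a switching function of $\Lambda$ — a contradiction; hence $\kappa\equiv 0$, and $n(\Lambda)=\delta$ (one such colouring if $0\in C$, none otherwise). If $\Lambda$ is balanced, it admits a switching function $\theta\colon V(\Lambda)\to\{\pm1\}$ with $\sigma(\{v,w\})=\theta(v)\theta(w)$, unique up to a global sign; fixing one $\theta$, the consistent colourings are precisely the maps $\kappa(v)=\theta(v)c$ as $c$ ranges over $\bigcap_{v\in V(\Lambda)}\theta(v)C$, and distinct admissible $c$ give distinct $\kappa$. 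When $\Lambda$ is all-positive, $\theta$ is constant and this intersection is $C$, so $n(\Lambda)=\lambda$; when $\Lambda$ is balanced but not all-positive, $\theta$ takes both values, the intersection is $C\cap -C$, and $n(\Lambda)=\lambda-\mu$. Multiplying over components, the all-positive ones contribute $\lambda^{p(\Sigma\vert Y)}$, the remaining balanced ones $(\lambda-\mu)^{\,b(\Sigma\vert Y)-p(\Sigma\vert Y)}$, and the unbalanced ones $\delta^{\,c(\Sigma\vert Y)-b(\Sigma\vert Y)}$ (using $0^0=1$), so $N(Y)$ is exactly the summand in the statement; summing over $Y$ grouped by $|Y|=i$ finishes the proof.

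I expect the only genuinely subtle point to be the balanced case of the component analysis: one must invoke uniqueness-up-to-sign of the switching function to identify ``$\Lambda$ all-positive'' with ``$\theta$ constant'', and then verify carefully that a non-constant $\theta$ cuts the admissible scalars down to exactly $C\cap -C$ and that no colouring is counted twice. The bookkeeping facts $|C|=\lambda$ and $|C\cap -C|=\lambda-\mu$, the symmetry of $A_e$, and the multiplicativity of $N(Y)$ over components are all routine, as is the sanity check that setting $\mu=0$ recovers Theorem~\ref{thm:iechrom} (there $C$ is symmetric and the first two factors collapse to $\lambda^{\,b(\Sigma\vert Y)}$).
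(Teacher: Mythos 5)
Your proposal is correct and follows essentially the same route as the paper's proof: inclusion--exclusion over edge subsets $Y$, multiplicativity of the count over the connected components of $\Sigma\vert Y$, and a case analysis of components into all-positive, balanced-with-a-negative-edge, and unbalanced. Your treatment of the balanced and unbalanced cases via an explicit switching function $\theta$ is somewhat more detailed than the paper's (which simply observes that $\kappa$ is determined by its value at one vertex and that a negative edge forces $\kappa(v)\in C\setminus U$), but the argument is the same in substance.
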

\begin{proof}
    Let $C$ be a $(\lambda, \mu)$-colour set containing unpaired-colour set $U$.
    Let $Y \subseteq E(\Gamma)$ and let $R_Y$ be the set of functions $\kappa : V(\Gamma) \to C$ such that $\kappa(v) = \sigma(\{v,w\}) \kappa(w)$ for all edges $\{v,w\} \in Y$.
    If $Y = \{e\}$, we may write $R_Y$ as $R_e$.
    First, we show that
    \[
    \left\vert R_Y \right\vert = \lambda^{p(\Sigma \vert Y)} \cdot (\lambda-\mu)^{b(\Sigma \vert Y)-p(\Sigma \vert Y)} \cdot \delta^{c(\Sigma \vert Y)-b(\Sigma \vert Y)},
    \]
    where $\delta \in \{0,1\}$ is congruent to $\lambda-\mu$ modulo $2$.
    Let $\kappa \in R_Y$ and let $\Lambda$ be a connected component of the signed graph $\Sigma \vert Y$.
    Let $\rho_{\Lambda}$ be the total number of possible functions $\kappa$ when restricted to the vertex set $V(\vert \Lambda \vert)$.
    We have that $\left\vert R_Y \right\vert = \prod \rho_{\Lambda}$ where the product is taken over all connected components of $\Sigma \vert Y$.
    Note that the function $\kappa$, when restricted to $V(\vert \Lambda \vert)$, is determined by its value on one vertex of $\Lambda$.
    If $\Lambda$ is all-positive, then $\rho_{\Lambda} = \lambda$.
    Suppose that $\Lambda$ is balanced and has at least one negative edge, say $\{v, w\}$.
    Since $\kappa(v) = -\kappa(w)$, we must have $\kappa(v) \in C \backslash U$.
    Hence, we conclude that $\rho_{\Lambda} = \lambda-\mu$ if $\Lambda$ is balanced and has at least one negative edge.
    There are $b(\Sigma \vert Y)-p(\Sigma \vert Y)$ of such $\Lambda$ in $\Sigma \vert Y$.
    Lastly, suppose that $\Lambda$ is unbalanced.
    Then $\rho_{\Lambda} = 0$ if $\lambda-\mu$ is even and $\rho_{\Lambda} = 1$ if $\lambda-\mu$ is odd.
    Combining all of these cases, we obtain the desired formula for $\left\vert R_Y \right\vert$.
    
    Observe that for all $Y, Z \subseteq E(\Gamma)$, we have $R_Y \cap R_Z = R_{Y \cup Z}$.
    Therefore, by the inclusion-exclusion principle, we have
    \begin{align*}
        f(\Sigma, \lambda, \mu) &= \lambda^{\left\vert V(\Gamma) \right\vert} - \left\vert \bigcup_{e \in E(\Gamma)} R_e \right\vert = \left\vert R_{\varnothing} \right\vert - \left( \sum_{i=1}^{|E(\Gamma)|} (-1)^{i+1} \sum_{Y \subseteq E(\Gamma), \, |Y|=i} \left\vert R_Y \right\vert \right) \\
        &= \sum_{i=0}^{\left\vert E(\Gamma) \right\vert} (-1)^i \sum_{Y \subseteq E(\Gamma), \, |Y|=i} \left\vert R_Y \right\vert,
    \end{align*}
    as required.
\end{proof}

Clearly, for each integer $\mu \geqslant 0$, there exist infinitely many integers $\lambda \geqslant \mu$ such that $\lambda-\mu$ is even and infinitely many integers $\lambda \geqslant \mu$ such that $\lambda-\mu$ is odd.
Then, by Theorem~\ref{thm:iechrombivar}, we conclude that there exist unique $\mathsf E(\Sigma, x, y)$, $\mathsf O(\Sigma, x, y) \in \mathbb{Z}[x,y]$ such that for all integers $\lambda$ and $\mu$ where $\lambda \geqslant \mu \geqslant 0$, we have $f(\Sigma, \lambda, \mu) = \mathsf E(\Sigma, \lambda, \mu)$ if $\lambda-\mu$ is even, and $f(\Sigma, \lambda, \mu) = \mathsf O(\Sigma, \lambda, \mu)$ if $\lambda-\mu$ is odd.
We call $\mathsf E(\Sigma, x, y)$ the \textbf{even bivariate chromatic polynomial} of $\Sigma$ and $\mathsf O(\Sigma, x, y)$ the \textbf{odd bivariate chromatic polynomial} of $\Sigma$.
For any signed graph $\Sigma$, define $\mathscr B(\Sigma, x, y) \coloneqq \left( \mathsf E(\Sigma, x, y), \; \mathsf O(\Sigma, x, y) \right)$, which we refer to as the \textbf{bivariate chromatic polynomials} of $\Sigma$.
Given signed graphs $\Sigma_1$ and $\Sigma_2$, it follows that $\mathscr B(\Sigma_1, x, y) = \mathscr B(\Sigma_2, x, y)$ if and only if $\mathsf E(\Sigma_1, x, y) = \mathsf E(\Sigma_2, x, y)$ and $\mathsf O(\Sigma_1, x, y) = \mathsf O(\Sigma_2, x, y)$.
Equivalently, we also have that $\mathscr B(\Sigma_1, x, y) = \mathscr B(\Sigma_2, x, y)$ if and only if for all integers $\lambda$ and $\mu$ such that $\lambda \geqslant \mu \geqslant 0$, we have $f(\Sigma_1, \lambda, \mu) = f(\Sigma_2, \lambda, \mu)$.

Let $\lambda \geqslant 0$ be an integer.
Then the $\lambda$-colour set $C_{\lambda}$ is also a $(\lambda, 0)$-colour set.
It follows that $f(\Sigma, \lambda, 0) = f(\Sigma, \lambda)$ and hence, we obtain the following corollary.

\begin{cor}
\label{cor:bivaru0}
    Let $\Sigma$ be a signed graph.
    Then $\mathscr B(\Sigma, x, 0) = \mathscr C(\Sigma, x)$.
\end{cor}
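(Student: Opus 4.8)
The plan is to verify that the $\lambda$-colour set $C_\lambda$ is a legitimate $(\lambda,0)$-colour set, deduce the numerical identity $f(\Sigma,\lambda,0)=f(\Sigma,\lambda)$ for every integer $\lambda\geqslant 0$, and then pass from numbers to polynomials using that a polynomial is determined by its values on an infinite set.

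First I would check the colour-set conditions of Definition~\ref{dfn:bivarcolourset} with $\mu=0$. Take $U=\varnothing$, so $|U|=0=\mu$ and $-U\cap U=\varnothing$ hold trivially. If $\lambda$ is even, put $P=C_\lambda=\mathbb Z^*\cap[-\lambda/2,\lambda/2]$; then $-P=P$, $|P|=\lambda=\lambda-\mu$, and $C_\lambda=P\cup U$, which is case~1. If $\lambda$ is odd, put $P=C_\lambda\setminus\{0\}$; then $-P=P$, $|P|=\lambda-1=\lambda-\mu-1$, and $C_\lambda=P\cup U\cup\{0\}$, which is case~2. Hence $C_\lambda$ is a $(\lambda,0)$-colour set. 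Since $f(\Sigma,\lambda,0)$ is the number of proper $C$-colourings for any choice of $(\lambda,0)$-colour set $C$ (by the remark preceding Definition~\ref{dfn:bivarchrom}) and $f(\Sigma,\lambda)$ is the number of proper $C_\lambda$-colourings, we get $f(\Sigma,\lambda,0)=f(\Sigma,\lambda)$ for all integers $\lambda\geqslant 0$. Alternatively, this identity is immediate from comparing Theorem~\ref{thm:iechrombivar} at $\mu=0$ with Theorem~\ref{thm:iechrom}: the factor $\lambda^{p(\Sigma\vert Y)}\,(\lambda-0)^{b(\Sigma\vert Y)-p(\Sigma\vert Y)}=\lambda^{b(\Sigma\vert Y)}$ collapses the bivariate summand onto the univariate one, and the congruence defining $\delta$ becomes $\delta\equiv\lambda\pmod 2$.

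Next I would upgrade this numerical identity to a polynomial identity, taking care of the parity bookkeeping. With $\mu=0$ the condition ``$\lambda-\mu$ even'' becomes ``$\lambda$ even'', so for every even integer $\lambda\geqslant 0$ we have $\mathsf E(\Sigma,\lambda,0)=f(\Sigma,\lambda,0)=f(\Sigma,\lambda)=\mathsf E(\Sigma,\lambda)$; since the even nonnegative integers form an infinite set and both $\mathsf E(\Sigma,x,0)$ and $\mathsf E(\Sigma,x)$ are polynomials in $x$, we conclude $\mathsf E(\Sigma,x,0)=\mathsf E(\Sigma,x)$. Running the same argument over the odd nonnegative integers gives $\mathsf O(\Sigma,x,0)=\mathsf O(\Sigma,x)$, and combining the two constituents yields $\mathscr B(\Sigma,x,0)=\mathscr C(\Sigma,x)$.

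There is no genuine obstacle here; the only point deserving attention is the parity alignment in the final step — namely that setting $y=0$ (that is, $\mu=0$) makes the even/odd split defining $\mathsf E(\Sigma,x,y)$ and $\mathsf O(\Sigma,x,y)$ restrict precisely onto the even/odd split defining $\mathsf E(\Sigma,x)$ and $\mathsf O(\Sigma,x)$, so the two constituents match up correctly rather than being interchanged.
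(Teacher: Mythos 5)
Your proposal is correct and follows essentially the same route as the paper: the paper's justification is precisely the observation that $C_\lambda$ is a $(\lambda,0)$-colour set, whence $f(\Sigma,\lambda,0)=f(\Sigma,\lambda)$ for all integers $\lambda\geqslant 0$, and the polynomial identity follows. You simply spell out the verification of Definition~\ref{dfn:bivarcolourset} and the parity bookkeeping in more detail, which is fine.
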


It follows from the next corollary that, for any signed graph $\Sigma$, $\mathscr B(\Sigma, x, y) = \mathscr C(\Sigma, x)$ if and only if $\Sigma$ is all-positive.

\begin{cor}
\label{cor:negedges}
    Let $\Sigma = (\Gamma, \sigma)$ be a signed graph and let $\nu$ be the number of negative edges of $\Sigma$.
    Then the coefficients of $x^{\left\vert V(\Gamma) \right\vert-1}$ and $x^{\left\vert V(\Gamma) \right\vert-2} y$ in both $\mathsf E(\Sigma, x, y)$ and $\mathsf O(\Sigma, x, y)$ are equal to $-\left\vert E(\Gamma) \right\vert$ and $\nu$, respectively.
    Moreover, if $\Sigma$ is all-positive, then $\mathscr B(\Sigma, x, y) = \mathscr C(\Sigma, x) = \left( \chi(\Gamma, x), \; \chi(\Gamma, x) \right)$.
\end{cor}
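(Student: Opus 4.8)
The plan is to read off both coefficients directly from the subgraph expansion of Theorem~\ref{thm:iechrombivar}, and to dispatch the ``moreover'' clause with a one-line counting argument. Write $n \coloneqq |V(\Gamma)|$. First I would convert Theorem~\ref{thm:iechrombivar} into polynomial identities: when $\lambda-\mu$ is even we have $\delta = 0$, so $\delta^{c(\Sigma\vert Y)-b(\Sigma\vert Y)}$ equals $1$ if $\Sigma\vert Y$ is balanced and $0$ otherwise, and since this holds for infinitely many admissible pairs $(\lambda,\mu)$ it yields the polynomial identity
\[
\mathsf E(\Sigma, x, y) = \sum_{\substack{Y \subseteq E(\Gamma) \\ \Sigma\vert Y \text{ balanced}}} (-1)^{|Y|}\, x^{p(\Sigma\vert Y)} (x-y)^{b(\Sigma\vert Y) - p(\Sigma\vert Y)},
\]
and, taking $\lambda-\mu$ odd so that $\delta = 1$,
\[
\mathsf O(\Sigma, x, y) = \sum_{Y \subseteq E(\Gamma)} (-1)^{|Y|}\, x^{p(\Sigma\vert Y)} (x-y)^{b(\Sigma\vert Y) - p(\Sigma\vert Y)}.
\]

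The key step is the observation that, after expanding the binomial, every monomial occurring in $x^{p}(x-y)^{b-p}$ is homogeneous of total degree $b = b(\Sigma\vert Y)$. Since $b(\Sigma\vert Y) \leqslant c(\Sigma\vert Y) = n - \rank(\Sigma\vert Y)$ and a simple graph with at least two edges has rank at least $2$, we get $b(\Sigma\vert Y) \leqslant n-2$ whenever $|Y| \geqslant 2$; such terms therefore contribute nothing to the coefficient of $x^{n-1}$ or of $x^{n-2}y$, both of which are monomials of total degree $\geqslant n-1$. Hence only $Y$ with $|Y| \leqslant 1$ matter, and for these $\Sigma\vert Y$ is automatically balanced, so the relevant parts of the $\mathsf E$- and $\mathsf O$-expansions coincide. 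It then remains to compute: $Y = \varnothing$ contributes $x^n$; a single positive edge $e$ gives an all-positive component, so $p = b = n-1$ and the summand is $-x^{n-1}$; a single negative edge $e$ gives a balanced but not all-positive component, so $p = n-2$, $b = n-1$, and the summand is $-x^{n-2}(x-y) = -x^{n-1}+x^{n-2}y$. Summing over the $|E(\Gamma)|-\nu$ positive edges and the $\nu$ negative edges yields coefficient $-|E(\Gamma)|$ for $x^{n-1}$ and coefficient $\nu$ for $x^{n-2}y$, in both $\mathsf E(\Sigma,x,y)$ and $\mathsf O(\Sigma,x,y)$.

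For the ``moreover'' clause, if $\Sigma$ is all-positive then for any $(\lambda,\mu)$-colour set $C$ a proper $C$-colouring is simply a map $\kappa : V(\Gamma) \to C$ with $\kappa(v) \neq \kappa(w)$ on every edge, so $f(\Sigma,\lambda,\mu) = f(\Gamma,\lambda) = \chi(\Gamma,\lambda)$ independently of $\mu$; hence $\mathsf E(\Sigma,x,y) = \mathsf O(\Sigma,x,y) = \chi(\Gamma,x)$, and Corollary~\ref{cor:bivaru0} gives $\mathscr C(\Sigma,x) = \mathscr B(\Sigma,x,0) = \left(\chi(\Gamma,x),\, \chi(\Gamma,x)\right)$. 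I do not anticipate a serious obstacle; the only points requiring care are the truncation to $|Y| \leqslant 1$ via the total-degree bound and the correct identification of $p(\Sigma\vert Y)$, $b(\Sigma\vert Y)$, and $c(\Sigma\vert Y)$ for the one-edge spanning subgraphs.
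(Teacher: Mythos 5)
Your proposal is correct and follows essentially the same route as the paper: both truncate the subgraph expansion of Theorem~\ref{thm:iechrombivar} to $|Y|\leqslant 1$ by bounding the number of components (equivalently the total degree of each summand) for $|Y|\geqslant 2$, then read off the contributions of a single positive versus negative edge, and both handle the all-positive case by noting that $f(\Sigma,\lambda,\mu)=f(\Gamma,\lambda)$ independently of $\mu$. Your explicit remark that every monomial of $x^{p}(x-y)^{b-p}$ is homogeneous of total degree $b(\Sigma\vert Y)$ is a slightly more careful justification of the truncation step than the paper gives, but it is the same argument.
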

\begin{proof}
    In Theorem~\ref{thm:iechrombivar}, observe that if $\vert Y \vert = i > 1$, then $c(\Sigma \vert Y) \leqslant \left\vert V(\Gamma) \right\vert-2$.
    Hence, the coefficients of $x^{\left\vert V(\Gamma) \right\vert-1}$ and $x^{\left\vert V(\Gamma) \right\vert-2} y$ in both $\mathsf E(\Sigma, x, y)$ and $\mathsf O(\Sigma, x, y)$ are obtained precisely when $\vert Y \vert = i = 1$.
    Suppose that $\vert Y \vert = 1$ and thus, the signed graph $\Sigma \vert Y$ consists of $\left\vert V(\Gamma) \right\vert-2$ copies of $K_1$ and either $+K_2$ or $-K_2$.
    Applying Theorem~\ref{thm:iechrombivar}, the following sum yields the coefficients of $x^{\left\vert V(\Gamma) \right\vert-1}$ and $x^{\left\vert V(\Gamma) \right\vert-2} y$ in both $\mathsf E(\Sigma, x, y)$ and $\mathsf O(\Sigma, x, y)$:
    \[
    -(\left\vert E(\Gamma) \right\vert - \nu) x^{\left\vert V(\Gamma) \right\vert - 1} - \nu x^{\left\vert V(\Gamma) \right\vert - 2} (x-y) = -\left\vert E(\Gamma) \right\vert x^{\left\vert V(\Gamma) \right\vert - 1} + \nu x^{\left\vert V(\Gamma) \right\vert - 2} y.
    \]
    Next, suppose that $\Sigma$ is all-positive.
    By definition, for all integers $\lambda$ and $\mu$ such that $\lambda \geqslant \mu \geqslant 0$, we have $f(\Sigma, \lambda, \mu) = f(\Gamma, \lambda) = f(\Sigma, \lambda)$.
    Therefore, we conclude that $\mathscr B(\Sigma, x, y) = \mathscr C(\Sigma, x) = \left( \chi(\Gamma, x), \; \chi(\Gamma, x) \right)$.
\end{proof}

We find that $\mathscr B(\Sigma, x, y)$ is not always preserved under switching.
For example, we have $\mathscr B(+K_2,x,y) = \left( x(x-1), \; x(x-1) \right)$ while $\mathscr B(-K_2,x,y) = \left( x^2-x+y, \; x^2-x+y \right)$.
Meanwhile, if two signed graphs $\Sigma_1$ and $\Sigma_2$ are isomorphic, then for all integers $\lambda$ and $\mu$ such that $\lambda \geqslant \mu \geqslant 0$, we clearly have $f(\Sigma_1, \lambda, \mu) = f(\Sigma_2, \lambda, \mu)$.

\begin{prop}
\label{prop:isombivarchrom}
    Let $\Sigma_1$ and $\Sigma_2$ be signed graphs.
    If $\Sigma_1 \cong \Sigma_2$ then $\mathscr B(\Sigma_1, x, y) = \mathscr B(\Sigma_2, x, y)$.
\end{prop}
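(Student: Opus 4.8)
The plan is to transfer a sign-preserving graph isomorphism to a bijection between colourings, and then invoke the uniqueness of the bivariate chromatic polynomials established just after Theorem~\ref{thm:iechrombivar}.

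First I would unpack the hypothesis: write $\Sigma_1 = (\Gamma_1, \sigma_1)$, $\Sigma_2 = (\Gamma_2, \sigma_2)$, and let $\varphi : V(\Gamma_1) \to V(\Gamma_2)$ be a graph isomorphism with $\sigma_1(\{v,w\}) = \sigma_2(\{\varphi(v),\varphi(w)\})$ for every edge $\{v,w\} \in E(\Gamma_1)$. Fix integers $\lambda \geqslant \mu \geqslant 0$ and a $(\lambda, \mu)$-colour set $C$. The key step is to check that $\kappa \mapsto \kappa \circ \varphi^{-1}$ is a bijection from functions $V(\Gamma_1) \to C$ to functions $V(\Gamma_2) \to C$, with inverse $\kappa' \mapsto \kappa' \circ \varphi$, and that it restricts to a bijection between proper $C$-colourings: since $\{v,w\} \in E(\Gamma_1)$ if and only if $\{\varphi(v),\varphi(w)\} \in E(\Gamma_2)$, and since the two signatures agree on corresponding edges, the condition $\kappa(v) \neq \sigma_1(\{v,w\})\kappa(w)$ holds for all $\{v,w\} \in E(\Gamma_1)$ exactly when $(\kappa \circ \varphi^{-1})$ is a proper $C$-colouring of $\Sigma_2$. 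Hence $f(\Sigma_1, \lambda, \mu) = f(\Sigma_2, \lambda, \mu)$.

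Then I would conclude: as this equality holds for every admissible pair $(\lambda, \mu)$, and for each fixed $\mu \geqslant 0$ there are infinitely many $\lambda \geqslant \mu$ with $\lambda - \mu$ even and infinitely many with $\lambda - \mu$ odd, the uniqueness of $\mathsf E(\Sigma, x, y)$ and $\mathsf O(\Sigma, x, y)$ noted after Theorem~\ref{thm:iechrombivar} yields $\mathsf E(\Sigma_1, x, y) = \mathsf E(\Sigma_2, x, y)$ and $\mathsf O(\Sigma_1, x, y) = \mathsf O(\Sigma_2, x, y)$, that is, $\mathscr B(\Sigma_1, x, y) = \mathscr B(\Sigma_2, x, y)$. (Alternatively, mirroring the proof of Corollary~\ref{cor:switchisomchrom}, one could argue directly from Theorem~\ref{thm:iechrombivar}: $\varphi$ induces a bijection $Y \mapsto Y^{\varphi}$ on edge subsets under which $\Sigma_1 \vert Y \cong \Sigma_2 \vert Y^{\varphi}$, so the isomorphism-invariant quantities $p$, $b$, $c$ agree on corresponding spanning subgraphs and the two formulae match term by term.)

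There is essentially no real obstacle here — the statement is a routine invariance check, and indeed the equality $f(\Sigma_1, \lambda, \mu) = f(\Sigma_2, \lambda, \mu)$ was already observed in the paragraph preceding the proposition. The only points that merit care are recording that the definition of a proper $C$-colouring depends only on the edge set and the signature, both respected by $\varphi$, and that the passage from agreement on the integer points to equality of polynomials is precisely the uniqueness statement already in hand.
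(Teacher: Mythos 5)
Your proposal is correct and follows exactly the route the paper takes: the paper gives no separate proof, simply observing in the sentence before the proposition that isomorphic signed graphs clearly satisfy $f(\Sigma_1,\lambda,\mu)=f(\Sigma_2,\lambda,\mu)$ for all admissible $(\lambda,\mu)$, which together with the uniqueness of the bivariate polynomials yields the claim. You have merely made explicit the colouring bijection $\kappa\mapsto\kappa\circ\varphi^{-1}$ that the paper leaves implicit.
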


Similar to Theorem~\ref{thm:polyiffbal}, the following theorem states that the even and odd bivariate chromatic polynomials are equal precisely when $\Sigma$ is balanced.

\begin{thm}
\label{thm:bivarpolyiffbal}
    Let $\Sigma$ be a signed graph.
    Then $\mathsf E(\Sigma,x,y) = \mathsf O(\Sigma,x,y)$ if and only if $\Sigma$ is balanced.
\end{thm}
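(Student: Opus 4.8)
The plan is to prove the two implications separately: the forward implication (balanced $\Rightarrow$ equality) will follow directly from Theorem~\ref{thm:iechrombivar}, once one observes that balance is inherited by every spanning subgraph; the reverse implication will be deduced from the already-established univariate Theorem~\ref{thm:polyiffbal} by specialising $y=0$.

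For the forward direction, suppose $\Sigma = (\Gamma,\sigma)$ is balanced. Then every signed spanning subgraph $\Sigma \vert Y$ is balanced as well, since a negative cycle of $\Sigma \vert Y$ would be a negative cycle of $\Sigma$. Hence $c(\Sigma \vert Y) = b(\Sigma \vert Y)$ for every $Y \subseteq E(\Gamma)$, so the factor $\delta^{c(\Sigma \vert Y) - b(\Sigma \vert Y)} = \delta^0 = 1$ in Theorem~\ref{thm:iechrombivar} regardless of the parity of $\lambda-\mu$. Consequently, for all integers $\lambda \geqslant \mu \geqslant 0$,
\[
f(\Sigma, \lambda, \mu) = \sum_{i=0}^{\vert E(\Gamma) \vert} (-1)^i \sum_{\substack{Y \subseteq E(\Gamma) \\ \vert Y \vert = i}} \lambda^{p(\Sigma \vert Y)} (\lambda-\mu)^{b(\Sigma \vert Y) - p(\Sigma \vert Y)} = Q(\lambda,\mu),
\]
where $Q \in \mathbb{Z}[x,y]$ is the polynomial obtained by reading the right-hand side formally in $\lambda,\mu$. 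Since $Q$ agrees with $f(\Sigma,\cdot,\cdot)$ at all admissible points with $\lambda-\mu$ even, the uniqueness of $\mathsf E(\Sigma,x,y)$ (established right after Theorem~\ref{thm:iechrombivar}) gives $\mathsf E(\Sigma,x,y) = Q(x,y)$; the same argument with $\lambda-\mu$ odd gives $\mathsf O(\Sigma,x,y) = Q(x,y)$. Therefore $\mathsf E(\Sigma,x,y) = \mathsf O(\Sigma,x,y)$.

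For the reverse direction I would argue the contrapositive. Suppose $\Sigma$ is unbalanced. By Theorem~\ref{thm:polyiffbal} we have $\mathsf E(\Sigma,x) \neq \mathsf O(\Sigma,x)$. By Corollary~\ref{cor:bivaru0}, $\mathscr B(\Sigma,x,0) = \mathscr C(\Sigma,x)$, that is, $\mathsf E(\Sigma,x,0) = \mathsf E(\Sigma,x)$ and $\mathsf O(\Sigma,x,0) = \mathsf O(\Sigma,x)$. Hence $\mathsf E(\Sigma,x,0) \neq \mathsf O(\Sigma,x,0)$, and therefore $\mathsf E(\Sigma,x,y) \neq \mathsf O(\Sigma,x,y)$ as elements of $\mathbb{Z}[x,y]$.

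I do not anticipate a genuine obstacle: the forward direction is essentially immediate from Theorem~\ref{thm:iechrombivar}, and the reverse direction is a one-line consequence of the univariate case. Should a self-contained proof of the reverse direction be preferred, the only substantive point is the structural fact already implicit in the proof of Theorem~\ref{thm:polyiffbal}: writing $n = \vert V(\Gamma)\vert$ and letting $d$ be the minimum size of a negative cycle, the unbalanced spanning subgraphs $\Sigma\vert Y$ with $b(\Sigma\vert Y) = n-d$ are exactly the shortest negative cycles together with $n-d$ isolated vertices (a vertex count forces a single unbalanced component on $d$ vertices, and minimality of $d$ forbids chords, whence $\vert Y\vert = d$). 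Since each such term contributes $(-1)^d x^{\,p(\Sigma\vert Y)}(x-y)^{b(\Sigma\vert Y)-p(\Sigma\vert Y)} = (-1)^d x^{n-d}$ and no other $Y$ contributes to the monomial $x^{n-d}$, the coefficient of $x^{n-d}$ in $\mathsf O(\Sigma,x,y) - \mathsf E(\Sigma,x,y)$ equals $(-1)^d$ times the (positive) number of shortest negative cycles, which is nonzero.
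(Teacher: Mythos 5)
Your proof is correct and follows essentially the same route as the paper: the forward direction uses that balance passes to spanning subgraphs so the $\delta$-factor in Theorem~\ref{thm:iechrombivar} disappears, and the reverse direction specialises $y=0$ via Corollary~\ref{cor:bivaru0} to reduce to Theorem~\ref{thm:polyiffbal}. The optional self-contained coefficient argument you sketch for the reverse direction is also sound, but the paper does not need it.
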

\begin{proof}
    Suppose that $\Sigma = (\Gamma, \sigma)$ is balanced and let $Y \subseteq E(\Gamma)$.
    Then $\Sigma \vert Y$ is also balanced and hence, $c(\Sigma \vert Y) = b(\Sigma \vert Y)$.
    By Theorem~\ref{thm:iechrombivar}, we conclude that
    \[
        \mathsf E(\Sigma,x,y) = \mathsf O(\Sigma,x,y) = \sum_{i=0}^{|E(\Gamma)|} (-1)^i \sum_{Y \subseteq E(\Gamma), \, |Y|=i} x^{p(\Sigma \vert Y)} \cdot (x-y)^{b(\Sigma \vert Y)-p(\Sigma \vert Y)}.
    \]
    Next, suppose that $\Sigma$ is unbalanced but $\mathsf E(\Sigma,x,y) = \mathsf O(\Sigma,x,y)$.
    By Corollary~\ref{cor:bivaru0}, we have that
    \(
    \mathsf E(\Sigma,x) = \mathsf E(\Sigma,x,0) = \mathsf O(\Sigma,x,0) = \mathsf O(\Sigma,x),
    \)
    which contradicts Theorem~\ref{thm:polyiffbal}.
    Therefore, if $\Sigma$ is unbalanced, then $\mathsf E(\Sigma,x,y) \neq \mathsf O(\Sigma,x,y)$.
\end{proof}

Here we also compute the even and odd bivariate chromatic polynomials of the signed graphs $G_1$ and $G_2$ in Figure~\ref{fig:signedgem}.
We find that both $\mathscr B(G_1,x,y)$ and $\mathscr B(G_2,x,y)$ are equal to 
\[
\left( (x-2)^2 (x^3-3x^2+xy +3x-2y), \; (x-2)^2 (x^3-3x^2+xy +3x-2y-1) \right).
\]
For signed graphs $\Sigma_1$ and $\Sigma_2$ in Figure~\ref{fig:cochromnonisomabs}, we have that both $\mathscr B(\Sigma_1,x,y)$ and $\mathscr B(\Sigma_2,x,y)$ are equal to
\[
\left( (x-1) (x-2)^2 (x^3-3x^2+xy +3x-2y), \; (x-1) (x-2)^2 (x^3-3x^2+xy +3x-2y-1) \right).
\]

In general, given any signed graph $\Sigma$, we cannot simply replace $\mathscr B(\Sigma,x,y)$ by either one of $\mathsf E(\Sigma,x,y)$ or $\mathsf O(\Sigma,x,y)$.
As an example, we have two unbalanced signed graphs $\Sigma_3$ and $\Sigma_4$ in Figure~\ref{fig:sigma3sigma4} where both $\mathsf O(\Sigma_3,x,y)$ and $\mathsf O(\Sigma_4,x,y)$ are equal to
\[
x^5 - 7x^4 + 3x^3y + 20x^3 - 15x^2y - 29x^2 + xy^2 + 26xy + 21x -2y^2-15y-6,
\]
while
\begin{align*}
    \mathsf E(\Sigma_3,x,y) &= x^5 - 7x^4 + 3x^3y + 20x^3 - 15x^2y - 28x^2 + xy^2 + 26xy + 16x - 2y^2 - 15y,\\
    \mathsf E(\Sigma_4,x,y) &= x^5 - 7x^4 + 3x^3y + 20x^3 - 15x^2y - 27x^2 + xy^2 + 26xy + 14x - 2y^2 - 14y
\end{align*}
are distinct.
We immediately have that $\mathsf O(\Sigma_3,x) = \mathsf O(\Sigma_4,x) = (x-1)^2(x-2)(x^2-3x+3)$, while $\mathsf E(\Sigma_3,x) = x(x-2)^2(x^2-3x+4)$ and $\mathsf E(\Sigma_4,x) = x(x-2)(x^3-5x^2+10x-7)$ are distinct.
The pair $\Sigma_3$ and $\Sigma_4$ is not an isolated occurrence, and it illustrates the importance of considering both the even and odd chromatic polynomials, whether univariate or bivariate.

\begin{figure}[htbp]
    \centering
    \begin{subfigure}{0.33\textwidth}
        \centering
        \begin{tikzpicture}[scale=1.2, every node/.style={scale=0.9}]
        \tikzstyle{vertex}=[circle, thin, fill=black!90, inner sep=0pt, minimum width=4pt]
        \node [vertex] (v2) at (18:1.5) {};
        \node [vertex] (v1) at (90:1.5) {};
        \node [vertex] (v5) at (162:1.5) {};
        \node [vertex] (v4) at (234:1.5) {};
        \node [vertex] (v3) at (306:1.5) {};
        \draw  (v1) edge (v2);
        \draw  (v1) edge (v5);
        \draw  (v2) edge (v3);
        \draw  (v2) edge [dashed] (v4);
        \draw  (v3) edge (v4);
        \draw  (v3) edge [dashed] (v5);
        \draw  (v4) edge [dashed] (v5);
        \end{tikzpicture}
        \caption*{$\Sigma_3$}
    \end{subfigure}
    \begin{subfigure}{0.33\textwidth}
        \centering
        \begin{tikzpicture}[scale=1.2, every node/.style={scale=0.9}]
        \tikzstyle{vertex}=[circle, thin, fill=black!90, inner sep=0pt, minimum width=4pt]
        \node [vertex] (v2) at (18:1.5) {};
        \node [vertex] (v1) at (90:1.5) {};
        \node [vertex] (v5) at (162:1.5) {};
        \node [vertex] (v4) at (234:1.5) {};
        \node [vertex] (v3) at (306:1.5) {};
        \draw  (v1) edge (v2);
        \draw  (v1) edge [dashed] (v3);
        \draw  (v1) edge (v4);
        \draw  (v1) edge [dashed] (v5);
        \draw  (v2) edge (v3);
        \draw  (v3) edge (v4);
        \draw  (v4) edge [dashed] (v5);
        \end{tikzpicture}
        \caption*{$\Sigma_4$}
    \end{subfigure}
    \caption{Signed graphs $\Sigma_3$ and $\Sigma_4$.}
    \label{fig:sigma3sigma4}
\end{figure}
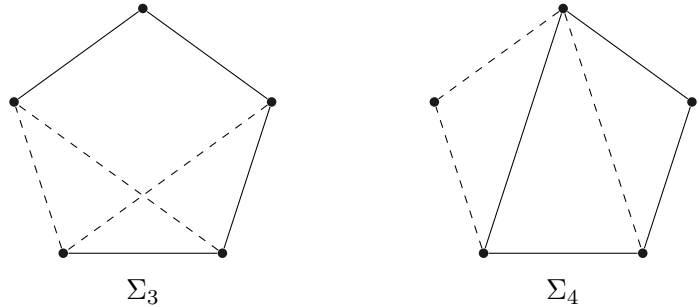

\subsection{Dominating-vertex deletion formulae}
\label{subsec:deletionform}

Let $\Sigma$ be a signed graph and let $v$ be a vertex of $\Sigma$.
We call $v$ a \textbf{positive dominating vertex} if $v$ is connected with positive edges to all other vertices of $\Sigma$, \textbf{negative dominating vertex} if $v$ is connected with negative edges to all other vertices of $\Sigma$, and \textbf{isolated vertex} if $v$ is not connected to any other vertices of $\Sigma$.
By these definitions, we let the only vertex of $K_1$ to be simultaneously an isolated vertex, positive dominating vertex, and negative dominating vertex of $K_1$.
We utilise the bivariate chromatic polynomials to find recursive dominating-vertex deletion formulae.
For isolated vertex, the recursive deletion formula is straightforward.
Let $\Sigma \backslash v$ denote the signed graph obtained from $\Sigma$ by removing the vertex $v$ and all edges that are incident with $v$.
If $v$ is an isolated vertex of $\Sigma$, then $\mathsf E(\Sigma,x,y) = x \cdot \mathsf E(\Sigma \backslash v,x,y)$ and $\mathsf O(\Sigma,x,y) = x \cdot \mathsf O(\Sigma \backslash v,x,y)$.

\begin{prop}
\label{prop:posuniv}
    Let $\Sigma$ be a signed graph and let $v$ be a vertex of $\Sigma$.
    If $v$ is a positive dominating vertex of $\Sigma$ then 
    \begin{align*}
    \mathsf E(\Sigma,x,y) &= y \cdot \mathsf E(\Sigma \backslash v,x-1,y-1) + (x-y) \cdot \mathsf E(\Sigma \backslash v,x-1,y+1), \\
    \mathsf O(\Sigma,x,y) &= y \cdot \mathsf O(\Sigma \backslash v,x-1,y-1) + (x-y-1) \cdot \mathsf O(\Sigma \backslash v,x-1,y+1) + \mathsf E(\Sigma \backslash v,x-1,y).
    \end{align*}
\end{prop}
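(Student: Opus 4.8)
The plan is to fix a $(\lambda,\mu)$-colour set and count proper colourings of $\Sigma$ according to the colour received by the dominating vertex $v$. So fix integers $\lambda\geqslant\mu\geqslant1$ and let $C$ be a $(\lambda,\mu)$-colour set with paired-colour set $P$ and unpaired-colour set $U$, as in Definition~\ref{dfn:bivarcolourset}. Because $v$ is joined to every other vertex of $\Sigma$ by a positive edge, a function $\kappa\colon V(|\Sigma|)\to C$ is a proper $C$-colouring of $\Sigma$ if and only if $\kappa(w)\neq\kappa(v)$ for all $w\neq v$ and the restriction of $\kappa$ to $\Sigma\backslash v$ is a proper colouring of $\Sigma\backslash v$. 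Hence, writing $c=\kappa(v)$, the proper $C$-colourings of $\Sigma$ with $\kappa(v)=c$ are in bijection with the proper $(C\backslash\{c\})$-colourings of $\Sigma\backslash v$, and summing over $c\in C$ gives $f(\Sigma,\lambda,\mu)=\sum_{c\in C}\bigl(\text{number of proper }(C\backslash\{c\})\text{-colourings of }\Sigma\backslash v\bigr)$.

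The key step is to identify the type of the colour set $C\backslash\{c\}$ in each case. If $c\in U$, then $C\backslash\{c\}$ keeps the paired-colour set $P$ and has unpaired-colour set $U\backslash\{c\}$ of size $\mu-1$, so it is a $(\lambda-1,\mu-1)$-colour set; there are $\mu$ such colours $c$. If $c\in P$ (so $c\neq0$ and $-c\in P\backslash\{c\}$), then deleting $c$ leaves its partner $-c$ without its negative, so $C\backslash\{c\}$ has paired-colour set $P'=P\backslash\{c,-c\}$ and unpaired-colour set $U'=U\cup\{-c\}$; one verifies the axioms of Definition~\ref{dfn:bivarcolourset} (we have $-P'=P'$, and $-U'=-U\cup\{c\}$ is disjoint from $U'=U\cup\{-c\}$ since $c\notin U$ and $c\neq-c$), and a parity count confirms that $C\backslash\{c\}$ is a $(\lambda-1,\mu+1)$-colour set; there are $|P|$ such colours, namely $\lambda-\mu$ of them when $\lambda-\mu$ is even and $\lambda-\mu-1$ when $\lambda-\mu$ is odd. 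Finally, if $\lambda-\mu$ is odd then $0\in C$, and $C\backslash\{0\}=P\cup U$ is a $(\lambda-1,\mu)$-colour set, accounting for exactly one colour. In every case $C\backslash\{c\}$ has $\lambda-1$ elements, as it must.

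Assembling these counts and invoking the fact (established before Definition~\ref{dfn:bivarchrom}) that $f(\Sigma\backslash v,\cdot,\cdot)$ does not depend on the chosen colour set, we obtain, when $\lambda-\mu$ is even,
\[
f(\Sigma,\lambda,\mu)=\mu\cdot f(\Sigma\backslash v,\lambda-1,\mu-1)+(\lambda-\mu)\cdot f(\Sigma\backslash v,\lambda-1,\mu+1),
\]
and when $\lambda-\mu$ is odd,
\[
f(\Sigma,\lambda,\mu)=\mu\cdot f(\Sigma\backslash v,\lambda-1,\mu-1)+(\lambda-\mu-1)\cdot f(\Sigma\backslash v,\lambda-1,\mu+1)+f(\Sigma\backslash v,\lambda-1,\mu).
\]
In the even case $(\lambda-1)-(\mu\mp1)=\lambda-\mu$ is even, so each term $f(\Sigma\backslash v,\lambda-1,\mu\mp1)$ equals $\mathsf E(\Sigma\backslash v,\lambda-1,\mu\mp1)$; in the odd case $(\lambda-1)-(\mu\mp1)$ is odd whereas $(\lambda-1)-\mu$ is even, so the first two evaluations become $\mathsf O$ and the last becomes $\mathsf E$. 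Each identity holds for every integer $\mu\geqslant1$ and every $\lambda$ of the prescribed parity with $\lambda\geqslant\mu$, of which there are infinitely many; fixing $y=\mu$ and letting $\lambda$ vary shows the two sides agree as polynomials in $x$ for each $\mu\geqslant1$, and then varying $\mu$ promotes this to an identity in $\mathbb{Z}[x,y]$, which is the asserted formula for $\mathsf E(\Sigma,x,y)$ and $\mathsf O(\Sigma,x,y)$ (the values at $y=0$, and the degenerate case $\Sigma=K_1$, are then automatic). The only real obstacle is the bookkeeping in the second paragraph, in particular confirming that deleting a paired colour yields a colour set of type $(\lambda-1,\mu+1)$; but this is a routine check against Definition~\ref{dfn:bivarcolourset}.
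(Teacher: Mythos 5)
Your proof is correct and follows essentially the same route as the paper's: fix a $(\lambda,\mu)$-colour set, condition on the colour of the positive dominating vertex $v$, observe that the remaining colours form a $(\lambda-1,\mu-1)$-, $(\lambda-1,\mu+1)$-, or $(\lambda-1,\mu)$-colour set according as $\kappa(v)$ lies in $U$, $P$, or equals $0$, and then pass from the counting identities (valid for $\mu\geqslant 1$ and $\lambda-\mu\geqslant 2$) to the polynomial identities. Your write-up is in fact slightly more explicit than the paper's in verifying the colour-set axioms for $C\backslash\{c\}$ when $c\in P$ and in spelling out the interpolation step.
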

\begin{proof}
    Suppose that $v$ is a positive dominating vertex of $\Sigma$.
    It is easy to check that both equations hold when $\Sigma$ is the signed graph $K_1$.
    We will then prove both equations combinatorially.
    Let $\lambda$ and $\mu$ be integers such that $\lambda \geqslant \mu \geqslant 0$.
    Let $C$ be a $(\lambda, \mu)$-colour set containing a paired-colour set $P$ and an unpaired-colour set $U$.
    Let $\kappa$ be a proper $C$-colouring and let $D = C \backslash \{\kappa(v)\}$.
    For the first equation, it suffices to show that
    \begin{align*}
        \mathsf E(\Sigma,\lambda,\mu) = \mu \cdot \mathsf E(\Sigma \backslash v,\lambda-1,\mu-1) + (\lambda-\mu) \cdot \mathsf E(\Sigma \backslash v,\lambda-1,\mu+1)
    \end{align*}
    whenever $\lambda - 1 \geqslant \mu-1 \geqslant 0$ and $\lambda - 1 \geqslant \mu+1$, or equivalently, whenever $\mu \geqslant 1$ and $\lambda-\mu \geqslant 2$.
    Suppose that $\lambda-\mu$ is even.
    First, suppose that $\kappa(v) \in U$.
    Note that $D$ is a $(\lambda-1, \mu-1)$-colour set so we want to find the number of proper $D$-colourings of $\Sigma \backslash v$, which is equal to $\mathsf E(\Sigma \backslash v,\lambda-1,\mu-1)$.
    Hence, the number of $\kappa$ such that $\kappa(v) \in U$ is equal to $\mu \cdot \mathsf E(\Sigma \backslash v,\lambda-1,\mu-1)$.
    If $\kappa(v) \in P$ then $D$ is a $(\lambda-1, \mu+1)$-colour set.
    Hence, the number of $\kappa$ such that $\kappa(v) \in P$ is equal to $(\lambda-\mu) \cdot \mathsf E(\Sigma \backslash v,\lambda-1,\mu+1)$.
    Altogether, we obtain the first equation.    

    For the second equation, it suffices to show that if $\mu \geqslant 1$ and $\lambda-\mu \geqslant 2$, then
    \begin{align*}
        \mathsf O(\Sigma,\lambda,\mu) = \mu \cdot \mathsf O(\Sigma \backslash v,\lambda-1,\mu-1) + (\lambda-\mu-1) \cdot \mathsf O(\Sigma \backslash v,\lambda-1,\mu+1) + \mathsf E(\Sigma \backslash v,\lambda-1,\mu).
    \end{align*}
    Suppose that $\lambda-\mu$ is odd.
    If $\kappa(v) \in U$ then $D$ is a $(\lambda-1, \mu-1)$-colour set.
    If $\kappa(v) \in P$ then $D$ is a $(\lambda-1, \mu+1)$-colour set.
    Lastly, if $\kappa(v) = 0$ then $D$ is a $(\lambda-1, \mu)$-colour set.
    Altogether, we obtain the second equation.
\end{proof}

The negative dominating-vertex deletion formulae can be proved in a similar manner, as we now show.

\begin{prop}
\label{prop:neguniv}
    Let $\Sigma$ be a signed graph and let $v$ be a vertex of $\Sigma$.
    If $v$ is a negative dominating vertex of $\Sigma$ then
    \begin{align*}
        \mathsf E(\Sigma,x,y) &= y \cdot \mathsf E(\Sigma \backslash v,x,y) + (x-y) \cdot \mathsf E(\Sigma \backslash v,x-1,y+1), \\
        \mathsf O(\Sigma,x,y) &= y \cdot \mathsf O(\Sigma \backslash v,x,y) + (x-y-1) \cdot \mathsf O(\Sigma \backslash v,x-1,y+1) + \mathsf E(\Sigma \backslash v,x-1,y).
    \end{align*}
\end{prop}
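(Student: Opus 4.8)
The plan is to mirror the combinatorial proof of Proposition~\ref{prop:posuniv}, the only essential change being which colour the neighbours of $v$ must avoid: for a negative dominating vertex, a proper colouring $\kappa$ forces $\kappa(w) \neq -\kappa(v)$ for every other vertex $w$, rather than $\kappa(w) \neq \kappa(v)$. So first I would fix integers $\lambda \geqslant \mu \geqslant 0$ and a $(\lambda,\mu)$-colour set $C$ containing a paired-colour set $P$ and an unpaired-colour set $U$, and observe that a function $\kappa \colon V(|\Sigma|) \to C$ with $\kappa(v) = c$ is a proper $C$-colouring of $\Sigma$ if and only if its restriction to $V(|\Sigma \backslash v|)$ is a proper $(C \backslash \{-c\})$-colouring of $\Sigma \backslash v$. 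Hence $f(\Sigma, \lambda, \mu) = \sum_{c \in C} N(c)$, where $N(c)$ is the number of proper $(C \backslash \{-c\})$-colourings of $\Sigma \backslash v$, and the task reduces to identifying, for each of the three possible types of $c$, what kind of colour set $C \backslash \{-c\}$ is.

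Then I would carry out the case analysis. If $c \in U$, then $-c \notin C$ (indeed $-c \neq 0$; $-c \notin U$ since $-U \cap U = \varnothing$; and $-c \notin P$ since $P = -P$ is disjoint from $U$), so $C \backslash \{-c\} = C$ is again a $(\lambda,\mu)$-colour set and $N(c) = f(\Sigma \backslash v, \lambda, \mu)$; there are $|U| = \mu$ such $c$. If $c \in P$, then $-c \in P$, and writing $P' = P \backslash \{c, -c\}$ and $U' = U \cup \{c\}$ exhibits $C \backslash \{-c\}$ as a $(\lambda - 1, \mu + 1)$-colour set (one checks $-P' = P'$, $|U'| = \mu + 1$, $-U' \cap U' = \varnothing$, and the size/parity conditions of Definition~\ref{dfn:bivarcolourset}), so $N(c) = f(\Sigma \backslash v, \lambda - 1, \mu + 1)$; the number of such $c$ is $|P|$, which equals $\lambda - \mu$ if $\lambda - \mu$ is even and $\lambda - \mu - 1$ if $\lambda - \mu$ is odd. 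Finally, if $\lambda - \mu$ is odd and $c = 0$, then $C \backslash \{-c\} = C \backslash \{0\} = P \cup U$ is a $(\lambda - 1, \mu)$-colour set, contributing a single term $f(\Sigma \backslash v, \lambda - 1, \mu)$. Summing over $c$ gives, whenever $\mu \geqslant 0$ and $\lambda - \mu \geqslant 2$ (so that $(\lambda - 1, \mu + 1)$ and $(\lambda - 1, \mu)$ are admissible),
\begin{align*}
f(\Sigma, \lambda, \mu) &= \mu \, f(\Sigma \backslash v, \lambda, \mu) + (\lambda - \mu)\, f(\Sigma \backslash v, \lambda - 1, \mu + 1) && \text{if } \lambda - \mu \text{ is even}, \\
f(\Sigma, \lambda, \mu) &= \mu \, f(\Sigma \backslash v, \lambda, \mu) + (\lambda - \mu - 1)\, f(\Sigma \backslash v, \lambda - 1, \mu + 1) + f(\Sigma \backslash v, \lambda - 1, \mu) && \text{if } \lambda - \mu \text{ is odd}.
\end{align*}

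To finish, I would translate these numerical identities into the claimed polynomial ones. Since the parity of $(\lambda - 1) - (\mu + 1)$ equals that of $\lambda - \mu$ while the parity of $(\lambda - 1) - \mu$ is the opposite, each occurrence of $f$ above may be replaced by the appropriate $\mathsf E$ or $\mathsf O$. For each fixed $\mu$ the two sides then agree as polynomials in $\lambda$ on the infinitely many admissible values, hence agree identically; letting $\mu$ range over all nonnegative integers yields the two identities of the proposition. (As in Proposition~\ref{prop:posuniv}, the degenerate case $\Sigma = K_1$ may be checked directly; it is in any case covered, since then $\Sigma \backslash v = K_0$ has exactly one proper colouring with respect to any colour set.) The main obstacle is purely bookkeeping in the middle case $c \in P$: one must check carefully that deleting the single element $-c$ from the symmetric set $P$ and re-designating its now partnerless mirror $c$ as an unpaired colour produces a set meeting all requirements of Definition~\ref{dfn:bivarcolourset} for the parameters $(\lambda - 1, \mu + 1)$, with the size of the new paired-colour set correctly adjusted according to the parity of $\lambda - \mu$.
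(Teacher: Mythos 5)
Your proposal is correct and follows essentially the same route as the paper's proof: condition on the colour $c=\kappa(v)$, observe that the neighbours must avoid $-c$ so that the restriction of $\kappa$ is a proper $(C\backslash\{-c\})$-colouring of $\Sigma\backslash v$, and identify $C\backslash\{-c\}$ as a $(\lambda,\mu)$-, $(\lambda-1,\mu+1)$-, or $(\lambda-1,\mu)$-colour set according as $c\in U$, $c\in P$, or $c=0$. Your write-up is in fact somewhat more explicit than the paper's (e.g.\ the verification that $C\backslash\{-c\}$ meets Definition~\ref{dfn:bivarcolourset} and the final interpolation from numerical to polynomial identities), but the underlying argument is identical.
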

\begin{proof}
    Suppose that $v$ is a negative dominating vertex of $\Sigma$.
    It is easy to check that both equations hold when $\Sigma$ is the signed graph $K_1$.
    We will then prove both equations combinatorially.
    Let $\lambda$ and $\mu$ be integers such that $\lambda \geqslant \mu \geqslant 0$.
    Let $C$ be a $(\lambda, \mu)$-colour set containing a paired-colour set $P$ and an unpaired-colour set $U$.
    Let $\kappa$ be a proper $C$-colouring and let $D = C \backslash \{-\kappa(v)\}$.
    For the first equation, it suffices to show that if $\lambda-\mu \geqslant 2$, then
    \begin{align*}
        \mathsf E(\Sigma,\lambda,\mu) = \mu \cdot \mathsf E(\Sigma \backslash v,\lambda,\mu) + (\lambda-\mu) \cdot \mathsf E(\Sigma \backslash v,\lambda-1,\mu+1).
    \end{align*}
    Suppose that $\lambda-\mu$ is even.
    If $\kappa(v) \in U$ then $D=C$ so the number of such $\kappa$ is equal to $\mu \cdot \mathsf E(\Sigma \backslash v,\lambda,\mu)$.
    If $\kappa(v) \in P$ then $D$ is a $(\lambda-1, \mu+1)$-colour set.
    Hence, the number of $\kappa$ such that $\kappa(v) \in P$ is equal to $(\lambda-\mu) \cdot \mathsf E(\Sigma \backslash v,\lambda-1,\mu+1)$.
    Altogether, we obtain the first equation.

    For the second equation, it suffices to show that if $\lambda-\mu \geqslant 2$, then
        \begin{align*}
            \mathsf O(\Sigma,\lambda,\mu) = \mu \cdot \mathsf O(\Sigma \backslash v,\lambda,\mu) + (\lambda-\mu-1) \cdot \mathsf O(\Sigma \backslash v,\lambda-1,\mu+1) + \mathsf E(\Sigma \backslash v,\lambda-1,\mu).
        \end{align*}
    Suppose that $\lambda-\mu$ is odd.
    If $\kappa(v) \in U$ then $D=C$.
    If $\kappa(v) \in P$ then $D$ is a $(\lambda-1, \mu+1)$-colour set.
    Lastly, if $\kappa(v) = 0$ then $D$ is a $(\lambda-1, \mu)$-colour set.
    Altogether, we obtain the second equation.
\end{proof}

Let $\Sigma$ be a signed graph and let $v$ be a positive or negative dominating vertex of $\Sigma$.
If $\Sigma \backslash v$ is balanced, then, by Theorem~\ref{thm:bivarpolyiffbal}, we have $\mathsf E(\Sigma \backslash v,x,y) = \mathsf O(\Sigma \backslash v,x,y)$.
Applying Propositions~\ref{prop:posuniv} and \ref{prop:neguniv}, we obtain a relation between $\mathsf E(\Sigma,x,y)$ and $\mathsf O(\Sigma,x,y)$ below.

\begin{cor}
\label{cor:balancedcone}
    Let $\Sigma$ be a signed graph and let $v$ be a positive or negative dominating vertex of $\Sigma$.
    If the signed graph $\Sigma \backslash v$ is balanced then 
    \[
    \mathsf E(\Sigma,x,y) - \mathsf O(\Sigma,x,y) = \mathsf E(\Sigma \backslash v,x-1,y+1) - \mathsf E(\Sigma \backslash v,x-1,y).
    \]
\end{cor}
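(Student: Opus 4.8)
The plan is to substitute the dominating-vertex deletion formulae from Propositions~\ref{prop:posuniv} and~\ref{prop:neguniv} directly and subtract, using the hypothesis that $\Sigma \backslash v$ is balanced to replace every occurrence of $\mathsf O(\Sigma \backslash v, \cdot\,, \cdot)$ by $\mathsf E(\Sigma \backslash v, \cdot\,, \cdot)$ via Theorem~\ref{thm:bivarpolyiffbal}. I would first split into two cases according to whether $v$ is a positive dominating vertex or a negative dominating vertex. These cases may overlap (for instance when $\Sigma = K_1$, where $\Sigma \backslash v = K_0$ is vacuously balanced), but since each case independently produces the claimed identity, the overlap causes no trouble.

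In the positive case, Proposition~\ref{prop:posuniv} expresses $\mathsf E(\Sigma,x,y)$ in terms of $\mathsf E(\Sigma\backslash v,x-1,y-1)$ and $\mathsf E(\Sigma\backslash v,x-1,y+1)$, and $\mathsf O(\Sigma,x,y)$ in terms of $\mathsf O(\Sigma\backslash v,x-1,y-1)$, $\mathsf O(\Sigma\backslash v,x-1,y+1)$, and $\mathsf E(\Sigma\backslash v,x-1,y)$. Since $\Sigma \backslash v$ is balanced, Theorem~\ref{thm:bivarpolyiffbal} lets me rewrite the two $\mathsf O(\Sigma\backslash v,\cdot)$ terms as the corresponding $\mathsf E(\Sigma\backslash v,\cdot)$ terms, after which subtracting the two displayed formulae gives
\[
\mathsf E(\Sigma,x,y) - \mathsf O(\Sigma,x,y) = \bigl((x-y)-(x-y-1)\bigr)\,\mathsf E(\Sigma\backslash v,x-1,y+1) - \mathsf E(\Sigma\backslash v,x-1,y),
\]
where the $y\cdot\mathsf E(\Sigma\backslash v,x-1,y-1)$ terms have cancelled. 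Since the bracketed coefficient equals $1$, this is exactly the asserted right-hand side. The negative case is handled identically using Proposition~\ref{prop:neguniv}: the $y\cdot\mathsf E(\Sigma\backslash v,x,y)$ terms cancel after invoking balance, the $\mathsf E(\Sigma\backslash v,x-1,y+1)$ coefficients again combine to $1$, and the leftover is the same expression.

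There is essentially no obstacle: the entire argument is one line of arithmetic once Propositions~\ref{prop:posuniv} and~\ref{prop:neguniv} and the balance criterion of Theorem~\ref{thm:bivarpolyiffbal} are available. The only point worth a moment's care is that Propositions~\ref{prop:posuniv} and~\ref{prop:neguniv} were established under side conditions on $(\lambda,\mu)$; but since those propositions are stated as identities of polynomials in $x$ and $y$ (valid for all $x,y$), I may use them as such, so no additional bookkeeping about admissible $(\lambda,\mu)$ is required.
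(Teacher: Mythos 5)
Your proposal is correct and follows exactly the paper's argument: the paper likewise invokes Theorem~\ref{thm:bivarpolyiffbal} to identify $\mathsf O(\Sigma\backslash v,\cdot,\cdot)$ with $\mathsf E(\Sigma\backslash v,\cdot,\cdot)$ and then subtracts the formulae of Propositions~\ref{prop:posuniv} and~\ref{prop:neguniv}. The cancellation you describe is the whole content of the paper's (essentially one-line) proof, so nothing further is needed.
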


\subsection{Signed threshold graphs}
\label{subsec:signedthresh}

A signed graph is called a \textbf{signed threshold graph} if it is the signed graph $K_1$ or it can be obtained from $K_1$ by repeatedly adding an isolated vertex, positive dominating vertex, or negative dominating vertex.
Let $n \geqslant 0$ be an integer and let $\bm{a} =(a_1,a_2,\dots,a_n) \in \{-1,0,1\}^n$.
In particular, let $() \in \{-1,0,1\}^0$ denote the empty tuple.
We denote by $\mathcal{T}_{\bm{a}}$ the signed threshold graph that can be constructed as follows: we start with $K_1$ and then from $i=1$ to $i=n$, we add an isolated vertex if $a_i = 0$, positive dominating vertex if $a_i = 1$, and negative dominating vertex if $a_i = -1$.
In particular, we have that $\mathcal{T}_{()} = K_1$.
Observe that the underlying graphs of signed threshold graphs are the unsigned \emph{threshold graphs} \cite{MP95}.

Compared with Theorem~\ref{thm:iechrombivar}, Propositions~\ref{prop:posuniv} and \ref{prop:neguniv} allow us to compute the even and odd bivariate chromatic polynomials of signed threshold graphs much more efficiently via use of the recursive deletion formulae.
Consequently, the even and odd chromatic polynomials of signed threshold graphs can also be computed more efficiently using these formulae rather than directly using Theorem~\ref{thm:iechrom}.

\begin{ex}
\label{ex:signedthresh}
    Let $\bm{a} = (1, -1, 0, -1, 1, 0, 1)$.
    Then
    \begin{align*}
        \mathsf E(\mathcal{T}_{\bm{a}},x,y) =& \, (x-1)(x-3)(x^6-15x^5+6x^4y+99x^4-71x^3 y -345x^3+4x^2 y^2+325x^2 y \\
        & +618x^2-36x y^2-650x y -440x+80y^2+424y),\\
        \mathsf O(\mathcal{T}_{\bm{a}},x,y) =& \, (x-1)(x-3)(x^6-15x^5+6x^4y+99x^4-71x^3 y -359x^3+4x^2 y^2+325x^2 y \\
        & +738x^2-36x y^2-666x y-792x+80y^2+488y+328).
    \end{align*}
    Thus, we also have that
    \begin{align*}
        \mathsf E(\mathcal{T}_{\bm{a}},x) &= x(x-1)(x-2)(x-3)(x^4-13x^3+73x^2-199x+220), \\
        \mathsf O(\mathcal{T}_{\bm{a}},x) &= (x-1)^2(x-3)(x^5-14x^4+85x^3-274x^2+464x-328).
    \end{align*}
\end{ex}

The following lemma provides a relation between the even bivariate chromatic polynomial of a signed graph $\Sigma$ and the chromatic polynomials of $\Sigma^+$.

\begin{lem}
\label{lem:lambdauequal}
    Let $\Sigma$ be a signed graph.
    Then $\mathsf E(\Sigma,x,x) = \mathsf E(\Sigma^+,x) = \mathsf O(\Sigma^+,x)$.
\end{lem}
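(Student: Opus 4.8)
The plan is to give a direct combinatorial argument: evaluate both sides at integer pairs of the form $(\lambda,\lambda)$, then pass from equality of functions to equality of polynomials by interpolation. The starting observation is that a $(\lambda,\lambda)$-colour set has a very restrictive shape. Taking $\mu=\lambda$ in Definition~\ref{dfn:bivarcolourset} forces $\lambda-\mu=0$, which is even, so $|P|=0$; hence $P=\varnothing$ and $C=U$ is simply a set of $\lambda$ nonzero integers no two of which are negatives of each other, i.e.\ $0\notin C$ and $-C\cap C=\varnothing$.

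The key step is to check that, for such a colour set $C$, the negative-edge constraints in Definition~\ref{dfn:properCcolouring} are automatically satisfied. Indeed, if $\{v,w\}$ is a negative edge and $\kappa:V(|\Sigma|)\to C$ is any function, then $\kappa(v)=\sigma(\{v,w\})\kappa(w)=-\kappa(w)$ would force both $\kappa(w)$ and $-\kappa(w)$ to lie in $C$, impossible since $0\notin C$ and $-C\cap C=\varnothing$. Therefore a proper $C$-colouring of $\Sigma$ is precisely a function $\kappa$ with $\kappa(v)\neq\kappa(w)$ for every positive edge $\{v,w\}$, that is, a proper colouring of the underlying graph $|\Sigma^+|=(V(|\Sigma|),\sigma^{-1}(1))$ using the $\lambda$-element set $C$. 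Consequently $f(\Sigma,\lambda,\lambda)=\chi(|\Sigma^+|,\lambda)$ for every integer $\lambda\geqslant 0$.

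To conclude, I would note that since $\lambda-\lambda=0$ is even, $f(\Sigma,\lambda,\lambda)=\mathsf E(\Sigma,\lambda,\lambda)$ for all $\lambda\geqslant 0$; as $\mathsf E(\Sigma,x,x)-\chi(|\Sigma^+|,x)$ is a polynomial in $x$ vanishing at infinitely many points, it is identically zero, so $\mathsf E(\Sigma,x,x)=\chi(|\Sigma^+|,x)$. Finally, $\Sigma^+$ is all-positive, so by Corollary~\ref{cor:negedges} (or the discussion following Corollary~\ref{cor:switchisomchrom}) we have $\mathsf E(\Sigma^+,x)=\mathsf O(\Sigma^+,x)=\chi(|\Sigma^+|,x)$, which completes the chain of equalities.

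I do not expect a genuine obstacle; the only points needing care are reading off the structure of a $(\lambda,\lambda)$-colour set from Definition~\ref{dfn:bivarcolourset} and the routine passage from equality of polynomial functions on $\mathbb Z_{\geqslant 0}$ to equality of polynomials. An alternative route avoiding interpolation is to substitute $y=x$ directly into the expansion of Theorem~\ref{thm:iechrombivar}: in the even case $\delta=0$, and setting $y=x$ annihilates every summand except those with $c(\Sigma\vert Y)=b(\Sigma\vert Y)=p(\Sigma\vert Y)$, i.e.\ those $Y\subseteq\sigma^{-1}(1)$, leaving exactly the subgraph (Whitney) expansion of $\chi(|\Sigma^+|,x)$, which is Theorem~\ref{thm:iechrom} applied to the all-positive graph $\Sigma^+$.
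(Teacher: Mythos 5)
Your proposal is correct and follows essentially the same route as the paper: identify that a $(\lambda,\lambda)$-colour set satisfies $P=\varnothing$ and $C=U$, observe that the negative-edge constraints are then vacuous so proper $C$-colourings of $\Sigma$ coincide with proper colourings of $|\Sigma^+|$, and conclude by polynomial interpolation together with Corollary~\ref{cor:negedges}. The alternative argument via substituting $y=x$ in Theorem~\ref{thm:iechrombivar} is a valid bonus but not needed.
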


\begin{proof}
    Denote the underlying graph of $\Sigma^+$ as $\Gamma^+$.
    Since $\Sigma^+$ is all-positive, by Corollary~\ref{cor:negedges}, we have $\mathsf E(\Sigma^+,x) = \mathsf O(\Sigma^+,x) = \chi(\Gamma^+,x)$.
    We will then prove that $\mathsf E(\Sigma,x,x) = \chi(\Gamma^+,x)$ by showing that $f(\Sigma,\lambda,\lambda) = f(\Gamma^+,\lambda)$ for all integers $\lambda \geqslant 0$.
    Let $\lambda \geqslant 0$ be an integer and let $C$ be a $(\lambda, \lambda)$-colour set containing a paired-colour set $P$ and an unpaired-colour set $U$.
    It follows that $P = \varnothing$, $C = U$, and $\vert C \vert = \lambda$.
    Suppose that $V = V(|\Sigma|) = V(\Gamma^+)$ and let $\kappa : V \to C$ be a function.
    If $\kappa$ is a proper $C$-colouring of $\Sigma$, then $\kappa$ is also a proper $C$-colouring of $\Gamma^+$.
    Conversely, suppose that $\kappa$ is a proper $C$-colouring of $\Gamma^+$.
    Let $\{v, w\}$ be a negative edge of $\Sigma$ so clearly, $\{v, w\} \notin E(\Gamma^+)$.
    Since $C = U$, we have $\kappa(v) \neq -\kappa(w)$.
    This implies that $\kappa$ is also a proper $C$-colouring of $\Sigma$.
    Therefore, we have proved that $\kappa$ is a proper $C$-colouring of $\Sigma$ if and only if $\kappa$ is a proper $C$-colouring of $\Gamma^+$.
    By definition, it follows that $f(\Sigma,\lambda,\lambda) = f(\Gamma^+,\lambda)$, as desired.
\end{proof}

In the next theorem, equality between the signed threshold graphs refers to their equality as isomorphism classes or unlabeled signed graphs.

\begin{thm}
\label{thm:signthresh}
    Let $n \geqslant 0$ be an integer and let $\bm{a}, \bm{b} \in \{0,1\}^n \cup \{\pm 1\}^n$.
    Then
    \begin{align*}
        \bm{a} = \bm{b} \iff \mathcal{T}_{\bm{a}} = \mathcal{T}_{\bm{b}} \iff \mathscr B(\mathcal{T}_{\bm{a}},x,y) = \mathscr B(\mathcal{T}_{\bm{b}},x,y) \iff \mathsf E(\mathcal{T}_{\bm{a}},x,y) = \mathsf E(\mathcal{T}_{\bm{b}},x,y).
    \end{align*}
\end{thm}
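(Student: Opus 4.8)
The three implications $\bm a = \bm b \Rightarrow \mathcal{T}_{\bm a} = \mathcal{T}_{\bm b} \Rightarrow \mathscr B(\mathcal{T}_{\bm a},x,y) = \mathscr B(\mathcal{T}_{\bm b},x,y) \Rightarrow \mathsf E(\mathcal{T}_{\bm a},x,y) = \mathsf E(\mathcal{T}_{\bm b},x,y)$ are immediate: the first because $\mathcal{T}_{\bm a}$ is defined purely from $\bm a$, the second by Proposition~\ref{prop:isombivarchrom}, and the third because $\mathscr B = (\mathsf E,\mathsf O)$. So the entire content is the remaining implication $\mathsf E(\mathcal{T}_{\bm a},x,y) = \mathsf E(\mathcal{T}_{\bm b},x,y) \Rightarrow \bm a = \bm b$, and the plan is to recover $\bm a$ from the polynomial $\mathsf E(\mathcal{T}_{\bm a},x,y)$ by reading off two invariants from it: the chromatic polynomial of the positive part (via the diagonal $y=x$) and the number of negative edges.

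For the first invariant, Lemma~\ref{lem:lambdauequal} gives $\mathsf E(\mathcal{T}_{\bm a},x,x) = \mathsf E(\mathcal{T}_{\bm a}^+,x) = \chi(\lvert\mathcal{T}_{\bm a}^+\rvert,x)$. A negatively added vertex contributes no positive edges while a positively added one contributes the usual cone edges, so $\mathcal{T}_{\bm a}^+$ is the all-positive threshold graph $\mathcal{T}_{\bm a^\circ}$, where $\bm a^\circ \in \{0,1\}^n$ is obtained from $\bm a$ by replacing each $-1$ by $0$. Hence $\mathsf E(\mathcal{T}_{\bm a},x,x)$ depends only on the set $S(\bm a) := \{\,j : a_j = 1\,\}$, and the key claim I would establish is that it determines $S(\bm a)$: for every $\bm c \in \{0,1\}^n$,
\[
\chi(\lvert\mathcal{T}_{\bm c}\rvert,x) = \prod_{i=0}^{n}\bigl(x - r_i(\bm c)\bigr), \qquad r_i(\bm c) := \bigl\lvert\{\,j\in\{1,\dots,n\} : j>i,\ c_j = 1\,\}\bigr\rvert,
\]
and the multiset $\{\,r_i(\bm c) : 0\le i\le n\,\}$ recovers $S(\bm c)$.

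The product formula follows by induction on $n$ from the elementary identities $\chi(\lvert T\cup K_1\rvert,x) = x\cdot\chi(\lvert T\rvert,x)$ (appending $0$ to the creation sequence) and $\chi(\lvert K_1\veeplus T\rvert,x) = x\cdot\chi(\lvert T\rvert,x-1)$ (appending $1$). For the reconstruction, write $S(\bm c) = \{s_1 < \dots < s_m\}$ with $s_0 := 0$ and $s_{m+1} := n+1$; a direct count shows the value $m-k$ occurs in the multiset exactly $s_{k+1}-s_k$ times, so $m = \lvert S(\bm c)\rvert$ is the largest element of the multiset and the multiplicities of $m, m-1, \dots, 0$ return $s_1, s_2, \dots, s_m$ in turn. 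This reconstruction — morally the statement that the chromatic polynomial of a threshold graph remembers its creation sequence — is the main obstacle; everything around it is bookkeeping.

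For the second invariant, by Corollary~\ref{cor:negedges} the coefficient of $x^{n-1}y$ in $\mathsf E(\mathcal{T}_{\bm a},x,y)$ equals the number of negative edges of $\mathcal{T}_{\bm a}$, which from the construction is $\sum_{i:\,a_i=-1} i$; in particular this coefficient is $0$ precisely when $\bm a\in\{0,1\}^n$. Putting the pieces together: if $\mathsf E(\mathcal{T}_{\bm a},x,y) = \mathsf E(\mathcal{T}_{\bm b},x,y)$, then comparing diagonals and invoking the key claim yields $S(\bm a) = S(\bm b) =: S$, while comparing the coefficient of $x^{n-1}y$ forces $\bm a$ and $\bm b$ to lie in the same one of $\{0,1\}^n$ and $\{\pm 1\}^n$. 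In the first case both are the indicator tuple of $S$, and in the second both have entry $1$ on $S$ and $-1$ off $S$; either way $\bm a = \bm b$, which closes the chain of equivalences.
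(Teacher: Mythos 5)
Your proposal is correct and follows essentially the same route as the paper: reduce to the last implication, use the diagonal specialisation $y=x$ together with Lemma~\ref{lem:lambdauequal} to recover the all-positive threshold graph $\mathcal{T}_{\bm a}^{+}=\mathcal{T}_{\bm a^{\circ}}$ and hence the set of positions equal to $1$, and use the coefficient of $x^{n-1}y$ from Corollary~\ref{cor:negedges} to separate the two families of tuples. The only difference is cosmetic: where the paper cites \cite[Theorem 2.1]{CM19} for the factorisation of the chromatic polynomial of a threshold graph and the injectivity of the creation-sequence correspondence, you prove the product formula by induction and spell out the reconstruction of the creation sequence from the multiset of roots, which makes the argument self-contained.
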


\begin{proof}
    It suffices to prove that if $\mathsf E(\mathcal{T}_{\bm{a}},x,y) = \mathsf E(\mathcal{T}_{\bm{b}},x,y)$ then $\bm{a} = \bm{b}$, or equivalently, if $\bm{a} \neq \bm{b}$ then $\mathsf E(\mathcal{T}_{\bm{a}},x,y) \neq \mathsf E(\mathcal{T}_{\bm{b}},x,y)$.
    First, suppose that $\bm{a} \neq \bm{b}$ where $\bm{a}, \bm{b} \in \{0,1\}^n$.
    Then, both $\mathcal{T}_{\bm{a}}$ and $\mathcal{T}_{\bm{b}}$ are all-positive.
    Let $\bm{a} = (a_1,\dots,a_n)$ and $\bm{b} = (b_1,\dots,b_n)$.
    Furthermore, let $\bm{s} = (s_1,\dots,s_n)$ and $\bm{t} = (t_1,\dots,t_n)$ such that $\displaystyle s_i = \sum_{j=0}^{i-1} a_{n-j}$ and $\displaystyle t_i = \sum_{j=0}^{i-1} b_{n-j}$ for all $i \in \{1,\dots,n\}$.
    Note that the entries of both $\bm{s}$ and $\bm{t}$ monotonically increase as the indices increase.
    By Corollary~\ref{cor:negedges} and \cite[Theorem 2.1]{CM19} \footnote{We warn the reader of a typo in the conclusion of Theorem 2.1 in \cite{CM19}.}, we have 
    \begin{align*}
        \mathsf E(\mathcal{T}_{\bm{a}},x,y) = \chi(\left\vert \mathcal{T}_{\bm{a}} \right\vert,x) = x \prod_{i=1}^n (x - s_i) \text{   and   } \mathsf E(\mathcal{T}_{\bm{b}},x,y) = \chi(\left\vert \mathcal{T}_{\bm{b}} \right\vert,x) = x \prod_{i=1}^n (x - t_i).
    \end{align*}
    Since $\bm{a} \neq \bm{b}$, we have that $\bm{s} \neq \bm{t}$ and therefore, $\mathsf E(\mathcal{T}_{\bm{a}},x,y) \neq \mathsf E(\mathcal{T}_{\bm{b}},x,y)$.

    Next, suppose that $\mathsf E(\mathcal{T}_{\bm{a}},x,y) = \mathsf E(\mathcal{T}_{\bm{b}},x,y)$ where $\bm{a}, \bm{b} \in \{\pm 1\}^n$.
    In particular, we have $\mathsf E(\mathcal{T}_{\bm{a}},x,x) = \mathsf E(\mathcal{T}_{\bm{b}},x,x)$.
    By Lemma~\ref{lem:lambdauequal}, we obtain $\mathsf E(\mathcal{T}_{\bm{a}}^+,x) = \mathsf E(\mathcal{T}_{\bm{b}}^+,x)$.
    Let $\bm{a}^+$, $\bm{b}^+ \in \{0,1\}^n$ be the tuples obtained from $\bm{a}$ and $\bm{b}$, respectively, by replacing all instances of $-1$ with $0$.
    It follows that $\mathcal{T}_{\bm{a}}^+ = \mathcal{T}_{\bm{a}^+}$ and $\mathcal{T}_{\bm{b}}^+ = \mathcal{T}_{\bm{b}^+}$, which means that $\mathsf E(\mathcal{T}_{\bm{a}^+},x) = \mathsf E(\mathcal{T}_{\bm{b}^+},x)$.
    By Corollary~\ref{cor:negedges}, we have that $\mathsf E(\mathcal{T}_{\bm{a}^+},x,y) = \mathsf E(\mathcal{T}_{\bm{a}^+},x) = \mathsf E(\mathcal{T}_{\bm{b}^+},x) = \mathsf E(\mathcal{T}_{\bm{b}^+},x,y)$.
    Since $\bm{a}^+$, $\bm{b}^+ \in \{0,1\}^n$, we conclude that $\bm{a}^+ = \bm{b}^+$ as argued in the previous case above.
    This immediately implies that $\bm{a} = \bm{b}$.
    
    Lastly, suppose that $\bm{a} \neq \bm{b}$ where, without loss of generality, we assume that $\bm{a} \in \{0,1\}^n$ and $\bm{b} \in \{\pm 1\}^n$.
    If $\bm{b} \in \{1\}^n$ then we can apply the same argument as in the first case above.
    Otherwise, observe that $\mathcal{T}_{\bm{a}}$ is all-positive while $\mathcal{T}_{\bm{b}}$ has at least one negative edge.
    By Corollary~\ref{cor:negedges}, we have $\mathsf E(\mathcal{T}_{\bm{a}},x,y) = \mathsf E(\mathcal{T}_{\bm{a}},x) \in \mathbb{Z}[x]$ while the coefficient of $x^{n-1}y$ in $\mathsf E(\mathcal{T}_{\bm{b}},x,y)$ is nonzero.
    Therefore, we conclude that $\mathsf E(\mathcal{T}_{\bm{a}},x,y) \neq \mathsf E(\mathcal{T}_{\bm{b}},x,y)$.
    This completes the proof.
\end{proof}

We conjecture that for all integers $n \geqslant 0$, the set $\{0,1\}^n \cup \{\pm 1\}^n$ in the assumption of Theorem~\ref{thm:signthresh} can be extended to $\{-1,0,1\}^n$.
Applying the formulae in Propositions~\ref{prop:posuniv} and \ref{prop:neguniv}, we have computationally verified that Conjecture~\ref{conj:signedthresh} below is true up to $n =12$.

\begin{conj}
\label{conj:signedthresh}
    Let $n \geqslant 0$ be an integer and let $\bm{a}, \bm{b} \in \{-1,0,1\}^n$.
    Then
    \begin{align*}
        \bm{a} = \bm{b} \iff \mathcal{T}_{\bm{a}} = \mathcal{T}_{\bm{b}} \iff \mathscr B(\mathcal{T}_{\bm{a}},x,y) = \mathscr B(\mathcal{T}_{\bm{b}},x,y) \iff \mathsf E(\mathcal{T}_{\bm{a}},x,y) = \mathsf E(\mathcal{T}_{\bm{b}},x,y).
    \end{align*}
\end{conj}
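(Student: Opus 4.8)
The only implication in the displayed chain that needs genuine work is that $\mathsf E(\mathcal{T}_{\bm{a}},x,y)=\mathsf E(\mathcal{T}_{\bm{b}},x,y)$ forces $\bm{a}=\bm{b}$; the other implications around the cycle then follow (the middle two are trivial, and $\mathcal{T}_{\bm{a}}\cong\mathcal{T}_{\bm{b}}\Rightarrow\mathscr B(\mathcal{T}_{\bm{a}},x,y)=\mathscr B(\mathcal{T}_{\bm{b}},x,y)$ is Proposition~\ref{prop:isombivarchrom}). I would first reformulate. Writing $\bm{a}=(a_1,\dots,a_n)$, the positive part $\mathcal{T}_{\bm{a}}^{+}$ is the unsigned threshold graph $\lvert\mathcal{T}_{\bm{a}^{+}}\rvert$, where $\bm{a}^{+}\in\{0,1\}^n$ is obtained from $\bm{a}$ by replacing every $-1$ with $0$; likewise the negative part $\mathcal{T}_{\bm{a}}^{-}$ has underlying graph $\lvert\mathcal{T}_{\bm{a}^{-}}\rvert$, where $\bm{a}^{-}\in\{0,1\}^n$ marks the positions of the $-1$'s. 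Since $\bm{a}^{+}$ and $\bm{a}^{-}$ have disjoint supports, the pair $(\bm{a}^{+},\bm{a}^{-})$ recovers $\bm{a}$, and by the $\{0,1\}^n$ part of Theorem~\ref{thm:signthresh} each of $\bm{a}^{+},\bm{a}^{-}$ is in turn recovered from $\chi(\lvert\mathcal{T}_{\bm{a}^{+}}\rvert,x)$, $\chi(\lvert\mathcal{T}_{\bm{a}^{-}}\rvert,x)$ respectively. Hence it suffices to extract both of these univariate polynomials from $\mathsf E(\mathcal{T}_{\bm{a}},x,y)$, and the first is free: by Lemma~\ref{lem:lambdauequal}, $\mathsf E(\mathcal{T}_{\bm{a}},x,x)=\mathsf E(\mathcal{T}_{\bm{a}}^{+},x)=\chi(\lvert\mathcal{T}_{\bm{a}^{+}}\rvert,x)$.

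The real task is to recover $\chi(\lvert\mathcal{T}_{\bm{a}}^{-}\rvert,x)$, which I would do by induction on $n$ via the deletion formulae, peeling the vertex $v$ added at the last step: $\mathcal{T}_{\bm{a}}\backslash v=\mathcal{T}_{\bm{a}'}$ with $\bm{a}'=(a_1,\dots,a_{n-1})$, and $v$ is isolated, positive dominating, or negative dominating according as $a_n$ is $0,1,-1$. The claim to establish is that $g:=\mathsf E(\mathcal{T}_{\bm{a}},x,y)$ determines $a_n$ together with $h:=\mathsf E(\mathcal{T}_{\bm{a}'},x,y)$; peeling $\bm{a}$ and $\bm{b}$ in parallel and invoking the inductive hypothesis then closes the argument. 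Whether $a_n=1$ is already decided by the tuple $\bm{a}^{+}$ recovered above. When $a_n=1$, Proposition~\ref{prop:posuniv} gives $g(x,y)=y\,h(x-1,y-1)+(x-y)\,h(x-1,y+1)$, and this $\mathbb{Q}$-linear map $h\mapsto g$ is injective: if its value vanishes and $d=\deg_y h$ with leading coefficient $c_d(x)$, then the $y^{d+1}$-terms cancel while the coefficient of $y^{d}$ equals $c_d(x)\,(x+1-2d)$, which must vanish, forcing $c_d\equiv 0$, a contradiction. Thus $g$ determines $h$ and we recurse on $\bm{a}'\in\{-1,0,1\}^{n-1}$.

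It remains to handle $a_n\in\{0,-1\}$, where $g=x\cdot h$ if $a_n=0$ and $g(x,y)=y\,h(x,y)+(x-y)\,h(x-1,y+1)$ if $a_n=-1$ (Proposition~\ref{prop:neguniv}). The plan is to prove the dichotomy $a_n=0\iff x\mid g$, which amounts to showing that for $h=\mathsf E(\mathcal{T}_{\bm{a}'},x,y)$ one never has $h(0,y)\equiv h(-1,y+1)$ as polynomials in $y$; this can be approached through Theorem~\ref{thm:iechrombivar}, using that at $x=0$ only balanced spanning subgraphs with no all-positive component contribute, so $h(0,y)$ is an explicit alternating sum over such subgraphs of a threshold graph. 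Once the two cases are separated one must invert the $a_n=-1$ operator on the class of $\mathsf E$-polynomials of signed threshold graphs, and then recurse as before.

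The main obstacle is precisely this last point. In contrast to the positive case, for $a_n=-1$ the two copies of $h$ sit at the shifted arguments $(x,y)$ and $(x-1,y+1)$, and the leading-$y$-coefficient computation no longer produces a contradiction: it only shows $c_d$ is constant and imposes a first-order difference equation on $c_{d-1}$. Establishing injectivity here --- equivalently, completing the extraction of $\chi(\lvert\mathcal{T}_{\bm{a}}^{-}\rvert,x)$ from $\mathsf E(\mathcal{T}_{\bm{a}},x,y)$, presumably by analysing the coefficients of $\mathsf E$ as a polynomial in $x-y$ (whose top degree should equal the matching number of $\lvert\mathcal{T}_{\bm{a}}^{-}\rvert$) --- is where the difficulty concentrates, and is the part for which the stated computational verification up to $n=12$ is the main evidence.
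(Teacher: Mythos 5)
The statement you are proving is Conjecture~\ref{conj:signedthresh}, which the paper leaves \emph{open}: no proof is given there, only the remark that it has been verified computationally up to $n=12$ (the proved result, Theorem~\ref{thm:signthresh}, covers only $\bm a,\bm b\in\{0,1\}^n\cup\{\pm1\}^n$). So there is no argument in the paper to compare yours against, and your proposal does not close the gap either --- as you yourself say. The parts you do establish are sound and partly overlap with the paper's proof of Theorem~\ref{thm:signthresh}: recovering $\bm a^{+}$ from $\mathsf E(\mathcal{T}_{\bm a},x,x)$ via Lemma~\ref{lem:lambdauequal} and the unsigned threshold-graph factorisation is exactly the paper's mechanism, and your reduction of the whole problem to extracting $\chi(\lvert\mathcal{T}_{\bm a}^{-}\rvert,x)$, together with the injectivity of the positive-dominating-vertex operator $h\mapsto y\,h(x-1,y-1)+(x-y)\,h(x-1,y+1)$ (your leading-$y$-coefficient computation is correct up to the harmless point that the surviving coefficient is $(x-2d)\,c_d(x-1)$ rather than $(x+1-2d)\,c_d(x)$), is genuine partial progress beyond what the paper records.

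The two steps you leave unproved are, however, precisely the substance of the conjecture, not technical loose ends. First, the dichotomy $a_n=0\iff x\mid\mathsf E(\mathcal{T}_{\bm a},x,y)$ requires showing $h(0,y)\not\equiv h(-1,y+1)$ for every $h=\mathsf E(\mathcal{T}_{\bm a'},x,y)$ with $\bm a'\in\{-1,0,1\}^{n-1}$; nothing in Theorem~\ref{thm:iechrombivar} obviously rules this identity out, and evaluating a bivariate chromatic polynomial at the negative argument $x=-1$ has no counting interpretation, so this needs an algebraic argument you have not supplied. Second, and more seriously, the injectivity of the negative operator $h\mapsto y\,h(x,y)+(x-y)\,h(x-1,y+1)$ from Proposition~\ref{prop:neguniv} genuinely fails on all of $\mathbb{Q}[x,y]$ (e.g.\ its restriction to constants and to suitable low-degree polynomials has nontrivial behaviour that your leading-coefficient trick cannot detect, as you note), so any proof must exploit that $h$ ranges only over $\mathsf E$-polynomials of signed threshold graphs --- and no such structural property is identified. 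Until both points are settled the proposal is a programme, not a proof, and the conjecture should be regarded as remaining open.
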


Notably, if non-isomorphic signed complete graphs correspond uniquely to its bivariate chromatic polynomials, then non-isomorphic graphs can be associated uniquely with the bivariate chromatic polynomials as well.

\begin{conj}
\label{conj:isomiffbivar}
    Let $\Sigma_1$ and $\Sigma_2$ be signed complete graphs.
    Then
    \begin{align*}
        \Sigma_1 \cong \Sigma_2 \iff \mathscr B(\Sigma_1,x,y) = \mathscr B(\Sigma_2,x,y) \iff \mathsf E(\Sigma_1,x,y) = \mathsf E(\Sigma_2,x,y).
    \end{align*}
\end{conj}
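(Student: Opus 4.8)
The three conditions are a chain in which only one link has content: $\Sigma_1 \cong \Sigma_2 \Rightarrow \mathscr B(\Sigma_1,x,y) = \mathscr B(\Sigma_2,x,y)$ is Proposition~\ref{prop:isombivarchrom}, and $\mathscr B(\Sigma_1,x,y) = \mathscr B(\Sigma_2,x,y) \Rightarrow \mathsf E(\Sigma_1,x,y) = \mathsf E(\Sigma_2,x,y)$ is immediate from $\mathscr B = (\mathsf E,\mathsf O)$. So the whole problem is to prove that for signed complete graphs $\mathsf E(\Sigma,x,y)$ determines $\Sigma$ up to isomorphism. The plan is to reduce this to an unsigned statement: a signed complete graph is determined up to isomorphism by the isomorphism type of its negative-edge graph $G := |\Sigma^-|$ (equivalently of $\overline G = |\Sigma^+|$), so it suffices to recover $G$ up to isomorphism from $\mathsf E(\Sigma,x,y)$. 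First one reads off the cheap invariants: by Corollary~\ref{cor:negedges}, $\mathsf E(\Sigma,x,y)$ determines $n := |V(G)|$ (its degree in $x$) and $\nu := |E(G)|$ (the coefficient of $x^{n-2}y$); by Lemma~\ref{lem:lambdauequal}, the substitution $y = x$ recovers $\chi(\overline G,x)$. The task is to extract enough further information from the $y$-dependence to pin down $G$.

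The natural next step is to exploit the explicit expansion: for the even constituent $\delta = 0$ in Theorem~\ref{thm:iechrombivar}, so
\[
\mathsf E(\Sigma,x,y) \;=\; \sum_{\substack{Y \subseteq E(|\Sigma|) \\ \Sigma|Y \text{ balanced}}} (-1)^{|Y|}\, x^{\,p(\Sigma|Y)}\,(x-y)^{\,c(\Sigma|Y)-p(\Sigma|Y)},
\]
and, since $|\Sigma|$ is complete, $Y$ is balanced exactly when it contains no negative cycle and a component of $\Sigma|Y$ is all-positive exactly when it uses no negative edge. Writing $N_{i,p,c}(\Sigma)$ for the number of such $Y$ with $|Y| = i$, $p(\Sigma|Y) = p$, $c(\Sigma|Y) = c$, and using that the polynomials $x^{p}(x-y)^{c-p}$ ($0 \le p \le c$, fixed $c$) form a basis of the homogeneous degree-$c$ part of $\mathbb{Z}[x,y]$, one sees that $\mathsf E(\Sigma,x,y)$ determines every alternating sum $\sum_i (-1)^i N_{i,p,c}(\Sigma)$. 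I would then try to invert this Whitney-rank-type data, by induction on the number of vertices and edges involved, into the counts, for each $k$, of $k$-subsets of $V$ inducing each fixed isomorphism type of signed $K_k$ — equivalently the induced-subgraph counts of $G$ for all small graphs — and conclude by a Kelly/Lovász-type argument that the full vector of induced-subgraph counts determines $G$. A cleaner alternative would be to establish a genuine deletion--contraction recursion for $\mathsf E(\Sigma,x,y)$ over an arbitrary edge (not only a dominating vertex, as in Propositions~\ref{prop:posuniv} and~\ref{prop:neguniv}), identify $\mathsf E$ with a specialisation of a Tutte-type polynomial of $\Sigma$, and argue that this specialisation is fine enough to be a complete isomorphism invariant on the complete-graph family.

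The main obstacle is precisely where chromatic-type polynomials usually fail: they are not complete isomorphism invariants in general — there are co-chromatic unsigned graphs — so $\mathsf E(\Sigma,x,x) = \chi(\overline G,x)$ by itself cannot separate such $G$, and everything depends on squeezing genuinely new content out of the second variable. Moreover, as remarked before the statement, the conjecture for signed complete graphs would imply that every unsigned graph is determined by the bivariate chromatic polynomial of its associated signed $K_n$, a conclusion at least as strong as the corresponding open problem for the Dohmen--P\"onitz--Tittmann polynomial; so a full proof may be beyond current techniques, and I would proceed in stages. First, push the computational verification well past the $n = 12$ checked for the threshold analogue, both to raise confidence and to hunt for a small counterexample. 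Second, prove the conjecture for structured subfamilies — when $G$ or $\overline G$ is a threshold graph (via Theorem~\ref{thm:signthresh}), a disjoint union of cliques, or otherwise chromatically unique — where the extra $y$-data provably recovers the signature. Third, at minimum show that $\mathsf E(\Sigma,x,y)$ strictly refines the switching-isomorphism classification, e.g.\ by recovering the switching class of $\Sigma$ (its associated two-graph) together with the multiset of negative degrees of the vertices, which already outstrips what $\mathscr C(\Sigma,x)$ can see.
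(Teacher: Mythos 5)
The statement you were asked to prove is a \emph{conjecture} in the paper: the authors do not prove it, and there is no proof for your attempt to be measured against. What the paper offers is essentially the partial evidence you describe in your final paragraph: Theorem~\ref{thm:signthresh} settles the case where $\Sigma_1$ and $\Sigma_2$ are signed complete graphs that are also signed threshold graphs (i.e.\ of the form $\mathcal{T}_{\bm{b}}$ with $\bm{b}\in\{\pm1\}^n$), and the authors report computational verification for all signed complete graphs on up to $7$ vertices. Your reduction to the single nontrivial implication (the forward implications being Proposition~\ref{prop:isombivarchrom} and the definition of $\mathscr B$ as the pair $(\mathsf E,\mathsf O)$), your identification of the crux as recovering $|\Sigma^-|$ up to isomorphism from $\mathsf E(\Sigma,x,y)$, and your staged programme are all consistent with the paper's actual state of knowledge.

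That said, judged as a proof, your sketch has a genuine gap, and you locate it yourself. Theorem~\ref{thm:iechrombivar} only hands you the alternating sums $\sum_i(-1)^i N_{i,p,c}(\Sigma)$, and inverting such Whitney-rank-type data into induced-subgraph counts is exactly the step at which chromatic-type invariants lose information: the ordinary chromatic polynomial is such an alternating sum and famously fails to determine the graph. Nothing in your argument explains why the extra statistic $p(\Sigma|Y)$ (the number of all-positive components) restores enough information to make the inversion possible; for all you have shown, two non-isomorphic signed complete graphs could agree on every sum $\sum_i(-1)^i N_{i,p,c}$. (Note also that once you have \emph{all} induced-subgraph counts the Kelly/Lov\'asz machinery is unnecessary, since the count of $G$ itself is among them; the entire difficulty sits in the inversion, not in that last step.) The cheap invariants you extract --- $n$ and $\nu$ from Corollary~\ref{cor:negedges}, and $\chi(|\Sigma^+|,x)$ from Lemma~\ref{lem:lambdauequal} via $y=x$ --- are correct but far from sufficient. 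Until the inversion (or a provably complete Tutte-type specialisation) is supplied, what you have is a research plan matching the paper's own open problem, not a proof.
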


If $\bm{b} \in \{\pm 1\}^n$ then $\mathcal{T}_{\bm{b}}$ is a signed complete graph.
By Theorem~\ref{thm:signthresh}, we obtain a partial result towards Conjecture~\ref{conj:isomiffbivar}: the conclusion of Conjecture~\ref{conj:isomiffbivar} is true if $\Sigma_1$ and $\Sigma_2$ are signed complete graphs that are also signed threshold graphs.
Additionally, we have computationally verified that Conjecture~\ref{conj:isomiffbivar} is true over all signed complete graphs on up to $7$ vertices.

Let $\Gamma_1^+$ be a graph such that $\Gamma_1^+ = \left\vert \Sigma_1^+ \right\vert$ for some signed complete graph $\Sigma_1$.
Similarly, let $\Gamma_2^+$ be a graph such that $\Gamma_2^+ = \left\vert \Sigma_2^+ \right\vert$ for some signed complete graph $\Sigma_2$.
If Conjecture~\ref{conj:isomiffbivar} is true, then $\Gamma_1^+$ and $\Gamma_2^+$ are isomorphic if and only if $\mathsf E(\Sigma_1,x,y) = \mathsf E(\Sigma_2,x,y)$.
Therefore, the even bivariate chromatic polynomial is a complete algebraic invariant of graphs, provided that Conjecture~\ref{conj:isomiffbivar} is true.

\section{Acknowledgement}

We are grateful to Thomas Zaslavsky for his comments on an earlier draft of this paper.

The first author was partially supported by the Singapore Ministry of Education Academic Research Fund; grant numbers: RG18/23 (Tier 1) and MOE-T2EP20222-0005 (Tier 2).

\bibliographystyle{amsplain}
\providecommand{\bysame}{\leavevmode\hbox to3em{\hrulefill}\thinspace}
\providecommand{\MR}{\relax\ifhmode\unskip\space\fi MR }
\providecommand{\MRhref}[2]{%
  \href{http://www.ams.org/mathscinet-getitem?mr=#1}{#2}
}
\providecommand{\href}[2]{#2}

\end{document}